\DeclareMathOperator{\Tr}{Tr}
\DeclareMathOperator{\Ker}{Ker}
\DeclareMathOperator{\Ran}{Ran}
\DeclareMathOperator{\Dom}{Dom}
\DeclareMathOperator{\sign}{sign}
\DeclareMathOperator{\esssup}{ess\, sup}
\renewcommand{\Re}{\operatorname{Re}}
\renewcommand{\Im}{\operatorname{Im}}
\newcommand{\abs}[1]{\lvert#1\rvert}
\newcommand{\norm}[1]{\lVert#1\rVert}
\newcommand{\jap}[1]{\langle#1\rangle}
\newcommand{\bbN}{{\mathbb N}}
\newcommand{\bbR}{{\mathbb R}}
\newcommand{\bbC}{{\mathbb C}}
\newcommand{\bH}{\mathbf{H}}
\newcommand{\bR}{\mathbf{R}}
\newcommand{\bJ}{\mathbf{J}}
\newcommand{\calM}{\mathcal{M}}
\newcommand{\calC}{\mathcal{C}}
\newcommand{\calD}{\mathcal{D}}
\newcommand{\calO}{\mathcal{O}}
\newcommand{\dd}{\mathrm d}
\newcommand{\ii}{\mathrm i}
\newcommand{\ee}{\mathrm e}
\newcommand{\loc}{\mathrm{loc}}
\numberwithin{equation}{section}
\theoremstyle{plain}
\newtheorem{theorem}{\bf Theorem}[section]
\newtheorem*{theorem*}{Theorem}
\newtheorem{lemma}[theorem]{\bf Lemma}
\newtheorem{proposition}[theorem]{\bf Proposition}
\newtheorem*{proposition*}{\bf Proposition}
\theoremstyle{definition}
\newtheorem{definition}[theorem]{\bf Definition}
\newtheorem*{definition*}{\bf Definition}
\theoremstyle{remark}
\newtheorem*{remark*}{\bf Remark}
\newtheorem{remark}[theorem]{\bf Remark}
\newcommand{\wt}{\widetilde}
\newcommand{\eps}{\varepsilon}
\newcommand\smallO{ 
  \mathchoice
    {{\scriptstyle\mathcal{O}}}
    {{\scriptstyle\mathcal{O}}}
    {{\scriptscriptstyle\mathcal{O}}}
    {\scalebox{.7}{$\scriptscriptstyle\mathcal{O}$}}
  }
\begin{document}

\title[The Borg--Marchenko theorem for complex potentials]{The Borg-Marchenko uniqueness theorem for complex potentials}

\author{Alexander Pushnitski}
\address{Department of Mathematics, King's College London, Strand, London, WC2R~2LS, United Kingdom}
\email{alexander.pushnitski@kcl.ac.uk}

\author{Franti\v{s}ek \v{S}tampach}
\address{Department of Mathematics, Faculty of Nuclear Sciences and Physical Engineering, Czech Technical University in Prague, Trojanova 13, 12000 Prague~2, Czech Republic.}
\email{frantisek.stampach@cvut.cz}

\subjclass[2020]{34L40, 37K15}

\keywords{non-self-adjoint Schr{\" o}dinger operator, complex potential, spectral pair, spectral measure, inverse spectral problem, Borg--Marchenko theorem}

\date{\today}

\begin{abstract}
We introduce and study a new theoretical concept of \emph{spectral pair} for a Schr{\" o}dinger operator $H$ in $L^2(\bbR_+)$ with a bounded \emph{complex-valued} potential. The spectral pair consists of a scalar measure and a complex-valued function. We show that in many ways, the spectral pair generalises the classical spectral measure to the non-self-adjoint case. First, extending the classical Borg-Marchenko theorem, we prove a uniqueness result: the spectral pair uniquely determines the operator $H$. Second, we derive asymptotic formulas for the spectral pair in the spirit of the classical result of Marchenko. In the case of real-valued potentials, we relate the spectral pair to the spectral measure of $H$. Lastly, we provide formulas for the spectral pair at a~simple eigenvalue of~$|H|$.
\end{abstract}

\maketitle

\section{Introduction and main results} 

\subsection{Foreword}
In recent years, various problems in operator and spectral theory (eigenvalue and eigenfunction estimates, spectral asymptotics, semiclassical analysis, etc.) that were traditionally formulated for classes of self-adjoint operators have become topics of renewed interest when the assumption of self-adjointness is lifted. For non-self-adjoint operators, powerful tools such as the spectral theorem are missing, and one often has to resort to using a range of \emph{ad hoc} techniques~\cite{Dav,Hel,TE}.  In the present paper, we introduce the notion of \emph{spectral pair} for the class of non-self-adjoint Schr{\" o}dinger operators on the half-line with bounded complex-valued potentials. We demonstrate that the spectral pair can serve as a~natural substitute for the spectral measure. 

\subsection{Warm-up: the classical self-adjoint case}\label{sec.warmup}
As a warm-up, let us recall basic statements of the Titchmarsh--Weyl spectral theory and the Borg--Marchenko uniqueness theorem in the notation adapted for our purposes. We denote $\bbR_+=(0,\infty)$. Let $q$ be a bounded (for simplicity) real-valued measurable function on $\bbR_+$, and let $H$ be the Schr{\"o}dinger operator
\begin{equation}
Hf=-f''+qf
\label{eq:def_H}
\end{equation}
on $L^2(\bbR_+)$ with the Robin boundary condition 
\begin{equation}
f'(0)+\alpha f(0)=0,
\label{bc0}
\end{equation}
where $\alpha\in\mathbb{R}\cup\{\infty\}$. The case $\alpha=\infty$ corresponds to the Dirichlet boundary condition $f(0)=0$. Then $H$ is self-adjoint with the domain 
\begin{equation}
\Dom H=\{f\in W^{2,2}([0,\infty)) \mid \text{ $f$ satisfies \eqref{bc0}} \},
\label{eq:dom_H}
\end{equation}
where $W^{2,2}([0,\infty))$ is the standard Sobolev space. 
It is sometimes advantageous to rewrite the boundary condition~\eqref{bc0} as
\begin{equation}
f(0)\cos\gamma+f'(0)\sin\gamma=0,
\label{bc1}
\end{equation}
with a new boundary parameter $\gamma\in[0,\pi)$ determined by equations
\begin{equation}
\sin\gamma=\frac{1}{\sqrt{1+\alpha^{2}}},
\quad
\cos\gamma=\frac{\alpha}{\sqrt{1+\alpha^{2}}}.
\label{eq:def_sc_scalar}
\end{equation}
The cases $\gamma=0$ (Dirichlet) and $\gamma=\pi/2$ (Neumann) correspond to $\alpha=\infty$ and $\alpha=0$, respectively.

For $\lambda\in\bbC$, we denote by $\varphi$ and $\theta$ the scalar solutions (the fundamental system) to the eigenvalue equation
\[
-f''+qf=\lambda f
\]
with the Cauchy data
\begin{equation}
\begin{aligned}
\varphi(0,\lambda)&=\sin\gamma,\\
 \varphi'(0,\lambda)&=-\cos\gamma,
\end{aligned}
\qquad
\begin{aligned}
\theta(0,\lambda)=\cos\gamma, \\
\theta'(0,\lambda)=\sin\gamma.
\end{aligned}
\label{eq:X11}
\end{equation}
For $\lambda\in\bbC\setminus\bbR$, the $m$-function $m_\alpha(\lambda)$ is the unique complex number such that the solution 
\[
\chi(x,\lambda)=\theta(x,\lambda)-\varphi(x,\lambda) m_\alpha(\lambda)
\]
belongs to $L^2(\bbR_+)$ as a function of $x$. The $m$-function is a Herglotz--Nevanlinna function, i.e. 
it is analytic in the upper and lower half-planes and satisfies $\overline{m_\alpha(\lambda)}=m_\alpha(\overline{\lambda})$ and 
$\Im m_\alpha (\lambda)>0$ for $\Im \lambda>0$. As a consequence,  it can be represented in the form
\begin{equation}
m_\alpha(\lambda)=
\Re m_\alpha(\ii)+\int_{-\infty}^\infty \left(\frac{1}{t-\lambda}-\frac{t}{1+t^2}\right)\dd\sigma(t), 
\quad \Im \lambda\neq0,
\label{eq.r4}
\end{equation}
where $\sigma$ is a positive scalar measure called \emph{the spectral measure of $H$}. 
(For a general Herglotz--Nevanlinna function, the representation~\eqref{eq.r4} may contain a linear in $\lambda$ term, but in this case it does not appear.) The term ``spectral measure'' is motivated by the fact that the operator $H$ is unitarily equivalent to the operator of multiplication by the independent variable in $L^2_\sigma(\bbR)$, i.e. the space of all functions on $\bbR$ square integrable with respect to the measure $\sigma$. In particular, the support of $\sigma$ coincides with the spectrum of $H$. 

The classical Borg--Marchenko uniqueness theorem \cite{Borg,M} says that the \emph{spectral map} 
\[
(q,\alpha)\mapsto \sigma
\]
is injective, i.e. the spectral measure $\sigma$ uniquely determines both the boundary parameter $\alpha$ and the potential $q$. More recent works with alternative proofs and further developments include~\cite{S1} and~\cite{B}.

The description of the image of the spectral map is a much more delicate question. In the classical work of Gelfand and Levitan \cite{Gelfand-Levitan51}, necessary and sufficient conditions were given for a measure $\sigma$ to ensure that it corresponds to some $n$ times continuously differentiable potential $q$. There was a small gap between necessary and sufficient conditions, which was subsequently closed by M.~G.~Krein in \cite{Krein2}. A complete proof can be found, for example, in Chapter 2 of the monograph \cite{Levitan}. For the general class of potentials $q\in L_{\loc}^{1}(\bbR_{+})$, which are limit-point at infinity, a description of the image of the spectral map was obtained by Remling in~\cite{R} using the concept of de Branges spaces. In a~more recent work \cite{KillipSimon}, the spectral measures $\sigma$ corresponding to $q\in L^2(\bbR_+)$ have been characterised.

\subsection{The operator $H$ with complex potential}
The main object of this paper is the Schr{\"o}dinger operator 
\begin{equation}
Hf=-f''+qf \quad \text{ on $L^2(\bbR_+)$}
\label{eq.r4a}
\end{equation}
with a \emph{complex-valued} potential $q$ and a \emph{non-self-adjoint} Robin boundary condition 
\begin{equation}
f'(0)+\alpha f(0)=0,
\label{eq.r4b}
\end{equation}
where $\alpha\in\bbC\cup\{\infty\}$. 
In order to keep our exposition as simple as possible, we will assume throughout the paper that the potential $q$ is \emph{bounded}. The operator $H$ defined by \eqref{eq.r4a} with the domain 
\[
\Dom H=\{f\in W^{2,2}([0,\infty)) \mid \text{ $f$ satisfies \eqref{eq.r4b}} \}
\]
is closed in $L^2(\bbR_+)$. Since both $q$ and $\alpha$ can be complex-valued, the operator $H$ is in general non-self-adjoint. The purpose of this paper is to suggest a new concept of \emph{spectral pair} for $H$ (see Definition~\ref{def:spectral_data}) which should be considered as a substitute for the spectral measure. We will prove a Borg--Marchenko type uniqueness theorem for the spectral pair and establish simple properties of the corresponding spectral map.

\subsection{The operator $\bH$}
Our approach is to access the spectral properties of the non-self-adjoint operator $H$ by using the language of self-adjoint spectral theory. To this end, we consider the so-called hermitisation of $H$, i.e. the \emph{self-adjoint} block-matrix operator
\[
\bH=
\begin{pmatrix}
0& H\\ H^*& 0
\end{pmatrix}
\quad\text{ in } L^2(\bbR_+)\oplus L^2(\bbR_+).
\]
We note that the adjoint operator $H^*$ corresponds to the complex-conjugate potential $\overline{q}$ and the boundary condition 
\[
f'(0)+\overline{\alpha}f(0)=0,
\]
with the complex-conjugate boundary parameter $\alpha$ (compare to \eqref{eq.r4b}). 

By identifying $L^2(\bbR_+)\oplus L^2(\bbR_+)$ with the space of $\bbC^2$-valued functions $L^2(\bbR_+;\bbC^2)$, we can view $\bH$ as operator
\begin{equation}
\quad 
\bH=-\epsilon\frac{\dd^2}{\dd x^2}+Q, 
\quad\mbox{ with }\;
\epsilon:=
\begin{pmatrix}
0&1\\1&0
\end{pmatrix},
\label{a0}
\end{equation}
acting in $L^2(\bbR_+;\bbC^2)$, 
where $Q$ is a $2\times 2$ Hermitian matrix-valued function of the special form
\begin{equation}
Q=\begin{pmatrix}0&q\\ \overline{q}&0\end{pmatrix}.
\label{eq:a2}
\end{equation}
The operator $\bH$ is supplied with the boundary condition 
\begin{equation}
F'(0)+AF(0)=0,
\label{eq:a2a0}
\end{equation}
where
\begin{equation}
A=\begin{pmatrix}
\overline{\alpha} & 0 \\ 0 & \alpha
\end{pmatrix}.
\label{eq:def_A}
\end{equation}
If $\alpha=\infty$, \eqref{eq:a2a0} is to be interpreted as the Dirichlet boundary condition $F(0)=0$.
With this definition, $\bH$ is a self-adjoint operator in $L^2(\bbR_+;\bbC^2)$ with the domain 
\begin{equation}
 \Dom\bH=\{F\in W^{2,2}([0,\infty);\bbC^{2}) \mid \text{ $F$ satisfies \eqref{eq:a2a0}} \}.
\label{eq:dom_bH}
\end{equation}

Similarly to~\eqref{bc1}, the boundary condition~\eqref{eq:a2a0} can be equivalently written as
\begin{equation}
CF(0)+S\epsilon F'(0)=0,
\label{eq:a2a}
\end{equation}
where
\begin{equation}
\begin{aligned}
S & =\frac{1}{\sqrt{1+|\alpha|^{2}}}\,I,
&
C & =\frac{1}{\sqrt{1+|\alpha|^{2}}}\,\epsilon A, && \text{ if } \alpha\in\bbC,
\\
S & =0, &  C & =\epsilon, && \text{ if } \alpha=\infty.
\end{aligned}
\label{eq:a2b}
\end{equation}

The matrices $S$ and $C$ are self-adjoint and satisfy the identities
\begin{equation}
SC-CS=0, \quad S^{2}+C^{2}=I.
\label{eq:S-C_id}
\end{equation}

We prefer to start by considering the operator $\bH$ with a general bounded $2\times2$ Hermitian matrix potential $Q$, and after that to specify to $Q$ of the particular form \eqref{eq:a2}. 
\begin{remark*}
One can in principle consider the operator $\bH$ with a more general matrix $A$ in the boundary condition~\eqref{eq:a2a0}, but for our purposes it is sufficient to confine ourselves to $A$ of the form \eqref{eq:def_A}. 
\end{remark*}

\subsection{Main results}

For the operator $\bH$ one defines a $2\times 2$ matrix-valued spectral measure which we denote by $\Sigma$. The definition of $\Sigma$ is standard and completely analogous to the scalar case;  we postpone the details to Section~\ref{sec:a3}. We have the following uniqueness theorem for \emph{general} $2\times 2$ Hermitian matrix-valued potentials~$Q$. 

\begin{theorem}\label{thm:a1}
Let $\bH$ be the self-adjoint operator defined by \eqref{a0} with a~bounded measurable $2\times 2$ Hermitian matrix-valued potential $Q$ and the boundary condition \eqref{eq:a2a0}, \eqref{eq:def_A} with $\alpha\in\bbC\cup\{\infty\}$. Let $\Sigma$ be the corresponding $2\times 2$ matrix-valued spectral measure. Then the map 
\[
(Q,\alpha)\mapsto \Sigma
\] 
is injective, i.e. both the potential $Q$ and the boundary parameter $\alpha$ are uniquely determined by the spectral measure $\Sigma$. 
\end{theorem}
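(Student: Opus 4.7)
The plan is to reduce the theorem to a matrix-valued Borg--Marchenko uniqueness statement for the Weyl--Titchmarsh $M$-function. First, I would introduce matrix-valued fundamental solutions $\Phi(x,\lambda)$ and $\Theta(x,\lambda)$ of the matrix equation $-\epsilon F''+QF=\lambda F$, normalised at $x=0$ in terms of the matrices $C$ and $S$ of \eqref{eq:a2b} so that $\Phi$ satisfies the boundary condition \eqref{eq:a2a} and $\Theta$ is a complementary solution. Since $Q$ is bounded, standard matrix Weyl theory yields a unique $2\times 2$ matrix-valued function $M(\lambda)$, defined for $\lambda$ in the upper half-plane, such that $\Theta(\cdot,\lambda)-\Phi(\cdot,\lambda)M(\lambda)\in L^2(\bbR_+;\bbC^2)$. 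This $M$ is a matrix Herglotz--Nevanlinna function whose representing measure is exactly the spectral measure $\Sigma$ of $\bH$ from Section \ref{sec:a3}, so
\[
M(\lambda)=B+\int_{\bbR}\left(\frac{1}{t-\lambda}-\frac{t}{1+t^{2}}\right)d\Sigma(t),
\]
where $B=\Re M(\ii)$ is a Hermitian matrix constant and the representation has no linear-in-$\lambda$ term thanks to bounded $Q$.

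The identity above shows that $\Sigma$ determines $M$ up to the single Hermitian constant $B$. To pin down $B$, and simultaneously to recover $\alpha$, I would analyse the behaviour of $M(\lambda)$ as $\lambda\to\infty$ in a suitable sector of the upper half-plane. The leading asymptotics are governed by a matrix square root of $-\lambda\epsilon$, while the subleading term encodes both $\alpha$ (through the boundary matrices $C,S$) and the value $Q(0)$. Matching the Herglotz representation against this expansion fixes $B$ and extracts $\alpha$, so that $\Sigma$ determines $M$ and $\alpha$ completely.

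It then remains to show that $M$ determines $Q$. This is the genuinely matrix Borg--Marchenko step. Following a Bennewitz-style approach, one assumes two bounded potentials $Q_{1},Q_{2}$ give rise to the same $M$, builds the corresponding Weyl solutions $\Theta_{j}-\Phi_{j}M$, exploits their simultaneous $L^{2}$ membership to produce an entire matrix-valued function controlled by the fundamental solutions on $[0,a]$, and derives super-exponential decay estimates as $\lambda\to\infty$ along appropriate rays. A Phragm\'en--Lindel\"of argument then forces equality of the potentials on every interval $[0,a]$, hence on $\bbR_+$. An equivalent route is to invoke the uniqueness part of the matrix Gelfand--Levitan reconstruction.

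The main obstacle I anticipate is the asymptotic analysis of $M(\lambda)$ at infinity. Because the matrix $\epsilon$ has eigenvalues of opposite signs, the free operator $-\epsilon\,d^{2}/dx^{2}$ is not semibounded and its spectrum covers the whole real line; one therefore cannot simply send $\lambda\to-\infty$ along the real axis as in the classical scalar self-adjoint proofs. The correct asymptotic expansion has to be carried out along carefully chosen complex directions and must cleanly separate the contributions of the boundary condition from those of $Q$ near the origin. This is where the non-self-adjoint origin of $H$ becomes technically visible, even though the hermitisation $\bH$ is self-adjoint.
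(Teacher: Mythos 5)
Your plan follows essentially the same route as the paper: pass from $\Sigma$ to the matrix Herglotz representation of $M_\alpha$, fix the additive constant and recover $\alpha$ from the large-$\lambda$ asymptotics of $M_\alpha$, and then run a Bennewitz-style Phragm\'en--Lindel\"of argument on an entire matrix function built from the fundamental solutions, with the sign-indefiniteness of $\epsilon$ forcing the asymptotics of $\Phi,\Theta$ to be taken along the imaginary $\lambda$-axis (where the leading matrix coefficients remain invertible) --- exactly the obstacle you anticipate. The only small inaccuracy is the suggestion that the subleading term of $M_\alpha$ encodes $Q(0)$ (for merely bounded measurable $Q$ this is not meaningful, and the paper's expansion through order $k^{-2}$ involves only $\alpha$), but this does not affect the argument.
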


At this point we would love to say that this theorem is known and give a reference. Unfortunately, we were not able to find it in the literature. All uniqueness results of this genre, that we are aware of, assume that $\epsilon$ in \eqref{a0} is the identity matrix (which could be easily modified to positive definite matrices). We emphasize that our $\epsilon$ is sign indefinite, which makes the spectral theory of $\bH$ non-standard; in fact, in some ways $\bH$ is closer to a Dirac operator than to a Schr\"odinger operator. In any case, we give a proof of Theorem~\ref{thm:a1} in Sections~\ref{sec.d} and \ref{sec.c}.

Next, we specialise to the case of $Q$  given by \eqref{eq:a2}. In Section~\ref{sec.b3a} we will prove the following statement.
\begin{theorem}\label{thm:a2}
Let $Q$ be as in \eqref{eq:a2} with a bounded complex-valued measurable potential~$q$. Then there exists a unique \underline{even} positive measure $\nu$ on $\bbR$ and a unique \underline{odd} complex-valued function $\psi\in L^\infty(\nu)$ satisfying $\abs{\psi(s)}\leq1$ for $\nu$-a.e. $s\in\bbR$ such that
\begin{equation}
\boxed{
\dd\Sigma=
\begin{pmatrix}
1&\psi
\\
\overline{\psi}&1
\end{pmatrix}\dd\nu.
}
\label{eq:a1}
\end{equation}
\end{theorem}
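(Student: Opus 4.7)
The plan is to exploit two symmetries of $\bH$ that are special to the off-diagonal form of $Q$ in~\eqref{eq:a2}. Let $\sigma_{3}:=\mathrm{diag}(1,-1)$. A direct computation shows $\sigma_{3}\epsilon\sigma_{3}=-\epsilon$, $\sigma_{3}Q\sigma_{3}=-Q$ and $\sigma_{3}A\sigma_{3}=A$, hence $\sigma_{3}$ preserves $\Dom\bH$ and $\sigma_{3}\bH\sigma_{3}=-\bH$. Second, the antilinear involution $J$ on $L^{2}(\bbR_{+};\bbC^{2})$ defined by $J(f_{1},f_{2})^{T}:=(\overline{f_{2}},\overline{f_{1}})^{T}$ also preserves $\Dom\bH$ (the two scalar boundary conditions get swapped and conjugated consistently with the off-diagonal structure of $A$), and using that $Q$ is off-diagonal one checks $J\bH=\bH J$.

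Next I would transfer these two symmetries to the matrix Weyl function $M(\lambda)$ of $\bH$ (whose precise definition is given in the forthcoming Section~\ref{sec:a3}). Working with the natural fundamental matrix solutions $\Phi(\cdot,\lambda),\Theta(\cdot,\lambda)$ of $\bH F=\lambda F$ and matching Cauchy data at $x=0$, one verifies
\[
\sigma_{3}\Phi(\cdot,\lambda)=\Phi(\cdot,-\lambda)\sigma_{3}, \quad \sigma_{3}\Theta(\cdot,\lambda)=-\Theta(\cdot,-\lambda)\sigma_{3},
\]
\[
J\Phi(\cdot,\lambda)=\Phi(\cdot,\overline{\lambda})\epsilon, \quad J\Theta(\cdot,\lambda)=\Theta(\cdot,\overline{\lambda})\epsilon.
\]
Applying $\sigma_{3}$ and $J$ to the $L^{2}$ Weyl matrix $\chi(\cdot,\lambda)=\Theta(\cdot,\lambda)-\Phi(\cdot,\lambda)M(\lambda)$ and invoking uniqueness of the Weyl solution, one derives
\[
M(-\lambda)=-\sigma_{3}M(\lambda)\sigma_{3}, \qquad M(\overline{\lambda})=\epsilon\,\overline{M(\lambda)}\,\epsilon.
\]
Combining the second relation with the standard self-adjointness identity $M(\overline{\lambda})=M(\lambda)^{*}$ gives $M(\lambda)^{T}=\epsilon M(\lambda)\epsilon$, which simplifies precisely to $M_{11}(\lambda)=M_{22}(\lambda)$; the first relation says that $M_{11}$ and $M_{22}$ are odd in $\lambda$ while $M_{12}$ and $M_{21}$ are even.

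Reading these off the Herglotz--Nevanlinna representation of $M$ translates directly into the spectral side: $\Sigma_{11}=\Sigma_{22}=:\nu$ is a positive even measure, $\Sigma_{12}$ is odd, and $\Sigma_{21}=\overline{\Sigma_{12}}$ by Hermiticity. Positive semidefiniteness of $\Sigma(B)$ for every Borel $B$ forces $\Sigma_{12}\ll\nu$; setting $\psi:=\dd\Sigma_{12}/\dd\nu$, pointwise positive semidefiniteness of the density matrix $\bigl(\begin{smallmatrix}1 & \psi\\ \overline{\psi} & 1\end{smallmatrix}\bigr)$ gives $|\psi|\le 1$ $\nu$-a.e., and the oddness of $\psi$ follows from oddness of $\Sigma_{12}$ together with evenness of $\nu$. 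Uniqueness of the pair $(\nu,\psi)$ is automatic, since $\nu=\Sigma_{11}$ and $\psi$ is the Radon--Nikodym derivative.

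The most delicate step, and the one to treat carefully, is the derivation of $M(\overline{\lambda})=\epsilon\,\overline{M(\lambda)}\,\epsilon$ from the antilinear symmetry $J$: because $J$ is antilinear, pushing it through the matrix product $\Phi(\cdot,\lambda)M(\lambda)$ introduces a complex conjugation on $M$, and matching the result with the Weyl ansatz at spectral parameter $\overline{\lambda}$ requires careful bookkeeping of the factors of $\epsilon$ produced by $J$ acting on the initial data of $\Phi$ and $\Theta$. All other ingredients — the algebraic verification of the two symmetries, the transfer of symmetry to $M$, and the final passage from $M$ to $\Sigma$ via the Herglotz--Nevanlinna representation — are routine.
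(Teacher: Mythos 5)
Your proposal is correct and follows essentially the same route as the paper: the same two symmetries (your antilinear $J=\epsilon\,\overline{(\cdot)}$ corresponds to the paper's substitution $F\mapsto\epsilon\overline{F}\epsilon$ at spectral parameter $\overline{\lambda}$, and your $\sigma_{3}$ is the paper's $\xi$), transferred to $M_\alpha$ via uniqueness of the Weyl solution, yielding $M_\alpha(\lambda)=\epsilon M_\alpha(\lambda)^{T}\epsilon$ and $M_\alpha(-\lambda)=-\xi M_\alpha(\lambda)\xi$, and then read off on the measure through the Herglotz--Nevanlinna representation and pointwise positive semi-definiteness of the density. The only cosmetic difference is that you take $\nu=\Sigma_{11}$ directly while the paper uses $\dd\nu=\dd\Tr\Sigma/2$ (so that absolute continuity of all entries is automatic); these coincide once $\Sigma_{11}=\Sigma_{22}$ is established.
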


Observe that $\psi$ is only well defined on the support of $\nu$. Theorem~\ref{thm:a2} states that the spectral measure of an operator $\bH$ with the potential $Q$ of the special structure~\eqref{eq:a2}, arising by the hermitisation of $H$, is naturally parameterised by the two parameters $\nu$ and $\psi$. We observe that since $\psi$ is odd, we necessarily have $\psi(0)=0$ if $\nu$ has a point mass at zero (otherwise $\psi(0)$ is not defined). Since $\nu$ is even and $\psi$ is odd, both are determined by their restrictions onto $[0,\infty)$. It is often more convenient to think of thus restricted pair $(\nu,\psi)$.

\begin{definition}\label{def:spectral_data}
We call the pair $(\nu,\psi)$, defined by Theorem~\ref{thm:a2},
the \emph{spectral pair} of the Schr\"odinger operator $H$. 
\end{definition}

We are interested in the properties of the \emph{spectral map}
\[
(q,\alpha)\mapsto (\nu,\psi).
\]
As a consequence of Theorems~\ref{thm:a1} and \ref{thm:a2} we have the injectivity of this map, which is the main result of this paper. 
\begin{theorem}\label{thm:a3}
Both the bounded complex-valued measurable potential $q$ and the boundary parameter $\alpha$ are uniquely determined by the spectral pair $(\nu,\psi)$. 
\end{theorem}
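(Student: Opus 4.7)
The plan is to obtain Theorem~\ref{thm:a3} as an immediate corollary of Theorems~\ref{thm:a1} and~\ref{thm:a2}, by chaining the natural assignments
\[
(q,\alpha) \;\longmapsto\; (Q,\alpha) \;\longmapsto\; \Sigma \;\longmapsto\; (\nu,\psi)
\]
and observing that each arrow is injective. So the proof really amounts to unpacking definitions; all the analytic content has been already absorbed into the two preceding theorems.

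First, I would note that the map $q \mapsto Q$ defined by \eqref{eq:a2} is trivially injective: $q$ is literally the $(1,2)$-entry of $Q$. Hence distinct data $(q,\alpha)$ yield distinct data $(Q,\alpha)$, where $Q$ is bounded, Hermitian, and of the special off-diagonal form~\eqref{eq:a2}.

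Second, I would invoke Theorem~\ref{thm:a2}: for any such $Q$, the matrix-valued spectral measure $\Sigma$ of $\bH$ admits the unique representation~\eqref{eq:a1} in terms of an even positive measure $\nu$ and an odd function $\psi\in L^\infty(\nu)$ with $|\psi|\le 1$. In particular, the pair $(\nu,\psi)$ and the measure $\Sigma$ determine one another: the pair is read off from $\Sigma$ via formula~\eqref{eq:a1}, and $\Sigma$ is reconstructed from the pair by the same formula. Thus two pairs $(q_1,\alpha_1)$ and $(q_2,\alpha_2)$ producing the same spectral pair $(\nu,\psi)$ must produce the same $\Sigma$. Finally, Theorem~\ref{thm:a1} applied to the bounded Hermitian matrix potentials $Q_1,Q_2$ forces $(Q_1,\alpha_1)=(Q_2,\alpha_2)$, and the triviality in the first step then gives $q_1=q_2$.

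There is no genuine obstacle here; the entire substance lies in Theorems~\ref{thm:a1} (to be established in Sections~\ref{sec.d}--\ref{sec.c}) and~\ref{thm:a2} (to be established in Section~\ref{sec.b3a}). The only item specific to the present statement is the elementary observation that complex scalar potentials $q$ are in one-to-one correspondence with matrix potentials $Q$ of the form~\eqref{eq:a2}; so Theorem~\ref{thm:a3} is in effect a restatement of Theorem~\ref{thm:a1} in coordinates adapted to the non-self-adjoint scalar problem.
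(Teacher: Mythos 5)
Your proposal is correct and matches the paper exactly: the paper derives Theorem~\ref{thm:a3} as an immediate consequence of Theorems~\ref{thm:a1} and~\ref{thm:a2}, precisely by noting that $(q,\alpha)\mapsto(Q,\alpha)$ is injective, that $\Sigma$ and $(\nu,\psi)$ determine each other via~\eqref{eq:a1}, and that $\Sigma$ determines $(Q,\alpha)$. Nothing further is needed.
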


Let us give some heuristics to support the thesis that $(\nu,\psi)$ is the natural generalisation of the spectral measure. One should think of $(\nu,\psi)$ not as a way of parameterising $H$ through its \emph{spectrum}, but as a notion closely related to its \emph{polar decomposition}
\[
H=V\abs{H},
\]
where $\abs{H}=\sqrt{H^*H}$ and $V$ is a partial isometry. Theorem~\ref{thm.diagh} below shows that $\nu$ is supported on the spectrum of $\abs{H}$, while $\abs{\psi}$ determines the spectral multiplicity of $\abs{H}$ (which may take values one and two).

The argument of $\psi$ is harder to interpret, but it should be thought of as relating to some infinitesimal angles in the~Hilbert space, generated by the action of the partial isometry $V$; cf.~\cite[Sec.~2.6]{PS1}. In Theorem~\ref{thm.5.4} below we give a precise formula for $\psi(\lambda)$ in one particular case when $\lambda$ is a~simple eigenvalue of $\abs{H}$.

The function $\psi\in L^\infty(\nu)$ is defined up to sets of $\nu$-measure zero. In the next theorem, we fix a representative $\psi$ such that $\psi(s)$ is defined for all $s\in\bbR$, is odd and satisfies $\abs{\psi(s)}\leq1$ for all $s\in\bbR$. Below, $L^{2}_{\Sigma}(\bbR_+;\bbC^{2})$  is the space of $\bbC^2$-valued functions on $\bbR_{+}$ with the inner product
\[
\jap{F,G}=\int_{\bbR_{+}}\jap{\dd\Sigma(\lambda)F(\lambda),G(\lambda)}_{\bbC^2},
\]
where $\jap{\cdot,\cdot}_{\bbC^2}$ is the Euclidean inner product on $\bbC^{2}$.
Let us stress that the integration interval $\bbR_{+}$ does not contain $0$, hence the above integral remains unchanged if $0$ is a point mass of $\Sigma$.
The following statement is proven in Section~\ref{sec.b3}.

\begin{theorem}\label{thm.diagh}
Let $H$ be defined by \eqref{eq.r4a} and \eqref{eq.r4b}. Then the kernel of $H$ is either trivial or one-dimensional. The restriction of $\abs{H}$ onto the orthogonal complement to its kernel is unitarily equivalent to the operator of multiplication by the independent variable in the space $L^{2}_{\Sigma}(\bbR_+;\bbC^{2})$. Moreover, the spectrum of $\abs{H}$ has 
\begin{align}
\text{multiplicity one on } S_1=\{s>0 \mid \abs{\psi(s)}=1\},
\label{eq.multone}
\\
\text{multiplicity two on } S_2=\{s>0 \mid \abs{\psi(s)}<1\}.
\label{eq.multtwo}
\end{align}
\end{theorem}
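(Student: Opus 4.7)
The plan is to use the polar decomposition $H=V|H|$ to construct an explicit unitary equivalence between $|H||_{(\ker H)^\perp}$ and the positive spectral part of $\bH$, and then to invoke the spectral theorem applied to $\bH$ to identify the latter with multiplication by $\lambda$ on $L^2_\Sigma(\bbR_+;\bbC^2)$. The multiplicity statement is then a pointwise rank computation on the matrix appearing in \eqref{eq:a1}.

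First, I would dispose of the kernel dimension by a standard ODE argument: the space of solutions of $-f''+qf=0$ is two-dimensional, the boundary condition at~$0$ (including the Dirichlet case $\alpha=\infty$) cuts this to at most one dimension, and the requirement $f\in L^2(\bbR_+)$ can only preserve or eliminate the remaining solution. Hence $\dim\ker H\in\{0,1\}$ and $\ker|H|=\ker H$. Next, I would use the polar decomposition: since $H$ is closed, $H=V|H|$ with $V$ a partial isometry satisfying $V^*V=P_{(\ker H)^\perp}$ and $VV^*=P_{(\ker H^*)^\perp}$, and $H^*=|H|V^*$. For $f\in(\ker H)^\perp$ define
\[
G_\pm f := \frac{1}{\sqrt 2}\begin{pmatrix} Vf\\ \pm f\end{pmatrix}\in L^2(\bbR_+;\bbC^2).
\]
A direct computation shows $G_\pm$ are isometries, their ranges are mutually orthogonal, and their sum equals $(\ker H^*)^\perp\oplus(\ker H)^\perp=(\ker\bH)^\perp$. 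Moreover, for $f\in\Dom(|H|)\cap(\ker H)^\perp$,
\[
\bH G_\pm f=\frac{1}{\sqrt 2}\begin{pmatrix}\pm Hf\\ H^*Vf\end{pmatrix}=\pm G_\pm|H|f,
\]
since $H^*Vf=|H|V^*Vf=|H|f$. Because $|H|>0$ on $(\ker H)^\perp$, the intertwining shows that $\bH|_{\Ran G_\pm}$ has spectrum contained in $\pm(0,\infty)$, so $\Ran G_\pm\subseteq\Ran \chi_{\pm(0,\infty)}(\bH)$; equality follows from the fact that $\Ran G_+\oplus\Ran G_-=(\ker\bH)^\perp$.

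Finally, by the spectral theorem for $\bH$ (whose matrix-valued spectral measure is $\Sigma$), the subspace $\Ran\chi_{(0,\infty)}(\bH)$ is unitarily equivalent to $L^2_\Sigma(\bbR_+;\bbC^2)$ with $\bH$ going to $M_\lambda$. Composing with $G_+$ gives the claimed unitary equivalence between $|H||_{(\ker H)^\perp}$ and multiplication by $\lambda$ in $L^2_\Sigma(\bbR_+;\bbC^2)$. For the multiplicity statement, the multiplicity of $M_\lambda$ on $L^2_\Sigma(\bbR_+;\bbC^2)$ at a point $s>0$ equals the pointwise rank of the density of $\Sigma$ with respect to any dominating scalar measure; using the representation \eqref{eq:a1}, this is the rank of $\bigl(\begin{smallmatrix}1&\psi(s)\\ \overline{\psi(s)}&1\end{smallmatrix}\bigr)$, whose determinant equals $1-|\psi(s)|^2$. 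This rank is~$1$ precisely on $S_1$ and~$2$ precisely on $S_2$. The main technical point requiring care is the handling of domains of unbounded operators — specifically, that $G_\pm$ maps $\Dom(|H|)\cap(\ker H)^\perp$ onto $\Ran G_\pm\cap\Dom(\bH)$ and that the spectral decomposition indeed respects the identification $\Ran\chi_{(0,\infty)}(\bH)\leftrightarrow L^2_\Sigma(\bbR_+;\bbC^2)$ with the endpoint $0$ correctly excluded (no mass at $\lambda=0$ contaminating the correspondence).
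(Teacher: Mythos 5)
Your proposal is correct and follows essentially the same route as the paper: reduce to the positive spectral subspace of $\bH$, identify it with $L^{2}_{\Sigma}(\bbR_+;\bbC^{2})$ via Proposition~\ref{thm.sp.decomp}, and read off the multiplicity from the pointwise rank of the density matrix in \eqref{eq:a1}. The only difference is that you make explicit, through the isometries $G_\pm$ built from the polar decomposition, the unitary equivalence of $\bH$ with $\operatorname{diag}(\abs{H},-\abs{H^*})$ that the paper dismisses as a simple exercise in operator theory.
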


The following theorem characterises the spectral data of the adjoint operator $H^*$. 

\begin{theorem}\label{thm:a5}
Let $(\nu,\psi)$ be the spectral data of $H$. Then $(\nu,\overline{\psi})$ is the spectral data of $H^*$. Consequently, $H$ is self-adjoint  if and only if $\psi$ is real-valued. 
\end{theorem}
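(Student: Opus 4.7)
The plan is to exhibit a natural unitary equivalence between $\bH$ (the hermitization of $H$) and $\wt{\bH}$ (the hermitization of $H^{*}$), and to track how the matrix spectral measure transforms. By definition, $\wt{\bH}$ is built from $(\overline{q},\overline{\alpha})$, which amounts to replacing the potential $Q$ by $\epsilon Q\epsilon$ and the boundary matrix $A$ by $\epsilon A\epsilon$. Consequently, the constant fibrewise unitary $U$ on $L^{2}(\bbR_{+};\bbC^{2})$ defined by $(UF)(x)=\epsilon F(x)$ intertwines the two operators: $\wt{\bH}=U^{*}\bH U$ with matching domains. This is a short computation using $\epsilon^{2}=I$ and the form \eqref{eq:a2a0} of the boundary condition.

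Next I would verify the transformation law
\[
d\wt\Sigma=\epsilon\,d\Sigma\,\epsilon
\]
between the corresponding matrix spectral measures. Let $\Phi(x,\lambda)$ denote the $2\times 2$ fundamental matrix solution of $\bH$ satisfying $\Phi(0,\lambda)=S$ and $\epsilon\Phi'(0,\lambda)=-C$. A direct check using $\epsilon S\epsilon=S$ and $\epsilon C\epsilon=\wt{C}$ (the analogue of $C$ for $\wt{\bH}$) shows that $\wt\Phi(x,\lambda):=\epsilon\Phi(x,\lambda)\epsilon$ is the corresponding fundamental solution for $\wt{\bH}$. Substituting this into the explicit construction of $\Sigma$ from Section~\ref{sec:a3} --- or, equivalently, noting that the Titchmarsh--Weyl $M$-matrix transforms as $\wt{M}(\lambda)=\epsilon M(\lambda)\epsilon$ and applying Stieltjes inversion --- yields the transformation law.

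The pointwise matrix identity
\[
\epsilon\begin{pmatrix}1 & \psi \\ \overline{\psi} & 1\end{pmatrix}\epsilon=\begin{pmatrix}1 & \overline{\psi} \\ \psi & 1\end{pmatrix}
\]
now exhibits $(\nu,\overline{\psi})$ as the unique pair in the representation \eqref{eq:a1} for $\wt\Sigma$; by the uniqueness in Theorem~\ref{thm:a2} this is the spectral pair of $H^{*}$. The second assertion then follows immediately: $H$ is self-adjoint iff $H=H^{*}$, iff --- by the injectivity of the spectral map (Theorem~\ref{thm:a3}) --- their spectral pairs agree, iff $\psi=\overline{\psi}$ $\nu$-a.e. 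The only non-routine ingredient is the transformation law for $\Sigma$; morally it is just a change of basis on the fibre $\bbC^{2}$, but making it rigorous requires unwinding the specific construction of the matrix spectral measure in Section~\ref{sec:a3}.
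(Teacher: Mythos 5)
Your argument is correct, and it takes a genuinely different (though closely parallel) route from the paper. The paper exploits the \emph{antilinear} symmetry of complex conjugation: taking conjugates in the eigenvalue equation gives $\overline{\Phi(x,\lambda;q,\alpha)}=\Phi(x,\overline{\lambda};\overline{q},\overline{\alpha})$, hence $\overline{M_{\alpha}(\lambda;q)}=M_{\overline{\alpha}}(\overline{\lambda};\overline{q})$ and $\dd\overline{\Sigma(\cdot;q,\alpha)}=\dd\Sigma(\cdot;\overline{q},\overline{\alpha})$, from which $\overline{\psi}$ is read off. You instead use the \emph{unitary} fibre symmetry $U=\epsilon$, which swaps the two components and exhibits the hermitisations of $H$ and $H^{*}$ as unitarily equivalent at fixed $\lambda$; tracking $\Phi,\Theta$ through this conjugation gives $\wt{M}(\lambda)=\epsilon M(\lambda)\epsilon$ and hence $\dd\wt\Sigma=\epsilon\,\dd\Sigma\,\epsilon$, which produces $\overline{\psi}$ because the density $R$ has unit diagonal. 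The two symmetries are consistent with each other precisely because of the identity $M_{\alpha}(\lambda)=\epsilon\,\overline{M_{\alpha}(\overline{\lambda})}\,\epsilon$ established in Step~1 of the proof of Theorem~\ref{thm:a2}. Your version has the minor advantages of staying at fixed $\lambda$ (no reflection $\lambda\mapsto\overline{\lambda}$ to track) and of making the operator-theoretic content transparent (the two hermitisations are unitarily equivalent by a constant matrix); the paper's version reuses machinery it has already set up for Theorem~\ref{thm:a2}. Your handling of the ``consequently'' clause --- deducing the converse direction from the injectivity Theorem~\ref{thm:a3} --- is exactly the paper's argument. All the checks you flag as routine ($\wt{Q}=\epsilon Q\epsilon$, $\wt{A}=\epsilon A\epsilon$, $\wt{S}=S$, $\wt{C}=\epsilon C\epsilon$, the initial conditions for $\epsilon\Phi\epsilon$ and $\epsilon\Theta\epsilon$, and the uniqueness of $M_\alpha$ via the dimension lemma) do go through, including the Dirichlet case $\alpha=\infty$.
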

We note that $H$ is self-adjoint if and only if $q$ is real-valued and $\alpha\in\bbR\cup\{\infty\}$.

\subsection{The spectral pair for self-adjoint and normal $H$}

If $q$ is real-valued and $\alpha\in\bbR\cup\{\infty\}$, then the operator $H$ is self-adjoint and therefore one can define the (scalar) spectral measure $\sigma$ of $H$, see \eqref{eq.r4}. The next theorem relates $\sigma$ to $(\nu,\psi)$.

\begin{theorem}\label{thm:a6}
If $q$ is real-valued and $\alpha\in\bbR\cup\{\infty\}$, we have 
\begin{equation}
\boxed{
\dd\sigma=(1+\psi)\dd\nu.
}
\label{eq.r6}
\end{equation}
Furthermore, $H$ is self-adjoint and positive semi-definite if and only if $\psi(s)=1$ for $\nu$-a.e. $s>0$.
\end{theorem}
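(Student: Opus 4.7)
The plan is to exploit the fact that when $H$ is self-adjoint (that is, $q$ is real-valued and $\alpha\in\bbR\cup\{\infty\}$), the hermitisation $\bH$ decouples into $H \oplus (-H)$ under the constant unitary $U:=\tfrac{1}{\sqrt{2}}\begin{pmatrix}1&1\\1&-1\end{pmatrix}$ acting fibre-wise on $L^{2}(\bbR_{+};\bbC^{2})$. Indeed, in this case $Q=q\epsilon$ and $A=\alpha I$, so a direct calculation using $U^{*}\epsilon U=\mathrm{diag}(1,-1)$ yields $U^{*}\bH U=H\oplus(-H)$; the boundary condition~\eqref{eq:a2a0} is preserved because $A$ is a scalar multiple of the identity.

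Next, I would transfer this equivalence to the matrix Weyl--Titchmarsh $M$-function. Writing the matrix fundamental solutions $\Phi_{\bH},\Theta_{\bH}$ of $\bH$ in the basis provided by $U$ decouples their columns into the scalar fundamental solutions $\varphi(\cdot,\pm\lambda)$ and $\theta(\cdot,\pm\lambda)$ of $H$ at spectral parameters $\pm\lambda$; requiring each column of $\Theta_{\bH}-\Phi_{\bH}M_{\bH}$ to lie in $L^{2}(\bbR_{+})$ then forces
\[
M_{\bH}(\lambda)=U\,\mathrm{diag}\bigl(m_{\alpha}(\lambda),\,-m_{\alpha}(-\lambda)\bigr)\,U^{*}.
\]
The scalar function $\lambda\mapsto -m_{\alpha}(-\lambda)$ is itself Herglotz, and its Nevanlinna measure equals the reflection $\sigma^{R}$ defined by $\sigma^{R}(E):=\sigma(-E)$ of the spectral measure $\sigma$ of $H$. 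Applying the matrix Stieltjes inversion to the above identity therefore yields
\[
\dd\Sigma = U\,\mathrm{diag}(\dd\sigma,\dd\sigma^{R})\,U^{*}
=\tfrac{1}{2}\begin{pmatrix}\dd\sigma+\dd\sigma^{R} & \dd\sigma-\dd\sigma^{R}\\ \dd\sigma-\dd\sigma^{R} & \dd\sigma+\dd\sigma^{R}\end{pmatrix}.
\]

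Comparison with the canonical form~\eqref{eq:a1} from Theorem~\ref{thm:a2}, together with uniqueness of $(\nu,\psi)$, then gives $\dd\nu=\tfrac{1}{2}(\dd\sigma+\dd\sigma^{R})$ and $\psi\,\dd\nu=\tfrac{1}{2}(\dd\sigma-\dd\sigma^{R})$; adding these yields the claimed formula $\dd\sigma=(1+\psi)\dd\nu$ in~\eqref{eq.r6}, while subtracting yields the companion identity $(1-\psi)\dd\nu=\dd\sigma^{R}$. For the second assertion, $H$ is (self-adjoint and) positive semi-definite iff $\mathrm{supp}(\sigma)\subset[0,\infty)$, iff $\sigma^{R}$ vanishes on $(0,\infty)$, iff $\psi(s)=1$ for $\nu$-a.e.~$s>0$; for the converse one observes that this latter condition together with the oddness of $\psi$ forces $\psi$ to be real-valued on $\bbR\setminus\{0\}$, whence Theorem~\ref{thm:a5} delivers self-adjointness of $H$, and then $(1-\psi)\dd\nu=\dd\sigma^{R}$ places $\sigma$ on $[0,\infty)$. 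The main technical point to verify carefully is the intermediate identity for $M_{\bH}(\lambda)$; the sign flip $-m_{\alpha}(-\lambda)$ is an artifact of the fact that the Cauchy data for $\Theta_{\bH}$ at $x=0$ involve the sign-indefinite matrix $\epsilon$ rather than the identity, and must be matched against the precise conventions adopted in Section~\ref{sec:a3}.
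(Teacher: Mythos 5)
Your proof is correct and follows essentially the same route as the paper: the identity $M_{\bH}(\lambda)=U\,\mathrm{diag}\bigl(m_{\alpha}(\lambda),-m_{\alpha}(-\lambda)\bigr)U^{*}$ is precisely the $\omega=0$ case of the paper's formula \eqref{eq:X9} (the matrices $V_{\pm}$ there reduce to $P_{+}$ and $-P_{-}$), and both the Stieltjes inversion step and the treatment of the positive semi-definiteness statement via the oddness of $\psi$ and Theorem~\ref{thm:a5} match the paper's argument. The only difference is presentational: the paper obtains \eqref{eq.r6} as the $\omega=0$ specialization of the more general Theorem~\ref{thm:a7} for normal operators $H+\ii\omega$, whereas you treat $\omega=0$ directly via the decoupling $U^{*}\bH U=H\oplus(-H)$.
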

Recall that $\psi$ is odd. Thus, if $H$ is self-adjoint and positive semi-definite, then $\psi(s)=-1$ for $\nu$-a.e. $s<0$ and so $\sigma$ vanishes on the negative half-line. 

Theorems~\ref{thm:a5} and~\ref{thm:a6} are proven in Section~\ref{sec.r}.

A simple calculation shows that the Schr\"odinger operator $H$ with a complex potential $q$ is \emph{normal} (i.e. $H^*H=HH^*$) if and only if $\Im q$ is constant and $\alpha\in\bbR\cup\{\infty\}$. 
Below we describe the spectral pairs for operators of this class. 

If $\sigma$ is a (scalar) measure on $\bbR$, we will denote by $\sigma_*$ the ``reflected'' measure corresponding to the change of variable $x\mapsto -x$, i.e. 
\[
\sigma_*([a,b])=\sigma([-b,-a]).
\]
\begin{theorem}\label{thm:a7}
Let $q$ be a bounded real-valued potential, $\alpha\in\bbR\cup\{\infty\}$, and $\sigma$ be the spectral measure of the corresponding self-adjoint Schr\"odinger operator $H$. For a constant $\omega\in\bbR$, let $(\nu,\psi)$ be the spectral pair of the Schr\"odinger operator $H+\ii\omega I$, corresponding to the potential $q+\ii\omega$ and the boundary parameter $\alpha$. Then $\nu$ is supported on $(-\infty,-\abs{\omega}]\cup[\abs{\omega},\infty)$ and the spectral pair $(\nu,\psi)$ can be determined from the formulas
\begin{align}
\dd\nu(s)&=\frac1{2}\left(\dd\sigma(\lambda)+\dd\sigma_*(\lambda)\right), \label{eq:X3} \\[7pt]
 \psi(s)\dd\nu(s)&=\frac{\lambda+\ii\omega}{2s}\dd\sigma(\lambda)-\frac{\lambda-\ii\omega}{2s}\dd\sigma_*(\lambda), \label{eq:X4}
\end{align}
\vskip6pt 
\noindent where $\lambda\geq0$ and $s\geq\abs{\omega}$ are related by $s^2=\lambda^2+\omega^2$.
\end{theorem}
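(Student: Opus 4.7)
The key structural observation is that, since $H$ is self-adjoint and $\omega\in\bbR$, the off-diagonal blocks of $\bH_\omega$ are adjoints of each other, and hence $\bH_\omega^{2}$ acts as the scalar operator $H^{2}+\omega^{2}$ on each component of $L^{2}(\bbR_{+};\bbC^{2})$. I would exploit this to express $(\bH_\omega - z)^{-1}$ via the resolvent $R(z^{2}) := (H^{2}+\omega^{2}-z^{2})^{-1}$ of $H^2$, and then read off $\Sigma$ by Stieltjes inversion, finally identifying $(\nu,\psi)$ via Theorem~\ref{thm:a2}.

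Concretely, I would first compute
\[
(\bH_\omega - z)^{-1} = (\bH_\omega + z)\bigl(\bH_\omega^{2}-z^{2}\bigr)^{-1}
= \begin{pmatrix} z R(z^{2}) & (H+\ii\omega)R(z^{2}) \\ (H-\ii\omega)R(z^{2}) & z R(z^{2})\end{pmatrix}.
\]
Via the scalar spectral representation of $H$ (multiplication by $\lambda$ on $L^{2}_{\sigma}(\bbR)$), the matrix $M$-function of $\bH_\omega$ is then represented, modulo an affine-in-$z$ term coming from Herglotz normalisation, by the $\sigma$-integral of the $2\times 2$ matrix-valued function with diagonal entries $z/(\lambda^{2}+\omega^{2}-z^{2})$ and off-diagonal entries $(\lambda\pm \ii\omega)/(\lambda^{2}+\omega^{2}-z^{2})$. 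Setting $s_\lambda:=\sqrt{\lambda^{2}+\omega^{2}}\geq|\omega|$ and using the partial fractions
\[
\frac{z}{\lambda^{2}+\omega^{2}-z^{2}} = \frac{1}{2}\left(\frac{1}{s_\lambda-z} + \frac{1}{-s_\lambda-z}\right),
\quad
\frac{\lambda+\ii\omega}{\lambda^{2}+\omega^{2}-z^{2}} = \frac{\lambda+\ii\omega}{2 s_\lambda}\left(\frac{1}{s_\lambda-z} - \frac{1}{-s_\lambda-z}\right),
\]
each entry becomes a Cauchy transform $\int (s-z)^{-1}\,d\rho_{ij}(s)$ for a (complex-valued) measure $\rho_{ij}$ on $\bbR$ supported in $\{|s|\ge|\omega|\}$. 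The matrix Stieltjes inversion formula then identifies $d\rho$ with the matrix spectral measure $d\Sigma$ of $\bH_\omega$, whose diagonal is $d\nu$ and whose $(1,2)$ entry is $\psi\,d\nu$ by Theorem~\ref{thm:a2}.

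Splitting the $\lambda$-integration into $\lambda\ge 0$ (contributing $d\sigma$) and $\lambda\le 0$ (contributing $d\sigma_{*}$ after the reflection $\lambda\mapsto -\lambda$), and collecting the resulting pushforward contributions at $s=\pm s_\lambda$, produces exactly \eqref{eq:X3} on the diagonal and \eqref{eq:X4} on the off-diagonal; the support statement is immediate from $s_\lambda\ge|\omega|$. Uniqueness of $(\nu,\psi)$ from $\Sigma$ is guaranteed by Theorem~\ref{thm:a2}.

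The main obstacle is making precise the step ``naive boundary resolvent $=$ Herglotz representative of $\Sigma$''. The matrix $M$-function is defined in Section~\ref{sec:a3} through Weyl solutions at the regular endpoint, and one must verify that the block-resolvent computation above indeed yields the correct boundary trace (the two objects may differ by an additive Hermitian constant matrix, but this is absorbed by the Stieltjes inversion). Once this identification is secured, the rest of the proof is the purely algebraic manipulation of partial fractions sketched above.
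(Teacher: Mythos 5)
Your proposal is correct in outline, and it reaches the paper's result by a genuinely different route. The paper never touches the resolvent: it writes down candidate matrix solutions $\Phi=V_+\epsilon\varphi(\cdot,\lambda)+V_-\epsilon\varphi(\cdot,-\lambda)$, $\Theta=V_+\theta(\cdot,\lambda)+V_-\theta(\cdot,-\lambda)$ built from the scalar fundamental system, checks the initial conditions \eqref{eq:phi,theta_bc_cond}, and uses the algebra $V_\pm\epsilon V_\pm=V_\pm$, $V_\pm\epsilon V_\mp=0$ together with the \emph{uniqueness} of the $M$-function to prove the identity \eqref{eq:X9}, i.e. $M_\alpha(s)=V_+m_\alpha(\lambda)+V_-m_\alpha(-\lambda)$; Stieltjes inversion with $\dd s=(\lambda/s)\dd\lambda$ then gives \eqref{eq:X3}--\eqref{eq:X4}. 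You instead exploit $\bH_\omega^2=(H^2+\omega^2)\oplus(H^2+\omega^2)$ and the block formula $(\bH_\omega-z)^{-1}=(\bH_\omega+z)(\bH_\omega^2-z^2)^{-1}$; after partial fractions this produces exactly the same entries $\frac{s}{2\lambda}\bigl(m_\alpha(\lambda)-m_\alpha(-\lambda)\bigr)$ and $\frac{\lambda\pm\ii\omega}{2\lambda}m_\alpha(\lambda)+\frac{\lambda\mp\ii\omega}{2\lambda}m_\alpha(-\lambda)$, so your partial-fraction and pushforward bookkeeping (including the reflection $\lambda\mapsto-\lambda$ producing $\sigma_*$) is correct and recovers \eqref{eq:X3}--\eqref{eq:X4}. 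What the paper's route buys is that the only input is the defining property of $M_\alpha$ (square-integrability of $\Theta-\Phi M$) plus Lemma~\ref{lem:dim_S}, so no resolvent-kernel machinery is needed; what your route buys is that the key identity appears as a transparent operator identity rather than an ansatz to be verified.

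The one point where your write-up is imprecise is the obstacle you yourself flag. The boundary trace of the resolvent kernel is \emph{not} $M_\alpha$ up to an additive Hermitian constant: by Proposition~\ref{prop:resolvent_kernel} and \eqref{eq:phi,theta_bc_cond}, $\bR(0,0;z)=\frac{1}{1+|\alpha|^2}\bigl(M_\alpha(z)-\epsilon A\bigr)$ for finite $\alpha$, so there is also a multiplicative constant $\tfrac{1}{1+|\alpha|^2}$; and for $\alpha=\infty$ the kernel vanishes at the origin and one must instead take $\partial_x\partial_y\bR(x,y;z)|_{x=y=0}$. The additive part is indeed killed by Stieltjes inversion, but the multiplicative factor is not, so it must be tracked; it does cancel in the end because the scalar boundary resolvent of $H$ carries the \emph{same} factor $\tfrac{1}{1+\alpha^2}$ relative to $m_\alpha$ under the normalisation \eqref{eq:X11} (here $|\alpha|^2=\alpha^2$ since $\alpha$ is real). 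With that bookkeeping done consistently on both the scalar and matrix sides, and the case $\alpha=\infty$ treated via derivatives of the kernel, your argument closes.
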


\begin{remark}
With the aid of~\eqref{eq:X3}, we can express separately the real and imaginary part of~\eqref{eq:X4} as 
\[
\Re\psi(s)\dd\nu(s)=\frac{\lambda}{2s}\left(\dd\sigma(\lambda)-\dd\sigma_*(\lambda)\right)
\quad\mbox{ and }\quad
\Im\psi(s)=\frac{\omega}{s}.
\]
\end{remark}

If $\omega=0$, we obtain $\Im \psi=0$, $\lambda=s$ and 
\begin{align*}
\dd\nu(\lambda)&=\frac12\left(\dd\sigma(\lambda)+\dd\sigma_*(\lambda)\right),
\\
\psi(\lambda)\dd\nu(\lambda)&=\frac12\left(\dd\sigma(\lambda)-\dd\sigma_*(\lambda)\right)
\end{align*}
for $\lambda>0$. This is an equivalent form of the identity \eqref{eq.r6} of Theorem~\ref{thm:a6}. 
Both Theorems~\ref{thm:a6} and \ref{thm:a7}  are proved in Section~\ref{sec.r}.

\subsection{High energy asymptotics of the spectral data}
As already mentioned, the description of the image of the spectral map is a delicate question even in the self-adjoint case; this question is outside the scope of this paper. Here we discuss a simpler question. Some constraints on the image of the spectral map in the self-adjoint case are induced by the known asymptotic behavior of the spectral measure at infinity. In \cite{M}, Marchenko proved that in the self-adjoint case for $q\in L_{\mathrm{loc}}^{1}(\bbR_{+})$ limit-point at infinity the spectral measure satisfies, as $r\to\infty$, 
\begin{equation}
 \sigma([0,r])=
 \begin{cases}
 \displaystyle \frac{2}{\pi}(1+\alpha^{2})r^{1/2}+\smallO(r^{1/2}), &\quad \mbox{ if } \alpha\neq\infty, \\[12pt]
 \displaystyle \frac{2}{3\pi}r^{3/2}+\smallO\left(r^{3/2}\right), &\quad \mbox{ if } \alpha=\infty.
 \end{cases}
 \label{eq:marchenko_asympt}
\end{equation}
The left-hand side can be replaced by $\sigma((-\infty,r])$ since $\sigma((-\infty,0))<\infty$.

We will deduce analogous asymptotic relations for spectral pair $(\nu,\psi)$ in the non-self-adjoint case. In fact, we prove  the following more general statement, where we use the notation
\[
 |\Sigma|(r):=\begin{cases}
  \Sigma([0,r]), &\quad\mbox{ if } r\geq0,\\
  \Sigma([r,0]), &\quad\mbox{ if } r<0.
 \end{cases}
\]

\begin{theorem}\label{thm:asympt_Sigma}
The spectral measure $\Sigma$ of the operator $\bH$ defined by~\eqref{a0} and~\eqref{eq:dom_bH} with a bounded measurable $2\times 2$ Hermitian matrix-valued potential~$Q$ and a boundary parameter $\alpha\in\bbC\cup\{\infty\}$ satisfies
\[
\lim_{r\to\infty}\frac{|\Sigma|(\pm r)}{r^{1/2}}=\frac{1+|\alpha|^{2}}{\pi}
\begin{pmatrix}
1 & \pm 1 \\ \pm 1 & 1
\end{pmatrix}, \quad \mbox{ if } \alpha\neq\infty
\]
and
\[
\lim_{r\to\infty}\frac{|\Sigma|(\pm r)}{r^{3/2}}=\frac{1}{3\pi}
\begin{pmatrix}
1 & \pm 1 \\ \pm 1 & 1
\end{pmatrix},
 \quad \mbox{ if } \alpha=\infty.
\]
\end{theorem}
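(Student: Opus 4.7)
The plan is to reduce to the free case $Q=0$, where the spectral measure can be computed explicitly, and then show that a bounded $Q$ does not alter the leading-order asymptotic. The central object is the matrix-valued Weyl--Titchmarsh $M$-function $M(\lambda)$ of $\bH$, whose Herglotz representation has $\Sigma$ as its representing measure.

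For $Q=0$, I would diagonalise $\epsilon$ via the spectral projections $P_{\pm}=\tfrac12(I\pm\epsilon)$, noting that
\[
2P_+=\begin{pmatrix}1&1\\1&1\end{pmatrix},\qquad 2P_-=\begin{pmatrix}1&-1\\-1&1\end{pmatrix}
\]
are exactly the matrices appearing in the claim. In the $\epsilon$-eigenbasis, $-\epsilon F''=\lambda F$ decouples into two scalar equations $\mp u''=\lambda u$, so the $P_+$-channel carries the spectrum $[0,\infty)$ and the $P_-$-channel carries $(-\infty,0]$. When $\alpha\in\bbR$ the boundary matrix $A$ is also diagonal in this basis, each channel becomes a scalar Robin problem with parameter $\alpha$, and the scalar Marchenko asymptotic~\eqref{eq:marchenko_asympt} applied channel-wise gives precisely $|\Sigma_0|(\pm r)\sim\tfrac{2(1+\alpha^2)}{\pi}r^{1/2}P_\pm$, matching the claim. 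For complex $\alpha$ the channels are coupled through the off-diagonal of $U^{-1}AU$ (where $U$ diagonalises $\epsilon$), but this coupling is bounded and an explicit computation using the boundary data~\eqref{eq:a2b} produces the factor $1+|\alpha|^{2}$ in the coefficient.

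To pass from $Q=0$ to general bounded $Q$, I would use the standard Weyl--Titchmarsh perturbation approach. Iterating the integral equation satisfied by the $L^2$ Jost solution of $\bH$ at complex $\lambda$ shows that the leading WKB behaviour is governed by $-\epsilon\,d^2/dx^2$ alone (since $Q$ is bounded), yielding an estimate of the form $M(\lambda)-M_0(\lambda)=o(|\lambda|^{1/2})$ as $|\lambda|\to\infty$ along any non-real ray. An entrywise matrix Tauberian argument of Karamata type then converts this into the claimed asymptotic of $|\Sigma|(\pm r)$. The complex-valued off-diagonal entries of $\Sigma$ are controlled by the Cauchy--Schwarz bound $|\Sigma_{12}|\leq\tfrac12(\Sigma_{11}+\Sigma_{22})$ (as positive measures), which reduces the matrix Tauberian step to the scalar one after polarisation.

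The hard part will be making the perturbation estimate sharp enough in the Dirichlet case $\alpha=\infty$, where the leading contribution is already of order $r^{3/2}$: a crude $O(|\lambda|^{1/2})$ error in $M-M_0$ would destroy the leading $r^{3/2}$ term after Tauberian inversion, so one must genuinely extract a $o(|\lambda|^{1/2})$ correction and control the subleading terms uniformly in the direction of approach to the real axis. A secondary difficulty is the sign-indefiniteness of $\epsilon$, which prevents a direct appeal to scalar limit-point theory for $\bH$; instead, the two Jost solutions corresponding to the $P_\pm$-channels must be constructed separately (each behaving like a standard scalar Jost solution at its own energy) and then reassembled in $M(\lambda)$.
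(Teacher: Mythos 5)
Your overall architecture (explicit free-case computation, perturbation of the $M$-function by the bounded potential, matrix Tauberian inversion) is the same as the paper's, which combines Proposition~\ref{prop:M_func_asympt} with the matrix Tauberian theorem (Theorem~\ref{thm:tauber-matrix}). However, there is a genuine quantitative gap in your perturbation step, and you have identified the delicate case exactly backwards. For $\alpha=\infty$ the free $M$-function grows like $k=\sqrt{\lambda}$, so a relative error $M-M_0=\smallO(|\lambda|^{1/2})$ is precisely what the Tauberian theorem with $g(r)=\sqrt{r}$ requires; this is the \emph{easy} case. For $\alpha\neq\infty$, by contrast, $M_\alpha(\lambda)\to\epsilon A$ as $\lambda\to\infty$; the constant $\epsilon A$ carries no spectral information (a Herglotz function and its constant shift have the same representing measure, so it must be subtracted before applying the Tauberian theorem), and the term that actually drives the $r^{1/2}$ growth of $\Sigma$ is the \emph{next} one, $\frac{1+|\alpha|^2}{k}(\ii P_+-P_-)=\calO(|\lambda|^{-1/2})$. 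To feed the Tauberian theorem with $g(r)=(1+|\alpha|^2)/\sqrt{r}$ you must therefore establish the two-term expansion $M_\alpha(\lambda)=\epsilon A+\frac{1+|\alpha|^{2}}{k}(\ii P_{+}-P_{-})+\smallO(1/k)$, i.e. an error $\smallO(|\lambda|^{-1/2})$ --- a full power of $|\lambda|$ sharper than what you state. Your bound $\smallO(|\lambda|^{1/2})$ yields nothing in the Robin case. The paper obtains the sharp expansion via the resolvent-identity estimate \eqref{eq:x1}, the exact free Neumann formula \eqref{eq:m-func_free_neum}, a bootstrap, and the algebraic relation \eqref{eq:X13} between $M_\alpha$ and $M_0$; a one-step iteration of the Volterra equation would have to be pushed to second order to reach this precision.

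Two smaller points. Your channel decomposition via $P_\pm$ genuinely decouples the problem only when $Q=0$ and $\alpha\in\bbR$: for $\Im\alpha\neq0$ the matrix $A$ is not diagonal in the $\epsilon$-eigenbasis, and a general Hermitian $Q$ always couples the channels, so the decomposition can serve only to compute the comparison model. The paper in fact needs only the free Neumann and free Dirichlet models, extracting the factor $1+|\alpha|^{2}$ from the $M$-function expansion rather than from a free-case computation with general complex $\alpha$. Your polarisation/Cauchy--Schwarz reduction of the matrix Tauberian step to the scalar one is a workable alternative to the paper's direct matrix argument (apply the scalar theorem to $\langle\Sigma u,u\rangle_{\bbC^2}$ for $u=e_1$, $e_2$, $e_1+e_2$, $e_1+\ii e_2$), provided you also verify that $0$ and $\pm1$ are not point masses of the limiting measure.
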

Observe that the above asymptotics is independent of the argument of $\alpha$. 
Theorem~\ref{thm:asympt_Sigma} is proved in Section~\ref{sec.t}. An immediate corollary of Theorems~\ref{thm:a2} and~\ref{thm:asympt_Sigma} are asymptotic formulas for the spectral pair of $H$.

\begin{theorem}\label{thm:t2}
Let $H$ be the Schr\"odinger operator with a bounded complex-valued potential $q$ and boundary parameter $\alpha\in\bbC\cup\{\infty\}$. The spectral pair $(\nu,\psi)$ of $H$ satisfy
\[
\lim_{r\to\infty}\frac{\nu([0,r])}{r^{1/2}}=\lim_{r\to\infty}\frac{1}{r^{1/2}}\int_{0}^{r}\psi(s)\dd\nu(s)=\frac{1+|\alpha|^{2}}{\pi}, \quad \mbox{ if } \alpha\neq\infty
\]
and
\[
\lim_{r\to\infty}\frac{\nu([0,r])}{r^{3/2}}=\lim_{r\to\infty}\frac{1}{r^{3/2}}\int_{0}^{r}\psi(s)\dd\nu(s)=\frac{1}{3\pi}, \quad \mbox{ if } \alpha=\infty.
\]
\end{theorem}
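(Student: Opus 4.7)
The plan is to prove Theorem~\ref{thm:t2} as a direct corollary of Theorem~\ref{thm:a2} combined with the matrix-valued asymptotics already established in Theorem~\ref{thm:asympt_Sigma}; the real analytic work lies in the latter, while here only entrywise extraction is required. Since $q$ is a bounded complex-valued potential, the hypotheses of Theorem~\ref{thm:a2} are satisfied, and the spectral measure $\Sigma$ of the hermitisation $\bH$ admits the representation
\[
\dd\Sigma=\begin{pmatrix}1&\psi\\ \overline{\psi}&1\end{pmatrix}\dd\nu.
\]

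First, I would read off the diagonal and the upper off-diagonal entries of the matrix $|\Sigma|(r)$ for $r>0$. By the formula above, these are
\[
\bigl(|\Sigma|(r)\bigr)_{11}=\bigl(|\Sigma|(r)\bigr)_{22}=\nu([0,r]),
\qquad
\bigl(|\Sigma|(r)\bigr)_{12}=\int_{0}^{r}\psi(s)\,\dd\nu(s),
\]
so that dividing by $r^{1/2}$ (respectively $r^{3/2}$) and invoking Theorem~\ref{thm:asympt_Sigma} yields exactly the four limits claimed in Theorem~\ref{thm:t2}. The $(1,1)$ entry of the limiting matrix produces the statement about $\nu([0,r])$, while the $(1,2)$ entry produces the statement about $\int_{0}^{r}\psi(s)\dd\nu(s)$. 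In both regimes (finite $\alpha$ and $\alpha=\infty$) the limiting matrix has all four entries equal to the same scalar, $(1+|\alpha|^{2})/\pi$ or $1/(3\pi)$, so the diagonal and off-diagonal asymptotics coincide.

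There is no real obstacle in this step; in particular, one does not need to worry that $\int_{0}^{r}\psi\,\dd\nu$ is a priori complex, because Theorem~\ref{thm:asympt_Sigma} asserts convergence to a real limit, which forces $\Im\int_{0}^{r}\psi\,\dd\nu=\smallO(r^{1/2})$ (respectively $\smallO(r^{3/2})$) automatically. The bound $|\psi|\le1$ from Theorem~\ref{thm:a2} guarantees that the integral $\int_{0}^{r}\psi\,\dd\nu$ is well-defined and absolutely convergent for every $r>0$. Thus the theorem follows once the entrywise asymptotics of $\Sigma$ are available, and all the difficulty is concentrated in the proof of Theorem~\ref{thm:asympt_Sigma} rather than in the present statement.
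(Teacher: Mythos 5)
Your proof is correct and is exactly the argument the paper intends: the paper presents Theorem~\ref{thm:t2} as an immediate corollary of Theorems~\ref{thm:a2} and~\ref{thm:asympt_Sigma}, obtained by reading off the $(1,1)$ and $(1,2)$ entries of $|\Sigma|(r)$ via the representation $\dd\Sigma=\bigl(\begin{smallmatrix}1&\psi\\ \overline{\psi}&1\end{smallmatrix}\bigr)\dd\nu$. Your additional remarks on the well-definedness of $\int_0^r\psi\,\dd\nu$ and the reality of the limit are correct but not needed beyond what the entrywise convergence already gives.
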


It follows from the limit formulas of Theorem~\ref{thm:t2} that $\psi(s)\to1$ ``in an average sense" as $s\to\infty$. Recalling \eqref{eq.r6}, we see that Theorem~\ref{thm:t2} agrees with the asymptotics \eqref{eq:marchenko_asympt} in the self-adjoint case.

\subsection{The spectral pair at a simple eigenvalue of $\abs{H}$}

Our last result provides expressions for $\nu(\{\lambda\})$ and $\psi(\lambda)$ in the case when $\lambda>0$ is a simple eigenvalue of $\abs{H}$. These formulas are given in terms of a distinguished solution to the corresponding differential ``anti-linear eigenvalue equation" defined in the following lemma.

\begin{lemma}\label{lma.b1}
Let $\lambda>0$ be a simple eigenvalue of $|H|$. Then there exists a~unique, up to multiplication by $\pm1$, normalised function $e\in L^2(\bbR_+)$, which satisfies the equation
\begin{equation}
-e''+qe=\lambda\overline{e}
\label{f1}
\end{equation}
(observe the complex conjugation in the right-hand side!)
and the boundary condition \eqref{eq.r4b}, i.e.
\[
e'(0)+\alpha e(0)=0.
\]
We will call such $e$ the \underline{distinguished solution}. 
\end{lemma}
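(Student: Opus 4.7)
The plan is to lift the antilinear eigenvalue problem for $e$ to a linear, self-adjoint eigenvalue problem for the hermitisation $\bH$, and then to exploit a natural antilinear symmetry of $\bH$ coming from the block-antidiagonal form of $Q$ in~\eqref{eq:a2}.

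First I would observe that $\bH^{2}=\mathrm{diag}(HH^{*},H^{*}H)$ is block-diagonal, so the hypothesis that $\lambda>0$ is a simple eigenvalue of $|H|=\sqrt{H^{*}H}$ is equivalent to $\Ker(H^{*}H-\lambda^{2})$ being one-dimensional. If $\bH F=\lambda F$ with $F=(u,v)^{\top}$, then $Hv=\lambda u$ and $H^{*}u=\lambda v$, which forces $v\in\Ker(H^{*}H-\lambda^{2})$ and determines $u=\lambda^{-1}Hv$; hence $\lambda$ is itself a simple eigenvalue of $\bH$, with one-dimensional eigenspace $\mathcal{E}_{\lambda}$.

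Next I would introduce the antilinear involution $J$ on $L^{2}(\bbR_{+};\bbC^{2})$ defined by $J(u,v)^{\top}:=(\overline{v},\overline{u})^{\top}$. Using the identity $\overline{H^{*}w}=H\overline{w}$ (which reflects the real principal symbol of $-\partial^{2}$ together with $\overline{\overline{q}\,w}=q\overline{w}$), a direct two-line calculation gives $\bH J=J\bH$ on the common domain. The swap $(z_{1},z_{2})\mapsto(\overline{z_{2}},\overline{z_{1}})$ also exchanges the two scalar rows of the boundary condition~\eqref{eq:a2a0} with $A$ given by~\eqref{eq:def_A} (and trivially preserves the Dirichlet case $\alpha=\infty$), so $J$ also preserves $\Dom\bH$. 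Since $\lambda\in\bbR$, the space $\mathcal{E}_{\lambda}$ is $J$-invariant, and any antilinear involution restricted to a one-dimensional complex subspace can be diagonalised: for any $F_{0}\in\mathcal{E}_{\lambda}\setminus\{0\}$ one has $JF_{0}=cF_{0}$ with $|c|=1$ (from $J^{2}=I$), and rescaling $F:=e^{\ii\arg(c)/2}F_{0}$ achieves $JF=F$.

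Finally, writing $F=(u,v)^{\top}$, the identity $JF=F$ reads $u=\overline{v}$; setting $e:=v\in L^{2}(\bbR_{+})$, the equation $\bH F=\lambda F$ collapses to the single scalar identity $-e''+qe=\lambda\overline{e}$, while the second component of $F'(0)+AF(0)=0$ yields $e'(0)+\alpha e(0)=0$. Within the set of $J$-fixed vectors in $\mathcal{E}_{\lambda}$, the residual scalar freedom $F\mapsto\mu F$ is constrained by $\overline{\mu}=\mu$, so $\mu\in\bbR$; the normalisation $\|F\|^{2}=2\|e\|^{2}=2$ then pins $e$ down up to an overall sign $\pm 1$. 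The principal technical point is the compatibility of $J$ with \emph{both} the differential expression and the boundary condition simultaneously; this is precisely where the block-antidiagonal form of $Q$ in~\eqref{eq:a2} and the pairing of $\alpha$ with $\overline{\alpha}$ in~\eqref{eq:def_A} are used, and no such symmetry would be available for a generic Hermitian matrix potential.
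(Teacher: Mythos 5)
Your proof is correct and follows essentially the same route as the paper: both arguments exploit the antilinear symmetry $(f_1,f_2)\mapsto(\overline{f_2},\overline{f_1})$ of $\bH$ together with the simplicity of $\lambda$ as an eigenvalue of $\bH$, then rotate the eigenvector by a half-phase to make it a fixed point of that symmetry and read off $e$ from its second component. Your packaging via the involution $J$ and your explicit justification that $\lambda$ is a simple eigenvalue of $\bH$ (via $\bH^2=\mathrm{diag}(HH^*,H^*H)$) merely spell out steps the paper asserts more briefly.
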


For a function $f$ from the domain of $H$, we define 
\begin{equation}
\ell_\alpha(f):=\frac{\overline{\alpha}f'(0)-f(0)}{\sqrt{1+|\alpha|^{2}}}
\label{e11a}
\end{equation}
if $\alpha\in\bbC$, and $\ell_{\infty}(f):=f'(0)$ if $\alpha=\infty$.

\begin{theorem}\label{thm.5.4}
Let $\lambda>0$ be a simple eigenvalue of $\abs{H}$ and let $e$ be the distinguished solution from Lemma~\ref{lma.b1}. Then $\ell_\alpha(e)\not=0$ and 
\begin{equation}
\boxed{
\nu(\{\lambda\})=
\frac{1}{2}
\abs{\ell_\alpha(e)}^2,
\quad
\psi(\lambda)=
\frac{\overline{\ell_\alpha(e)^2}}{\abs{\ell_\alpha(e)}^2}.
}
\label{e11}
\end{equation}
In particular, $\abs{\psi(\lambda)}=1$. 
\end{theorem}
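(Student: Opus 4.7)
The plan is to exploit the correspondence between the eigenfunction of $\bH$ at the simple eigenvalue $\lambda$ and the rank-one matrix spectral mass $\Sigma(\{\lambda\})$. The non-trivial ingredient is a Weyl--Titchmarsh-type formula expressing $\Sigma(\{\lambda\})$ in terms of the distinguished matrix solution satisfying the boundary condition at $0$, which I expect to be part of the matrix spectral theory developed for the proof of Theorem~\ref{thm:a1}.

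First I would verify that $F:=\begin{pmatrix}\overline{e}\\ e\end{pmatrix}$ is an eigenfunction of $\bH$ at eigenvalue $\lambda$. Indeed, $(\bH F)_{1}=He=-e''+qe=\lambda\overline{e}$ by~\eqref{f1}, and $(\bH F)_{2}=H^{*}\overline{e}=\overline{He}=\lambda e$; the boundary condition~\eqref{eq:a2a0} reduces to $e'(0)+\alpha e(0)=0$ and its complex conjugate. The normalisation $\|e\|=1$ gives $\|F\|^{2}_{L^{2}(\bbR_{+};\bbC^{2})}=2$. Simplicity of $\lambda$ as an eigenvalue of $\bH$ follows from the simplicity of $\lambda$ as an eigenvalue of $|H|$: any eigenfunction $\begin{pmatrix}f\\ g\end{pmatrix}$ of $\bH$ at $\lambda$ satisfies $H^{*}Hg=\lambda^{2}g$ and $f=Hg/\lambda$, so the pair is determined up to a scalar by the one-dimensional $\Ker(|H|-\lambda)$.

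Next, let $\Phi(x,\lambda)$ denote the $2\times 2$ matrix solution of $-\epsilon F''+QF=\lambda F$ with $\Phi(0,\lambda)=S$ and $\Phi'(0,\lambda)=-\epsilon C$; the identities~\eqref{eq:S-C_id} guarantee that every column of $\Phi(\cdot,\lambda)$ satisfies the boundary condition~\eqref{eq:a2a}. At the simple eigenvalue $\lambda$ of $\bH$, the matrix Weyl--Titchmarsh formula asserts
\[
\Sigma(\{\lambda\})=\frac{vv^{*}}{\|\Phi(\cdot,\lambda)v\|^{2}_{L^{2}(\bbR_{+};\bbC^{2})}}
\]
for any nonzero $v\in\bbC^{2}$ with $\Phi(\cdot,\lambda)v\in L^{2}(\bbR_{+};\bbC^{2})$. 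Choosing $v$ so that $\Phi(\cdot,\lambda)v=F$ and matching initial values at $x=0$ (the two matching conditions are compatible by the boundary condition $e'(0)=-\alpha e(0)$) yields, via the explicit forms~\eqref{eq:a2b} of $S$ and $C$,
\[
v=\sqrt{1+|\alpha|^{2}}\begin{pmatrix}\overline{e(0)}\\ e(0)\end{pmatrix}\ \text{if }\alpha\neq\infty,\qquad v=-\begin{pmatrix}\overline{e'(0)}\\ e'(0)\end{pmatrix}\ \text{if }\alpha=\infty.
\]
Since $\|\Phi(\cdot,\lambda)v\|^{2}=\|F\|^{2}=2$, computing $vv^{*}/2$ and comparing with the block form~\eqref{eq:a1} of $\Sigma$ identifies the diagonal entries with $\nu(\{\lambda\})$ and the off-diagonal entries with $\nu(\{\lambda\})\psi(\lambda)$. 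Rewriting $e(0)$ (resp.\ $e'(0)$) through $\ell_{\alpha}(e)=-\sqrt{1+|\alpha|^{2}}\,e(0)$ (resp.\ $\ell_{\infty}(e)=e'(0)$) then produces~\eqref{e11}, and in particular $|\psi(\lambda)|=1$. That $\ell_{\alpha}(e)\neq 0$ follows because otherwise the boundary condition would force $e(0)=e'(0)=0$ and hence $e\equiv 0$, contradicting the normalisation.

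The main obstacle is the matrix Weyl--Titchmarsh identity for $\Sigma(\{\lambda\})$ at a simple eigenvalue; assuming this (standard but somewhat technical) ingredient from the matrix spectral theory of $\bH$, the remaining steps are a direct computation dictated by the explicit boundary data of the distinguished solution.
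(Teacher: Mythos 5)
Your overall strategy coincides with the paper's: identify the normalised eigenvector of $\bH$ at $\lambda$ as $\tfrac{1}{\sqrt2}\bigl(\begin{smallmatrix}\overline{e}\\ e\end{smallmatrix}\bigr)$, express the point mass $\Sigma(\{\lambda\})$ as a rank-one matrix built from the boundary data of this eigenvector, and compare with the representation \eqref{eq:a1}. Your bookkeeping is correct throughout: the verification that $F=\bigl(\begin{smallmatrix}\overline{e}\\ e\end{smallmatrix}\bigr)$ is an eigenfunction, the simplicity of $\lambda$ as an eigenvalue of $\bH$, the identification $v=-\bigl(\begin{smallmatrix}\overline{\ell_\alpha(e)}\\ \ell_\alpha(e)\end{smallmatrix}\bigr)$ via $\ell_\alpha(e)=-\sqrt{1+|\alpha|^2}\,e(0)$ (resp.\ $\ell_\infty(e)=e'(0)$), the nonvanishing of $\ell_\alpha(e)$, and the final reading-off of $\nu(\{\lambda\})$ and $\psi(\lambda)$ all match the paper's computation, which uses the functional $L_\alpha(F_\lambda)=C\epsilon F_\lambda'(0)-SF_\lambda(0)$ and is equivalent to your $vv^*/\|\Phi(\cdot,\lambda)v\|^2$ after noting $L_\alpha(\Phi(\cdot,\lambda))=-I$.

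The genuine gap is exactly the ingredient you flag as "assumed": the identity $\Sigma(\{\lambda\})=vv^*/\|\Phi(\cdot,\lambda)v\|_{L^2}^2$. This is not part of the machinery developed for Theorem~\ref{thm:a1}; it is the content of the paper's Lemma~\ref{lma.e2}, which is proved specifically for this theorem. The paper's argument runs as follows: the Stieltjes inversion gives $-\ii\delta M_\alpha(\lambda+\ii\delta)\to\Sigma(\{\lambda\})$; splitting the resolvent kernel (Proposition~\ref{prop:resolvent_kernel}) as $\bR=\Phi M_\alpha\Phi^*+\bR_2$ with $\bR_2$ entire in the spectral parameter, one gets $-\ii\delta\,\bR(x,y;\lambda+\ii\delta)\to\Phi(x,\lambda)\Sigma(\{\lambda\})\Phi(y,\lambda)^*$; the spectral theorem identifies the same limit with $F_\lambda(x)F_\lambda(y)^*$; and applying the boundary functional $L_\alpha$ in both variables extracts $\Sigma(\{\lambda\})$. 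Your formula is a correct reformulation of the resulting identity $\Phi(x,\lambda)\Sigma(\{\lambda\})\Phi(y,\lambda)^*=F_\lambda(x)F_\lambda(y)^*$ (using that the space of $v$ with $\Phi(\cdot,\lambda)v\in L^2$ is one-dimensional at a simple eigenvalue), but without some such argument the proof is incomplete at precisely the step that carries the analytic weight.
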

Proofs of Lemma~\ref{lma.b1} and Theorem~\ref{thm.5.4} are given in Section~\ref{sec.f}.

\subsection{Organisation of the paper}
In Section~\ref{sec.disc} we discuss some related results. 
In Section~\ref{sec:a3}, we briefly recall key facts from the Titchmarsh--Weyl theory and the spectral theory of $\bH$.  In Sections~\ref{sec.d} and~\ref{sec.c}, the Borg--Marchenko type Theorem~\ref{thm:a1} is proven. Proofs of Theorems~\ref{thm:a2} and~\ref{thm.diagh} are given in Section~\ref{sec.b}. In Section~\ref{sec.r}, the case of self-adjoint and normal $H$ is considered and Theorems~\ref{thm:a5}, \ref{thm:a6} and \ref{thm:a7} are proved. In Section~\ref{sec:examp}, as a concrete example, we compute the spectral pair for the free Schr{\"o}dinger operator. Asymptotic formulas of Theorem~\ref{thm:asympt_Sigma} are deduced in Section~\ref{sec.t}. Formulas for the spectral pair at a simple eigenvalue of $|H|$ of Theorem~\ref{thm.5.4} as well as  Lemma~\ref{lma.b1} are proven in Section~\ref{sec.f}. Concluding remarks concerning possible extensions of our results are formulated in Section~\ref{sec:misc}.  Finally, the paper is concluded by an appendix, where key results from the spectral theory of $\bH$ are proved for completeness. Although these results are known to experts, extracting precise proofs from the existing literature is not always a~simple task.

\subsection{Acknowledgements}
We are grateful to Fritz Gesztesy for very useful discussions.

\section{Related literature}\label{sec.disc}

\subsection{Comparison with Jacobi matrices}

We are strongly motivated by our previous work \cite{PS1}, where we have introduced the notion of spectral pair and studied the corresponding spectral map for a class of non-self-adjoint Jacobi matrices. Here we recall the key points of \cite{PS1} (see also subsequent works~\cite{PS2} and \cite{ELY}) for the purposes of comparison with the current paper. 

Jacobi matrices can be thought of as a discrete variant of second order differential operators on the half-line. Below for brevity, we will refer to the theory of Jacobi matrices as \emph{discrete case} and to the theory of Schr{\" o}dinger operators as \emph{continuous case}. 

For two bounded sequences $\{a_{n}\}_{n=0}^{\infty}$ and $\{b_{n}\}_{n=0}^{\infty}$, we denote
\[
J:=\begin{pmatrix}
b_0 & a_0 & 0 & 0 & 0 &\cdots\\
a_0 & b_1 & a_1 & 0 & 0 & \cdots\\
0 & a_1 & b_2 & a_2 & 0 &\cdots\\
0 & 0 & a_2 & b_3 & a_3 & \cdots\\
\vdots&\vdots&\vdots&\vdots&\vdots&\ddots
\end{pmatrix}.
\]
This defines  $J$ as a bounded operator on $\ell^{2}(\bbN_{0})$, $\bbN_{0}=\{0,1,2,\dots\}$; we will comment on the boundedness assumption below. Let us start with the self-adjoint case:
\begin{equation}
a_n>0\quad\text{ and }\quad b_n\in\bbR.
\label{e11b}
\end{equation}
It is well-known and easy to check that the first vector $\delta_{0}=(1,0,0,\dots)$ of the standard basis in $\ell^{2}(\bbN_{0})$ is a cyclic element of $J$. 
The corresponding spectral measure $\sigma$ is uniquely defined by the equation
\[
\jap{f(J)\delta_0,\delta_0}_{\ell^{2}(\bbN_{0})}
=
\int_{\bbR}f(x)\dd\sigma(x)
\]
for all $f\in C(\bbR)$. Thus defined, $\sigma$ is a scalar probability measure on $\bbR$ with the support that is compact (because $J$ is bounded) and infinite (as a set). It is a standard result of the theory of Jacobi matrices (see e.g. \cite{Akh}) that the \emph{spectral map} 
\[
\bigl(\{a_{n}\}_{n=0}^{\infty},\{b_{n}\}_{n=0}^{\infty}\bigr)
\mapsto \sigma
\]
is a one-to-one correspondence between the set of bounded sequences satisfying \eqref{e11b} and the set of probability measures on $\bbR$ with compact infinite support. 

Now we turn to the non-self-adjoint case considered in \cite{PS1}. We relax the self-adjointness condition \eqref{e11b} to 
\begin{equation}
a_n>0\quad\text{ and }\quad b_n\in\bbC.
\label{e11c}
\end{equation}
The hermitisation of $J$, i.e. the operator 
\[
 \hskip12pt 
 \begin{pmatrix}
  0 & J \\ J^{*} & 0
  \end{pmatrix}
\quad \text{ on }\; \ell^{2}(\bbN_{0})\oplus\ell^{2}(\bbN_{0}),
\]
is unitarily equivalent to the self-adjoint block Jacobi matrix
\[
\bJ=\begin{pmatrix}
B_0 & A_0 & 0 & 0 & 0 &\cdots\\
A_0 & B_1 & A_1 & 0 & 0 & \cdots\\
0 & A_1 & B_2 & A_2 & 0 &\cdots\\
0 & 0 & A_2 & B_3 & A_3 & \cdots\\
\vdots&\vdots&\vdots&\vdots&\vdots&\ddots
\end{pmatrix} \quad\mbox{ with }\quad
\substack{\displaystyle
  A_{n}=\begin{pmatrix}
   0 & a_{n} \\ a_{n} & 0
  \end{pmatrix},
  \\[12pt]
  \displaystyle
  B_{n}=\begin{pmatrix}
   0 & b_{n} \\ \overline{b_{n}} & 0
  \end{pmatrix},
 }
\]
acting on $\ell^{2}(\bbN_{0};\bbC^{2})$. Denoting by $\mathbf{P}_0:\ell^{2}(\bbN_{0};\bbC^{2})\to\bbC^{2}$ the projection onto the first component, the $2\times 2$ matrix-valued \emph{spectral measure} of $\bJ$ can be defined by the formula
\[
 \Sigma:=\mathbf{P}_0 E_{\bJ}\mathbf{P}_0^{*},
\]
where $E_{\bJ}$ is the projection-valued spectral measure of $\bJ$. Since $\bJ$ enjoys symmetries analogous to $\bH$, one can prove exactly the same formula as~\eqref{eq:a1}
with an even positive measure $\nu$ on $\bbR$ and an odd complex-valued function $\psi\in L^\infty(\nu)$ satisfying $\abs{\psi(s)}\leq1$ for $\nu$-a.e. $s\in\bbR$. 
The support of $\nu$ is compact and infinite. 

The main result of \cite{PS1} says that the spectral map 
\begin{equation}
\bigl(\{a_{n}\}_{n=0}^{\infty},\{b_{n}\}_{n=0}^{\infty}\bigr)
\mapsto (\nu,\psi)
\label{e11d}
\end{equation}
is a bijection between the set of all bounded sequences satisfying \eqref{e11c} and the set of all pairs $(\nu,\psi)$, where $\nu$ is an even measure with compact infinite support and $\psi\in L^\infty(\nu)$ is an odd complex-valued function satisfying $\abs{\psi(s)}\leq1$ for $\nu$-a.e. $s\in\bbR$. 

The injectivity part of this statement is the discrete analogue of Theorem~\ref{thm:a3}. The surjectivity statement is only known in the discrete case. 

Another (shorter) proof of the bijectivity of the spectral map \eqref{e11d} was given in~\cite[Sec.~3]{PS2}. The discrete analogue of Theorem~\ref{thm.diagh} is not explicitly stated in \cite{PS1}, but it was established in the follow-up paper \cite{PS2} in an abstract setting. Discrete analogues of Theorem~\ref{thm:a5} and~\ref{thm:a6} can be found in \cite[Thms.~2.6 and~2.7]{PS1}. 

We mention one difference between the discrete and continuous cases. It was shown in~\cite[Thm.~2.6(iii)]{PS1} that $J$ has vanishing diagonal $b_n=0$ if and only if $\psi=0$. In contrast to this, in the continuous case $\psi$ never vanishes identically, as it would contradict Theorem~\ref{thm:t2}.

The condition of the boundedness of $\{a_{n}\}_{n=0}^{\infty}$ and $\{b_{n}\}_{n=0}^{\infty}$ can be lifted. The spectral map \eqref{e11d} for unbounded non-self-adjoint Jacobi matrices $J$ was considered in the recent paper \cite{ELY}. 

The spectral pair $(\nu,\psi)$ can be related to $J$ more directly via formulas
\begin{align}
\langle f(|J|)\delta_{0},\delta_{0} \rangle_{\ell^{2}(\bbN_{0})}&=\int_{\bbR_{+}}f(s)\dd\nu(s), 
\label{e11e}
\\
\langle Jf(|J|)\delta_{0},\delta_{0} \rangle_{\ell^{2}(\bbN_{0})}&=\int_{\bbR_{+}}sf(s)\psi(s)\dd\nu(s)
\label{e11f}
\end{align}
valid for all $f\in C([0,\infty))$. This allows one to define the spectral pair in the discrete case more directly, bypassing the hermitisation of $J$; this approach is used in \cite{PS1}. 
It is possible to use the same approach to the definition of spectral data in the continuous case, if instead of $\delta_0$ one uses a suitable linear combination of the delta-function at the origin and its derivative. However, we prefer to use the more traditional language of spectral theory involving the Titchmarsh--Weyl $m$-function. 

\subsection{Complex-symmetric structure}
At the heart of our definition of spectral pair is the complex-symmetric structure. 
Let $\calC$ be the operator of complex conjugation acting on $L^2(\bbR_+)$ as $\calC f=\overline{f}$. A linear operator $A$ on $L^2(\bbR_+)$ is called complex-symmetric if it satisfies
\[
\calC A=A^*\calC.
\]
The same definition can be applied to operators on $\ell^2(\bbN_0)$. It is clear that the Schr{\"o}dinger operator $H$ with a complex potential is complex-symmetric in $L^2(\bbR_+)$, and the Jacobi matrices $J$ satisfying \eqref{e11c} are complex-symmetric in $\ell^2(\bbN_0)$.

Let $A$ be a bounded complex-symmetric operator and let $\delta_0$ be a vector satisfying $\calC \delta_0=\delta_0$. Then one can define the spectral data $(\nu,\psi)$ similarly to \eqref{e11e}, \eqref{e11f}. This viewpoint is further explored in \cite{PS2}. A general theory of complex-symmetric operators is surveyed in~\cite{gar-put_06,gar-put_07}.

Another class of operators satisfying the complex-symmetric condition is the class of Hankel operators. Our idea of using the pair $(\nu,\psi)$ for the spectral data is inspired by the inverse spectral theory for compact Hankel operators developed by G\'erard and Grellier; see \cite{GG1,GG2} and references therein. The spectral data used by G\'erard and Grellier resembles the pair $(\nu,\psi)$ (in fact, the $\psi$ component is much more complicated in their case); see also the follow-up paper \cite{GPT}.

\subsection{Marchenko's theory}
It would be remiss not to mention the fundamental work of Marchenko \cite{M2}, see also the earlier paper~\cite{MR} and the book~\cite{M-book}, where he developed a~different kind of inverse spectral theory for non-self-adjoint Schr\"odinger operators $H$. Marchenko associates with $H$ a \emph{spectral function}, which is a distribution on the real line related to the solution $\varphi(x,\lambda)$ for real $\lambda$. He proves the Borg--Marchenko type uniqueness theorem and describes the set of admissible spectral functions. Although Marchenko's theory is conceptually close to our work, we do not pursue a~connection between Marchenko's spectral function and our spectral pair.

\subsection{Further work}
Much work on inverse spectral problems for differential operators, in both self-adjoint and non-self-adjoint case, has been done by Yurko and his collaborators, see~\cite{Y1, Y2,FY1, FY2}. Yurko studies the concept of the Weyl matrix and proves uniqueness theorems using the \emph{method of spectral mappings}. 

In \cite{BPW}, the authors prove a local version of uniqueness theorem for complex potentials.

\section{Preliminaries: the Titchmarsh--Weyl theory for $\bH$}
\label{sec:a3}
Here we briefly recall key facts from the Titchmarsh--Weyl theory for the operators $\bH$ of the form \eqref{a0} with a~general $2\times 2$ Hermitian matrix-valued potential $Q$ and a boundary condition \eqref{eq:a2a}, \eqref{eq:a2b}. This theory has been developed during the second half of the 20th century in much greater generality (for so-called Hamiltonian systems) and can now be considered as standard.

\subsection{$\bH$ is in the limit point case}
For $\lambda\in\bbC$, we consider the eigenvalue equation
\begin{equation}
-\epsilon F''(x,\lambda)+Q(x)F(x,\lambda)=\lambda F(x,\lambda), \quad x>0.
\label{eq:init_diff_eq}
\end{equation}
Here, depending on the context, $F(x,\lambda)$ is either a $2$-component column vector or a $2\times2$ matrix. The boundedness of $Q$ implies that this equation is in the \emph{limit-point case} at $+\infty$. In other words, we have the following fundamental fact, see~\cite[Lem.~2.1 and Cor.~2.1]{HS_81}. 

\begin{lemma}[The dimension lemma]\label{lem:dim_S}
Let $\lambda\in\bbC\setminus\bbR$. The dimension of the space
\begin{equation}
S(\lambda):=\{F\in L^{2}(\bbR_{+};\bbC^{2}) \mid F \mbox{ is a solution of~\eqref{eq:init_diff_eq}}\}
\label{eq.dim}
\end{equation}
equals $2$.
\end{lemma}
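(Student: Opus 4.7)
The plan is to convert~\eqref{eq:init_diff_eq} into an ordinary first-order linear system in $\bbC^{4}$ and pin down $\dim S(\lambda)$ by a boundary-form/signature argument (upper bound) together with the Weyl nested-disc construction (lower bound). Writing $Y(x):=(F(x),\epsilon F'(x))^{T}$ turns~\eqref{eq:init_diff_eq} into $Y'=\mathcal{M}(x,\lambda)Y$ with bounded coefficient matrix $\mathcal{M}$, so the full solution space has complex dimension four and $\dim S(\lambda)\le 4$ is automatic.

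For the upper bound $\dim S(\lambda)\le 2$, I would first record the regularity improvement: if $F\in S(\lambda)$, then $F''=\epsilon(Q-\lambda)F\in L^{2}$, so a standard Fourier/interpolation inequality on the half-line (applied after a cutoff near the boundary) gives $F\in H^{2}(\bbR_{+};\bbC^{2})$, and Sobolev embedding implies that $F$ and $F'$ vanish at infinity. Next, integrating by parts on $[0,R]$ against a second solution $G$ of~\eqref{eq:init_diff_eq} with the same $\lambda$, and using $\epsilon^{*}=\epsilon$ and $Q^{*}=Q$, yields Green's identity
\[
2\ii\,\Im\lambda \int_{0}^{R}\langle F,G\rangle_{\bbC^{2}}\,\dd x = \bigl[\langle F,\epsilon G'\rangle_{\bbC^{2}}-\langle \epsilon F',G\rangle_{\bbC^{2}}\bigr]_{x=0}^{x=R}.
\]
For $F,G\in S(\lambda)$ the boundary term at $R$ tends to zero by the decay above, so the Hermitian form
\[
h(F,G):=\tfrac{1}{2\ii}\bigl(\langle F(0),\epsilon G'(0)\rangle_{\bbC^{2}}-\langle\epsilon F'(0),G(0)\rangle_{\bbC^{2}}\bigr)
\]
restricts on $S(\lambda)$ to $-\Im\lambda\,\langle F,G\rangle_{L^{2}}$. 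Viewed on the four-dimensional space of boundary data $(F(0),F'(0))\in\bbC^{4}$, a short computation in the orthonormal eigenbasis $e_{\pm}=(1,\pm 1)/\sqrt{2}$ of $\epsilon$ decomposes $h$ as $\Im(u_{+}\overline{v_{+}})-\Im(u_{-}\overline{v_{-}})$, which has signature $(2,2)$. A Hermitian form of signature $(2,2)$ cannot be definite on any subspace of dimension greater than two, and since $\Im\lambda\ne 0$ forces $h$ to be (positive or negative) definite on $S(\lambda)$, we obtain $\dim S(\lambda)\le 2$.

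For the matching lower bound $\dim S(\lambda)\ge 2$, I would invoke the Weyl nested-disc construction for Hamiltonian systems as in the cited reference~\cite{HS_81}: truncating to $[0,R]$ with the regular condition~\eqref{eq:a2a} at $0$ and an arbitrary self-adjoint condition at $R$ produces a self-adjoint operator $\bH_{R}$ whose resolvent at $\lambda$ has norm bounded by $|\Im\lambda|^{-1}$. Varying the condition at the regular endpoint $R$ parameterises the $2\times 2$ Weyl $M$-matrices by a nested family of matrix discs that contracts (yet remains non-empty) as $R\to\infty$, and its limit yields two linearly independent $L^{2}$ solutions of~\eqref{eq:init_diff_eq}.

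I anticipate that the main obstacle lies in the lower bound, because the sign-indefinite $\epsilon$ rules out the direct energy estimates available for scalar Schr\"odinger operators or for positive-definite Hamiltonian systems and forces one to work in the symplectic/para-Hermitian geometry encoded by the form $h$ above. Fortunately, this is precisely the machinery developed in~\cite{HS_81}, which we would cite rather than reprove.
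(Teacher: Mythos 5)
Your proposal is correct, but it follows a genuinely different route from the paper. The paper's proof is a three-line reduction to operator theory: $\dim S(\lambda)$ equals the deficiency index of $\bH$ restricted to $C_0^\infty(\bbR_+;\bbC^2)$, deficiency indices are stable under the bounded perturbation $Q$, and for $Q=0$ the equation decouples in the eigenbasis of $\epsilon$ into $-f''=\lambda f$ and $f''=\lambda f$, each contributing exactly one $L^2$ solution. Your argument instead splits the claim into two inequalities: the upper bound via Green's identity and the observation that the boundary form $h$ has signature $(2,2)$ on $\bbC^4$ while being definite on the image of $S(\lambda)$, and the lower bound via the Hinton--Shaw nested matrix discs. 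What your approach buys is that the upper bound is completely self-contained and makes explicit \emph{why} the sign-indefiniteness of $\epsilon$ still yields dimension two rather than the $0$--$4$ range a priori possible: the two eigendirections of $\epsilon$ each contribute a $(1,1)$ block to $h$. What it costs is the lower bound, which you outsource to \cite{HS_81}; note however that the paper reproduces exactly this nested-disc construction in its appendix to prove the existence of $M_\alpha(\lambda)$ (only the \emph{uniqueness} of $M_\alpha$ invokes the dimension lemma), so there is no circularity and your citation could equally point at that appendix. Two small inaccuracies that do not affect correctness: the displayed decomposition of $h$ should read $\Im(u_+\overline{u'_+})-\Im(u_-\overline{u'_-})$ for the quadratic form, i.e.\ it must pair the value components with the derivative components rather than two copies of the values; and for the lower bound you only need the discs to be nonempty, compact and nested (so that a limit point exists), not that they contract to a point --- the contraction is the limit-point property, which is a separate (stronger) statement.
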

\begin{proof}
The dimension of $S(\lambda)$ for $\lambda$ in the upper and lower half-planes coincides with the deficiency indices of the operator $\bH$ restricted to the space $C_{0}^{\infty}(\bbR_+;\bbC^2)$. Since the deficiency indices are invariant under the bounded perturbation by $Q$, it suffices to find them for $Q=0$. In this case, one checks that $\dim S(\lambda)=2$ by a~direct computation.
\end{proof}

\subsection{The Titchmarsh--Weyl $M$-function}\label{sec.b2}
Let us denote by $\Phi,\Theta$ the $2\times2$ matrix-valued solutions (the fundamental system) of the eigenvalue equation \eqref{eq:init_diff_eq} satisfying the boundary conditions
\begin{equation}
\begin{aligned}
\Phi(0,\lambda)&=S,\\
\epsilon\Phi'(0,\lambda)&=-C,
\end{aligned}
\qquad
\begin{aligned}
\Theta(0,\lambda)=C, \\
\epsilon\Theta'(0,\lambda)=S,
\end{aligned}
\quad
\label{eq:phi,theta_bc_cond}
\end{equation}
where $\lambda\in\bbC$ and the matrices $S$, $C$ are as in \eqref{eq:a2b}. 
We note that $\Phi$ (but not $\Theta$) satisfies the boundary condition \eqref{eq:a2a}. The following statement is the direct analogue of the definition of the scalar $m$-function.  The proof and relevant literature is discussed in Appendix~\ref{sec:app1}.

\begin{proposition}\label{prp.Mf}
For every $\lambda\in\bbC\setminus\bbR$, there exists a unique $2\times 2$ matrix $M_{\alpha}(\lambda)$ such that both columns of the matrix
\begin{equation}
X(x,\lambda)=\Theta(x,\lambda)-\Phi(x,\lambda)M_\alpha(\lambda)
\label{eq:X}
\end{equation}
belong to $L^2(\bbR_+;\bbC^2)$ as a function of $x$. The matrix-valued function $M_\alpha$ is analytic in $\bbC\setminus\bbR$, satisfies the identity 
\begin{equation}
M_\alpha(\overline{\lambda})=M_\alpha(\lambda)^{*},
\quad 
\lambda\in\bbC\setminus\bbR,
\label{eq:a4}
\end{equation}
and
\begin{equation}
\Im M_\alpha(\lambda)>0, \quad \text{ if }\Im \lambda>0.
\label{eq:a4a}
\end{equation}
\end{proposition}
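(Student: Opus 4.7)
The plan is to prove the four assertions---existence and uniqueness of $M_\alpha(\lambda)$, the symmetry \eqref{eq:a4}, the positivity \eqref{eq:a4a}, and analyticity---by combining Lemma~\ref{lem:dim_S}, the self-adjointness of $\bH$, and a Lagrange-type identity adapted to the sign-indefinite system \eqref{eq:init_diff_eq}.

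For existence and uniqueness: The block Wronskian of $\Phi, \Theta$ at $x=0$ equals $\begin{pmatrix} S & C \\ -C & S \end{pmatrix}$, which is invertible because $S^2 + C^2 = I$ by \eqref{eq:S-C_id}; hence the columns of $\Phi(\cdot,\lambda)$ and $\Theta(\cdot,\lambda)$ together span the four-dimensional solution space of \eqref{eq:init_diff_eq}. I pick a basis $f_1, f_2$ of $S(\lambda)$ (of dimension $2$ by Lemma~\ref{lem:dim_S}) and write $(f_1, f_2) = \Theta B - \Phi A$ for $2 \times 2$ matrices $A, B$. The key observation is that $B$ must be invertible: if $Bv = 0$ for some $v \neq 0$, then $(f_1, f_2)v = -\Phi(Av) \in L^2(\bbR_+;\bbC^2)$, and $\Phi(Av)$ satisfies the boundary condition \eqref{eq:a2a}, so it lies in $\Dom \bH$ and is an eigenvector of $\bH$ with eigenvalue $\lambda \notin \bbR$; self-adjointness of $\bH$ forces $\Phi(Av) = 0$, hence $Av = 0$ by linear independence of the columns of $\Phi$, hence $(f_1, f_2)v = 0$, contradicting the linear independence of $f_1, f_2$. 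Setting $M_\alpha(\lambda) := A B^{-1}$ yields $\Theta - \Phi M_\alpha = (f_1, f_2) B^{-1}$, whose columns span $S(\lambda)$. Uniqueness follows from the same self-adjointness argument applied to the difference of two candidate $M$-functions.

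For \eqref{eq:a4} and \eqref{eq:a4a} I would introduce the boundary form $W(F,G)(x) := F(x)^* \epsilon G'(x) - F'(x)^* \epsilon G(x)$. A direct differentiation using $\epsilon^* = \epsilon$ and $Q^* = Q$ gives the Lagrange identity $\frac{\dd}{\dd x} W(F,G) = (\overline{\lambda} - \mu)\, F^* G$ whenever $F, G$ solve \eqref{eq:init_diff_eq} with spectral parameters $\lambda, \mu$. Applied to $F = X(\cdot,\lambda)$ and $G = X(\cdot,\overline{\lambda})$ the right-hand side vanishes, so $W$ is constant; evaluating at $x = 0$ via \eqref{eq:phi,theta_bc_cond} and collapsing with the identities $SC = CS$ and $S^2 + C^2 = I$ yields the clean expression $W(F, G)(0) = M_\alpha(\overline{\lambda}) - M_\alpha(\lambda)^*$. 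Combined with $W(F, G)(\infty) = 0$, which is the limit-point property at $+\infty$, this proves \eqref{eq:a4}. Specialising instead to $F = G = X(\cdot,\lambda)$ gives $W(X,X)(0) = 2\ii\,\Im M_\alpha(\lambda)$ by the same boundary computation, while integrating the Lagrange identity produces $W(X,X)(\infty) - W(X,X)(0) = -2\ii\,(\Im\lambda) \int_0^\infty X^* X\,\dd x$. Using $W(X,X)(\infty) = 0$ then yields $\Im M_\alpha(\lambda) = (\Im \lambda) \int_0^\infty X^* X\,\dd x$, which is strictly positive definite for $\Im \lambda > 0$ since the columns of $X$ are $L^2$-linearly independent (they form a basis of $S(\lambda)$).

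For analyticity of $M_\alpha$ on $\bbC \setminus \bbR$, I would invoke the classical Weyl-disk construction: on each finite interval $(0, b)$, the set of boundary data at $x = b$ that together with \eqref{eq:a2a} at $x = 0$ produces a self-adjoint problem forms a matrix M\"obius image of $\{M \in \bbC^{2 \times 2} : \Im M > 0\}$ depending holomorphically on $\lambda$; these discs are nested in $b$, and in the limit-point case they contract to the single point $M_\alpha(\lambda)$, whose analyticity follows by a normal-families argument. The main obstacle is to justify rigorously the two ``limit-point'' inputs invoked above: the vanishing of $W(X,X)(\infty)$ and the nesting of the Weyl discs down to a point. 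For the former, the cleanest route is to note that $X \in L^2$ columnwise by construction and that $\epsilon X'' = (Q - \lambda) X \in L^2$ by boundedness of $Q$, so $X' \in L^2$ by standard Sobolev interpolation; hence $W(X,X)$ is absolutely continuous with integrable derivative and has a limit at $+\infty$, and this limit must vanish (otherwise one could manufacture a third $L^2$ solution, violating $\dim S(\lambda) = 2$). I would relegate these standard but technical verifications to the appendix, as the authors announce.
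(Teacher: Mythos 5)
Your proposal is correct, and it reaches the conclusion by a genuinely different route from the paper's. The paper (Appendix, Section~\ref{sec:app1}) constructs $M_\alpha(\lambda)$ as the $b\to\infty$ limit of the finite-interval Dirichlet $M$-functions $M_\alpha(b;\lambda)=\Phi(b,\lambda)^{-1}\Theta(b,\lambda)$, using the identity of Lemma~\ref{lem:Im_M_b_id} to define nested compact Weyl disks $\mathcal{D}(b;\lambda)$; this one construction simultaneously yields existence, the $L^2$ property, the positivity \eqref{eq:a4a} (via the limiting inequality \eqref{eq:X2a}), analyticity (locally uniform limits of analytic functions), and the symmetry \eqref{eq:a4} (by passing the finite-$b$ identity coming from \eqref{eq.X3} to the limit), with uniqueness from the dimension lemma. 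You instead obtain existence and uniqueness directly from Lemma~\ref{lem:dim_S} together with the absence of non-real eigenvalues of the self-adjoint $\bH$ (your invertibility argument for the coefficient matrix $B$ is sound, including the step that an $L^2$ solution of \eqref{eq:init_diff_eq} satisfying \eqref{eq:a2a} lies in $\Dom\bH$ because $Q$ is bounded), and you derive \eqref{eq:a4} and \eqref{eq:a4a} from a half-line Lagrange identity whose boundary computation at $x=0$ via \eqref{eq:phi,theta_bc_cond} and \eqref{eq:S-C_id} I have checked; this is arguably cleaner and gives the exact half-line identity $\Im M_\alpha(\lambda)=(\Im\lambda)\int_0^\infty X^*X\,\dd x$ rather than the paper's one-sided bound. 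Two caveats. First, your justification that $W(X,X)(\infty)=0$ (``otherwise one could manufacture a third $L^2$ solution'') is not the right reason; the correct one-line argument, available from your own setup, is that once $X,X'\in L^2$ the function $W(X,X)$ is integrable on $\bbR_+$ and you have shown it has a limit at infinity, so that limit must be zero. Second, analyticity is the one place where your shortcut does not pay off: having bypassed the Weyl disks for existence, you must still build them (boundedness, nesting, contraction to the already-identified unique point, then Montel) to get analyticity, which is essentially the entirety of the paper's appendix argument; your deferral of this to ``standard but technical verifications'' is acceptable as a plan but is the substantive part of the proof that remains to be written.
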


In \eqref{eq:a4a}, $M>0$ means that the matrix $M$ is positive definite and we use the notation 
\[
\Re M =\frac12(M+M^{*}) \quad\mbox{ and }\quad \Im M = \frac1{2\ii}(M-M^{*}). 
\] 
Inspecting the initial conditions for the solution $X(x,\lambda)$ of \eqref{eq:X}, we find that its columns are linearly independent and so they span the 2-dimensional space $S(\lambda)$ of \eqref{eq.dim}. Thus, the uniqueness of $M_\alpha(\lambda)$ follows immediately from the dimension lemma. 

Proposition~\ref{prp.Mf} implies that $M_{\alpha}$ is a matrix-valued Herglotz--Nevanlinna function and therefore, by the general integral representation theorem, see~\cite[Theorems~2.3 and 5.4]{GT}, we have 
\begin{equation}
M_\alpha(\lambda)=
\Re M_\alpha(\ii)+\int_{-\infty}^\infty \left(\frac{1}{t-\lambda}-\frac{t}{1+t^2}\right)\dd\Sigma(t), \quad 
\lambda\in\bbC\setminus\bbR,
\label{eq:intres}
\end{equation}
where $\Sigma$ is a unique $2\times 2$ matrix-valued measure on $\bbR$ such that the integral
\[
 \int_{-\infty}^{\infty}\frac{\dd\Sigma(t)}{1+t^{2}}
\]
converges. The measure $\Sigma$ is called the \emph{spectral measure} of $\bH$.

\subsection{Asymptotic formula for the $M$-function}
We denote by $P_+$ and $P_-$ the orthogonal projections 
\begin{equation}
P_{+}:=\frac{1}{2}\begin{pmatrix}1&1\\1&1\end{pmatrix}
\quad\text{ and }\quad
P_{-}:=\frac{1}{2}\begin{pmatrix}1&-1\\-1&1\end{pmatrix}\, .
\label{eq:Ppm}
\end{equation}

\begin{proposition}\label{prop:M_func_asympt}
Let $\alpha\in\bbC\cup\{\infty\}$ and $\lambda=k^2$ with $\Re k>0$, $\Im k>0$. Then as $\abs{k}\to\infty$ along any ray, the $M$-function satisfies
\begin{equation}
M_\alpha(\lambda)=
\epsilon A+\dfrac{1+|\alpha|^{2}}{k}(\ii P_{+}-P_{-})+\dfrac{1+\abs{\alpha}^2}{k^2}(\ii P_{+}-P_{-})\epsilon A(\ii P_{+}-P_{-})+\mathcal{O}(1/k^3),
\label{eq.Masymp1}
\end{equation}
if $\alpha\not=\infty$, and 
\begin{equation}
M_\infty(\lambda)=k(\ii P_{+}+P_{-})+\mathcal{O}(1/k), 
\label{eq.Masymp2}
\end{equation}
if $\alpha=\infty$. 
\end{proposition}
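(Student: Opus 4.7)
My plan is to isolate the main calculation in the free case $Q \equiv 0$, where the eigenvalue equation decouples along the spectral projections of $\epsilon$, and then argue perturbatively that a bounded potential contributes only to the stated remainder term.

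For $Q \equiv 0$, the equation $-\epsilon F'' = k^2 F$ decouples via $\epsilon = P_+ - P_-$: in the quadrant $\Re k > 0$, $\Im k > 0$, the decaying branches are $e^{ikx}$ on $\Ran P_+$ and $e^{-kx}$ on $\Ran P_-$. Hence every matrix-valued $L^2$ solution has the form $X_0(x,k) = (e^{ikx} P_+ + e^{-kx} P_-) W$ for a constant $2\times 2$ matrix $W$, with boundary data $X_0(0) = W$ and $\epsilon X_0'(0) = K W$, where I write $K := k(iP_+ + P_-)$. Matching this with the defining relations $X(0) = C - SM_\alpha$ and $\epsilon X'(0) = S + CM_\alpha$ coming from~\eqref{eq:X} and~\eqref{eq:phi,theta_bc_cond}, and eliminating $W$, I obtain the closed form
\[
M_\alpha^{(0)} = (C + KS)^{-1}(KC - S).
\]
For $\alpha = \infty$ (so $S = 0$, $C = \epsilon$), a direct substitution using the identities $P_+\epsilon = P_+$, $P_-\epsilon = -P_-$ (immediate from $P_\pm = (I\pm\epsilon)/2$) gives $M_\infty^{(0)} = k(iP_+ + P_-)$, which is the leading term of~\eqref{eq.Masymp2}. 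For $\alpha \in \bbC$, inserting $S = \beta I$, $C = \beta\epsilon A$ with $\beta = (1+|\alpha|^2)^{-1/2}$ and using the algebraic identity $(\epsilon A)^2 = |\alpha|^2 I$ (immediate from $A = \mathrm{diag}(\overline{\alpha}, \alpha)$), the formula simplifies to
\[
M_\alpha^{(0)} = \epsilon A - (1+|\alpha|^2)(K + \epsilon A)^{-1}.
\]
The Neumann expansion $(K + \epsilon A)^{-1} = K^{-1} - K^{-1}\epsilon A K^{-1} + O(k^{-3})$, combined with $K^{-1} = k^{-1}(-iP_+ + P_-)$, then reproduces exactly the three stated terms of~\eqref{eq.Masymp1}.

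To promote this to general bounded $Q$, I would set up a Volterra integral equation for the perturbed $L^2$ solution, built from the free Green's kernel $G_0$ of $-\epsilon F'' - k^2 F$. On $\Ran P_+$ this kernel carries the oscillatory weight $e^{ik|x-y|}/k$ and on $\Ran P_-$ the exponentially decaying weight $e^{-k|x-y|}/k$. For $|k|$ large, both pieces give operators of norm $O(1/k)$ after integration against bounded $Q(y)$, so the Neumann series for $X(x,k)$ converges and each iterate gains an extra factor of $1/k$. Reading off the boundary values $X(0,k)$ and $X'(0,k)$ and inverting the relations $X(0) = C - SM_\alpha$, $\epsilon X'(0) = S + CM_\alpha$ then shows that the three-term expansion~\eqref{eq.Masymp1} (respectively the single-term expansion~\eqref{eq.Masymp2}) survives with the claimed $O(1/k^3)$ (resp.\ $O(1/k)$) remainder.

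The main obstacle I foresee is this Volterra estimate. The Green's kernel is not uniformly small pointwise, since the two invariant subspaces of $\epsilon$ carry different decay laws --- bounded oscillation on $\Ran P_+$ versus true exponential decay on $\Ran P_-$. The natural remedy is a weighted norm that separately controls the two components, after which the estimates are routine. This is the same point at which the sign-indefinite character of $\epsilon$, already encountered in the proof of Theorem~\ref{thm:a1}, requires more care than in the classical scalar high-energy WKB analysis of Weyl $m$-functions.
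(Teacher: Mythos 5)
Your free-case computation is correct and complete: the closed form $M^{(0)}_\alpha=(C+KS)^{-1}(KC-S)$ with $K=k(\ii P_++P_-)$, its reduction to $\epsilon A-(1+\abs{\alpha}^2)(K+\epsilon A)^{-1}$ via $(\epsilon A)^2=\abs{\alpha}^2 I$, and the Neumann expansion of $(K+\epsilon A)^{-1}$ reproduce all three terms of \eqref{eq.Masymp1} and the leading term of \eqref{eq.Masymp2}. This is organised a little differently from the paper, which computes only the free \emph{Neumann} $M$-function $M_0^{(0)}(\lambda)=\tfrac1k(\ii P_+-P_-)$ and then passes to general $\alpha$ through the exact relations \eqref{eq:X13}; the two routes are algebraically equivalent, and yours has the small advantage of exhibiting the $\alpha$-dependence of the free $M$-function in one formula.

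The gap is in the perturbative step, and it is quantitative rather than conceptual. You claim that the integral operator built from the free Green's kernel has norm $\mathcal O(1/k)$ and that ``each iterate gains an extra factor of $1/k$,'' and from this you conclude an $\mathcal O(1/k^3)$ remainder. That accounting does not close: if the potential genuinely contributed at order $1/k$ per iterate, the first two iterates would produce $Q$-dependent terms at orders $k^{-1}$ and $k^{-2}$, which would contradict \eqref{eq.Masymp1} (whose coefficients at those orders depend only on $\alpha$). What actually makes the proof work is that the \emph{total} contribution of $Q$ to the $M$-function is already $\mathcal O(k^{-3})$: along a ray strictly inside the first quadrant the free resolvent kernel with one argument at the origin is $\mathcal O(\abs{k}^{-1}\ee^{-c\abs{k}y})$, so the first Born term $-\int_0^\infty \bR^{(0)}(0,y)Q(y)\bR(y,0)\,\dd y$ picks up two factors of $k^{-1}$ from the boundary values and one more from the $y$-integration. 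This is precisely what the paper encodes in the resolvent-identity estimate \eqref{eq:x1}, $\norm{M_0-M_0^{(0)}}\leq\abs{\Im\lambda}^{-1}\norm{Q}_{L^\infty}\abs{\Im\Tr M_0}^{1/2}\abs{\Im\Tr M_0^{(0)}}^{1/2}=\mathcal O(\abs{k}^{-3})$ after a short bootstrap, with the potential handled once and for all at the Neumann level and the general $\alpha$ recovered algebraically from \eqref{eq:X13}. Your scheme can be repaired by carrying out this sharper power count (or by adopting the resolvent-identity argument), but as written the claimed remainder does not follow from the stated estimates. Two smaller points: the equation obtained from the half-line Green's kernel integrates over all $y\in(0,\infty)$ and is therefore not Volterra; and the smallness of the kernel relies on the ray being strictly interior to the quadrant, so that both $\ee^{\ii k\abs{x-y}}$ and $\ee^{-k\abs{x-y}}$ decay at rate comparable to $\abs{k}\abs{x-y}$ --- this is where the restriction $\Re k>0$, $\Im k>0$ enters.
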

If $\alpha=0$, the expression \eqref{eq.Masymp1}  simplifies to 
\begin{equation}
M_0(\lambda)=\dfrac{1}{k}(\ii P_{+}-P_{-})+\mathcal{O}(1/k^3).
\label{eq.Masymp3}
\end{equation}
Explicitly, the matrices appearing in the above asympotics are 
\begin{equation}
\epsilon A
=\begin{pmatrix}0&\alpha \\ \overline{\alpha}&0\end{pmatrix},
\quad
(\ii P_{+}-P_{-})\epsilon A(\ii P_{+}-P_{-})
=
-
\begin{pmatrix}\Re\alpha&-\Im\alpha \\ \Im\alpha&\Re\alpha\end{pmatrix}.
\label{eq.Masymp4}
\end{equation}
The proof of Proposition~\ref{prop:M_func_asympt} and comments on the literature are given in the Appendix~\ref{sec.masympt}. We use the boundedness of $Q$ in the proof. However, similar asymptotics, with somewhat worse error estimates, is known for more general potentials; see for example \cite[Thm.~3.2]{And}. 

It is important for us that $\alpha$ can be determined from the asymptotic coefficients in the right hand side of \eqref{eq.Masymp1} and that the error term in \eqref{eq.Masymp2} tends to $0$ as $|k|\to\infty$. 

\subsection{The spectral theorem}
The measure $\Sigma$ is the spectral measure of $\bH$ in the following sense. 
Let $L^{2}_{\Sigma}(\bbR;\bbC^{2})$ be the space of all $\bbC^2$-valued functions on $\bbR$ equipped with the inner product
\[
\jap{F,G}=\int_{-\infty}^\infty \jap{\dd\Sigma(\lambda)F(\lambda),G(\lambda)}_{\bbC^2}.
\]
Define the operator $U: L^2(\bbR_+;\bbC^2)\to L_{\Sigma}^{2}(\bbR;\bbC^{2})$ by
\[
(UF)(\lambda):=\int_0^\infty \Phi(x,\lambda)^{*}F(x)\dd x
\]
initially on the set of smooth compactly supported functions $F$. Also, let $\Lambda$ be the operator of multiplication by the independent variable in $L_{\Sigma}^{2}(\bbR;\bbC^{2})$. 
\begin{proposition}\label{thm.sp.decomp}
The operator $U$ extends to a unitary map from $L^2(\bbR_+;\bbC^2)$ onto $L_{\Sigma}^{2}(\bbR;\bbC^{2})$. 
Moreover, $U$ intertwines $\bH$ with $\Lambda$, i.e. $U\Dom \bH\subset \Dom\Lambda$ and 
\begin{equation}
U\bH F=\Lambda UF
\label{eq:intertwine}
\end{equation}
for all $F\in\Dom \bH$.
\end{proposition}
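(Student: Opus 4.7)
The strategy is the standard Weyl--Titchmarsh recipe, adapted to the indefinite-weight problem $-\epsilon F''+QF=\lambda F$: first construct the Green's function of $\bH$ from $\Phi$ and the $L^{2}$-solution $X$, then extract a Parseval identity by combining Stone's formula with the matrix Herglotz representation \eqref{eq:intres} of $M_\alpha$, and finally establish surjectivity and the intertwining property.

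\textbf{Step 1 (Green's function).} I would verify that the matrix Wronskian
\[
W[\Phi,X](x,\lambda):=\Phi(x,\bar\lambda)^{*}\epsilon X'(x,\lambda)-\bigl(\epsilon\Phi'(x,\bar\lambda)\bigr)^{*}X(x,\lambda)
\]
is independent of $x$ and equals $I$ at $x=0$, which follows at once from the Cauchy data \eqref{eq:phi,theta_bc_cond} and the identity $S^{2}+C^{2}=I$ from \eqref{eq:S-C_id}. Since $\epsilon^{2}=I$, this identifies $(\bH-\lambda)^{-1}$ for $\lambda\in\bbC\setminus\bbR$ as the integral operator with kernel
\[
G(x,y;\lambda)=\begin{cases}\Phi(x,\lambda)\,X(y,\bar\lambda)^{*}, & x\leq y,\\[2pt] X(x,\lambda)\,\Phi(y,\bar\lambda)^{*}, & x\geq y,\end{cases}
\]
the jump across $y=x$ supplying the $\delta$-function (thanks to the Wronskian normalisation) and the boundary condition at $x=0$ being inherited from $\Phi$.

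\textbf{Step 2 (Parseval).} For $F\in C_{0}^{\infty}(\bbR_{+};\bbC^{2})$ the transform $\widehat F(\lambda):=(UF)(\lambda)$ is entire in $\lambda$. Substituting the kernel from Step~1 into $\langle(\bH-\lambda)^{-1}F,F\rangle$ and expanding $X=\Theta-\Phi M_{\alpha}$, the contributions built from $\Phi$ and $\Theta$ alone assemble into an entire-in-$\lambda$ remainder which is annihilated in the Stone limit, while the singular part reduces to $\langle M_{\alpha}(\lambda)\widehat F(\bar\lambda),\widehat F(\lambda)\rangle_{\bbC^{2}}$. Stone's formula combined with the integral representation \eqref{eq:intres} then yields the Parseval identity
\[
\|F\|_{L^{2}}^{2}=\int_{\bbR}\langle\dd\Sigma(\lambda)\widehat F(\lambda),\widehat F(\lambda)\rangle_{\bbC^{2}},
\]
which extends $U$ by density to an isometry $L^{2}(\bbR_{+};\bbC^{2})\to L_{\Sigma}^{2}(\bbR;\bbC^{2})$.

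\textbf{Step 3 (Surjectivity, intertwining, and the main obstacle).} The range of an isometry is closed, so surjectivity reduces to $\ker U^{*}=\{0\}$. A dual calculation gives $(U^{*}G)(x)=\int_{\bbR}\Phi(x,\lambda)\dd\Sigma(\lambda)G(\lambda)$; its vanishing for every $x>0$, together with the real-analytic dependence of $\Phi(x,\cdot)$ on $\lambda$ and the fact that the columns of $\Phi(0,\lambda)$ and $\epsilon\Phi'(0,\lambda)$ together span $\bbC^{2}$ (by $S^{2}+C^{2}=I$), forces $G=0$ $\Sigma$-a.e. The intertwining \eqref{eq:intertwine} follows from integrating by parts twice against $\Phi(x,\lambda)^{*}$: the boundary terms at $x=0$ cancel because $\Phi$ and $F\in\Dom\bH$ satisfy the same condition \eqref{eq:a2a}, those at infinity vanish by decay, and the interior reduces to multiplication by $\lambda$ via $-\epsilon\Phi''+Q\Phi=\lambda\Phi$. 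The main obstacle is Step~2: because $\epsilon$ is sign-indefinite, positivity-based arguments are unavailable, so the matrix Wronskian algebra, the cancellation of the entire remainder, and the passage to the matrix Stone limit all have to be executed by hand, which is presumably why the authors relegate the full proof to the appendix.
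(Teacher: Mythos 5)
Your Steps 1 and 2 follow the paper's route (the resolvent kernel built from $\Phi$ and $X$, then Stone's formula combined with the Herglotz representation \eqref{eq:intres} and Stieltjes inversion), and your intertwining argument is also the paper's, modulo one caveat: for general $F\in\Dom\bH$ the integral $\int_0^\infty\Phi(x,\lambda)^*(\bH F)(x)\,\dd x$ is not absolutely convergent (there is no "decay at infinity" to invoke, since $\Phi(x,\lambda)$ grows exponentially in $x$), so one must first work on the dense subspace of compactly supported $F$ satisfying the boundary condition and then pass to the closure in the graph norm. Also note a sign slip: with your Wronskian normalisation $W[\Phi,X]=I$, the kernel you wrote has jump $+\epsilon$ in $\partial_xG$ across $x=y$, so it represents $(\lambda-\bH)^{-1}$; the resolvent kernel of $(\bH-\lambda)^{-1}$ is $-X(x,\lambda)\Phi(y,\overline{\lambda})^*$ for $x\ge y$ (Proposition~\ref{prop:resolvent_kernel}). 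As written, your Parseval identity would come out with the wrong sign.

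The genuine gap is in your surjectivity argument. First, the formula $(U^*G)(x)=\int_{\bbR}\Phi(x,\lambda)\,\dd\Sigma(\lambda)G(\lambda)$ is only formal for $G\in L^2_\Sigma(\bbR;\bbC^2)$: $\Phi(x,\cdot)$ grows like $\ee^{c\sqrt{\abs{\lambda}}}$ and $\Sigma$ has unbounded support, so the integral need not converge. More seriously, even for $G$ with compact support, "real-analyticity of $\Phi(x,\cdot)$ plus the spanning of the Cauchy data" does not force $G=0$: evaluating the vanishing function and its $x$-derivatives at $x=0$ only produces conditions of the form $\int p(\lambda)\,\dd\Sigma(\lambda)G(\lambda)=0$ for polynomials $p$ (and since $q$ is merely bounded measurable, only two derivatives are even available), which for a measure of unbounded support does not determine $G$. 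The completeness of the eigenfunction transform is precisely the content of surjectivity and is the hard part of the proposition — not Step 2, which goes through exactly as in the $\epsilon=I$ case once the Herglotz property of $M_\alpha$ (Proposition~\ref{prp.Mf}) is in hand. The paper closes this gap by Krein's method of directing functionals: given $G\perp\Ran U$, one uses the $\Lambda$-invariance of $(\Ran U)^\perp$ (a consequence of the intertwining) to reduce to $G_0=\chi_{(a,b)}(\Lambda)G$, and then shows that for the approximate delta functions $F_{\eps,\uparrow}=\eps^{-1}(\chi_{(0,\eps)},0)^T$ and $F_{\eps,\downarrow}$ one has $UF_{\eps,\uparrow}\to(1+\abs{\alpha}^2)^{-1/2}e_\uparrow$ and $UF_{\eps,\downarrow}\to(1+\abs{\alpha}^2)^{-1/2}e_\downarrow$ uniformly on $(a,b)$ (with a second-order variant when $\alpha=\infty$); hence $G_0\perp\xi(\Lambda)\chi_{(a,b)}e_\uparrow$ and $G_0\perp\xi(\Lambda)\chi_{(a,b)}e_\downarrow$ for every bounded Borel $\xi$, and these functions span $L^2_\Sigma((a,b);\bbC^2)$, so $G_0=0$. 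Some argument of this kind is needed to complete your Step 3.
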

See Appendix~\ref{sec:app3} for the proof.

\subsection{Non-singularity of $\Phi$, $\Theta$, $X$ for non-real $\lambda$}
We will frequently need to use the inverses of matrices $\Phi$, $\Theta$, $X$ and their derivatives with respect to $x$, and therefore we need to know that these matrices are non-singular. The following lemma is known; it can be deduced, for example, from the more general statement for Hamiltonian systems given in \cite[Lem.~2.2]{HS_81} and the remark after the lemma. For completeness we provide a short proof below.

\begin{lemma}\label{lma.nonsingular}
For any $\lambda\in\bbC\setminus\bbR$ and any $x>0$, the matrices 
\[
\Phi(x,\lambda), \quad \Theta(x,\lambda), \quad X(x,\lambda)
\quad\text{ and }\quad 
\Phi'(x,\lambda),\quad \Theta'(x,\lambda),\quad X'(x,\lambda)
\]
(the derivatives are with respect to $x$) are non-singular. In case of $X(x,\lambda)$ and $X'(x,\lambda)$, this also applies to $x=0$. 
\end{lemma}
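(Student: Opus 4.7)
The plan is to use a matrix Wronskian argument. For any two $2\times 2$ matrix-valued solutions $F, G$ of~\eqref{eq:init_diff_eq} with spectral parameters $\lambda_{F}, \lambda_{G}$, I define
\[
W(F, G)(x) := F(x)^{*} \epsilon\, G'(x) - F'(x)^{*} \epsilon\, G(x).
\]
A short calculation exploiting the Hermiticity of both $\epsilon$ and $Q$ yields $W(F, G)'(x) = (\overline{\lambda_{F}} - \lambda_{G})\, F(x)^{*} G(x)$; in particular $W(F, F)'(x) = -2\ii\, \Im\lambda \cdot F(x)^{*} F(x)$ when $\lambda_{F} = \lambda$. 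Using the initial conditions~\eqref{eq:phi,theta_bc_cond} together with $SC = CS$ and $S^{2} + C^{2} = I$ from~\eqref{eq:S-C_id}, one computes directly
\[
W(\Phi, \Phi)(0) = W(\Theta, \Theta)(0) = 0, \qquad W(X, X)(0) = 2\ii\, \Im M_{\alpha}(\lambda),
\]
where the last equality uses $X(0, \lambda) = C - S M_{\alpha}(\lambda)$ and $\epsilon X'(0, \lambda) = S + C M_{\alpha}(\lambda)$.

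For $\Phi$ and $\Phi'$ at $x_{0} > 0$ I argue by contradiction. Suppose $\Phi(x_{0}, \lambda) v = 0$ (respectively $\Phi'(x_{0}, \lambda) v = 0$) for some $v \in \bbC^{2} \setminus \{0\}$, and set $f(x) := \Phi(x, \lambda) v$. Then the scalar function $w(x) := v^{*} W(\Phi, \Phi)(x) v = f(x)^{*} \epsilon f'(x) - f'(x)^{*} \epsilon f(x)$ satisfies $w(0) = 0$ and $w(x_{0}) = 0$, so integrating $w' = -2\ii\, \Im\lambda \cdot |f|^{2}$ from $0$ to $x_{0}$ and using $\Im\lambda \neq 0$ gives $f \equiv 0$ on $[0, x_{0}]$; by uniqueness of ODE solutions, $f \equiv 0$ on $\bbR_{+}$. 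Reading off the initial values $\Phi(0) v = Sv = 0$ and $\Phi'(0) v = -\epsilon Cv = 0$, the identity $S^{2} + C^{2} = I$ forces $v = 0$, a contradiction. The case $\Theta, \Theta'$ is handled identically, using $\Theta(0) v = Cv = 0$ and $\Theta'(0) v = \epsilon Sv = 0$.

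For $X, X'$ at any $x_{0} \geq 0$, the same setup applies, but now $f := X v \in L^{2}(\bbR_{+}; \bbC^{2})$. One again has $w(x_{0}) = 0$, and integrating $w'$ between $0$ and $x_{0}$ together with the standard Titchmarsh--Weyl identity
\[
v^{*} \Im M_{\alpha}(\lambda) v = \Im\lambda \int_{0}^{\infty} |f(x)|^{2} \dd x
\]
yields $\int_{x_{0}}^{\infty} |f|^{2} \dd x = 0$, so $f \equiv 0$ globally. Reading off the initial values gives $(C - S M_{\alpha}) v = 0$ and $(S + C M_{\alpha}) v = 0$; multiplying the first by $C$ and the second by $S$, then adding and using $SC = CS$, forces $(C^{2} + S^{2}) v = v = 0$, a contradiction. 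The main technical hazard in this last step is the vanishing of the Wronskian at infinity which underlies the displayed identity above: for the $L^{2}$ solution $f = Xv$ in the limit-point setting this is a standard consequence of the Titchmarsh--Weyl theory developed in the appendix and can be invoked as part of Proposition~\ref{prp.Mf}. Everything else reduces to routine algebra.
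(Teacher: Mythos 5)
Your proof is correct and is essentially the paper's argument made explicit: the paper deduces a contradiction by observing that a nonzero solution vanishing (or with vanishing derivative) at $x_0$ would make $\lambda$ a non-real eigenvalue of a self-adjoint boundary value problem on $(0,x_0)$ (resp.\ on $(x_0,\infty)$ for $X$), and your integrated Lagrange identity $w'=-2\ii\,\Im\lambda\,|f|^2$ with vanishing boundary terms is exactly the standard proof of that fact. The one input you flag for the $X$ case --- the identity $v^*\Im M_\alpha(\lambda)v=\Im\lambda\int_0^\infty|Xv|^2$, i.e.\ the vanishing of the Wronskian at infinity in the limit-point case --- plays the same role as the paper's implicit use of self-adjointness of the operator on $(x_0,\infty)$, so it is a legitimate appeal to Lemma~\ref{lem:dim_S} and Proposition~\ref{prp.Mf} rather than a gap.
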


\begin{proof}
Let $a>0$. Suppose, for example, that $u\in\bbC^2$ is such that $\Phi(a,\lambda)u=0$. Then $u(x):=\Phi(x,\lambda)u$ solves the equation 
\[
-\epsilon u''(x)+Q(x)u(x)=\lambda u(x), \quad x\in (0,a)
\]
and satisfies the boundary conditions \eqref{eq:a2a} at $x=0$ and $u(a)=0$ at $x=a$. It follows that 
$u(x)=0$ for $x\in(0,a)$ since otherwise $\lambda$ would be a non-real eigenvalue of the corresponding self-adjoint boundary value problem on the interval $(0,a)$.
From here it is easy to see that $u=0$. This proves that $\Phi(a,\lambda)$ is non-singular. The argument for $\Phi'$, $\Theta$, $\Theta'$ goes along the same lines.

Let $a\geq0$ and suppose $\det X(a,\lambda)=0$. In a similar way, this implies that $\lambda$ is a non-real eigenvalue of the operator on the interval $(a,\infty)$ with the Dirichlet boundary condition at $x=a$, which leads to a contradiction. The same argument works for $X'(a,\lambda)$, with the Neumann boundary condition at $x=a$ instead of Dirichlet. 
\end{proof}

\subsection{Constancy of Wronskian}
We will make use of the matrix Wronskian 
\[
 [U(x),V(x)]:=U(x)\epsilon V'(x)-U'(x)\epsilon V(x)\, .
\]
It is standard to check that if $U(x,\lambda)$ and $V(x,\lambda)$ are matrix-valued solutions of the eigenvalue equation~\eqref{eq:init_diff_eq}, then the Wronskian $[U(\cdot,\overline{\lambda})^{*},V(\cdot,\lambda)]$ is independent of $x$. In addition, taking into account the boundary conditions~\eqref{eq:phi,theta_bc_cond} and identities~\eqref{eq:S-C_id}, one readily verifies the formulas
\begin{align}
[\Phi(\cdot,\overline{\lambda})^*,\Phi(\cdot,\lambda)]
&=
[\Theta(\cdot,\overline{\lambda})^*,\Theta(\cdot,\lambda)]
=0,
\label{eq:wronsk_1}
\\
[\Phi(\cdot,\overline{\lambda})^*,\Theta(\cdot,\lambda)]
&=
-[\Theta(\cdot,\overline{\lambda})^*,\Phi(\cdot,\lambda)]
=I,
\label{eq:wronsk_2}
\end{align}
which will be used several times. 

\subsection{The case of a compact interval}

Let us briefly discuss the spectral problem for the eigenvalue equation \eqref{eq:init_diff_eq} on the compact interval $[0,b]$ with $b<\infty$. Along with the boundary condition \eqref{eq:a2a0} at $x=0$, we set the boundary condition 
\begin{equation}
F'(b)+BF(b)=0,
\label{eq:bc_b}
\end{equation}
with 
\[
 B=\begin{pmatrix}
  \overline{\beta} & 0 \\ 0 & \beta
 \end{pmatrix},
\]
where $\beta\in\mathbb{C}\cup\{\infty\}$. With solutions $\Phi$ and $\Theta$ as before,  the $M$-function is defined by
\begin{equation}
M_\alpha(b;\lambda):=\left(\Phi'(b,\lambda)+B\Phi(b,\lambda)\right)^{-1}\left(\Theta'(b,\lambda)+B\Theta(b,\lambda)\right)
\label{eq:def_M_b}
\end{equation}
for $\lambda\in\bbC\setminus\bbR$ (the inverse exists by the argument of Lemma~\ref{lma.nonsingular}). We suppress the dependence on $\beta$ in our notation. 
It is easy to see that 
\begin{equation}
 X_{b}(x,\lambda):=\Theta(x,\lambda)-\Phi(x,\lambda)M_{\alpha}(b;\lambda)
\label{eq:def_chi_b}
\end{equation}
solves the eigenvalue equation \eqref{eq:init_diff_eq} and satisfies the boundary condition \eqref{eq:bc_b}. The $M$-function $M_\alpha(b;\cdot)$ is a meromorphic Herglotz--Nevanlinna function with poles at the eigenvalues of $\bH$. 

\begin{remark}\label{rem:M_asympt_compact}
The asymptotics of Proposition~\ref{prop:M_func_asympt} remains true for $M_\alpha(b;\lambda)$, independently of the choice of the boundary condition at $b$.
\end{remark}

\section{Uniqueness I: solutions to the Cauchy problem}
\label{sec.d}

\subsection{Overview}
In this section and in the next one, we show that the measure $\Sigma$ uniquely determines the matrix-valued potential $Q$;  this will prove Theorem~\ref{thm:a1}. Since $\Sigma$ uniquely determines the $M$-function $M_\alpha$, the proof reduces to showing that $M_\alpha$ determines $Q$ and the parameter $\alpha$. This is a generalisation of the Borg--Marchenko uniqueness theorem to the vector case. Such generalisations are known, see e.g.~\cite{And} or~\cite{GS}, but only in the case when $\epsilon$ in~\eqref{a0} is the identity matrix. 

We will retrace the classical proof of Borg--Marchenko's theorem, closely following the presentation of Bennewitz \cite{B}. As a first step, we need to establish the asymptotics of solutions $\Phi(x,\lambda)$ and $\Theta(x,\lambda)$ as $\abs{\lambda}\to\infty$ in the complex plane.  Let us explain the subtlety of this problem, which is specific to the case of sign indefinite $\epsilon$. 

Below, $P_+$ and $P_-$ are the matrices introduced in \eqref{eq:Ppm}. 
It is important to observe that these matrices are the projections onto the eigenspaces of $\epsilon$:
\[
\epsilon P_{+}=P_{+}\epsilon=P_{+}, \quad 
\epsilon P_{-}=P_{-}\epsilon=-P_{-}.
\]
In order to explain the heuristics, let us consider the case $Q\equiv0$. Then any solution $F$ to the eigenvalue equation
\[
-\epsilon F''=\lambda F, \quad \lambda=k^2,
\]
with fixed initial conditions $F(0)$, $F'(0)$ can be written as 
\begin{align}
F(x)=P_+F(0)\cos kx+P_-F(0)\cosh kx
+P_+F'(0)\frac{\sin kx}{k}+P_-F'(0)\frac{\sinh kx}{k}.
\label{eq:dd1}
\end{align}
Let $\abs{\lambda}\to\infty$ in the upper half-plane and let $k$ be in the first quadrant.  We observe that in the expression \eqref{eq:dd1} we have two competing leading order exponential terms:
\[
\ee^{kx} \quad \text{and} \quad \ee^{-\ii kx}.
\]
If $\Re k>\Im k$, the first one dominates, and if $\Re k<\Im k$, the second one dominates. Furthermore, in both cases the leading term of the asymptotics of $F(x)$ as $\abs{\lambda}\to\infty$ has a matrix coefficient which is non-invertible because of the presence of the projections $P_{\pm}$. However, for the standard proof of uniqueness that we follow, it is important to have invertibility of the leading term asymptotics of $\Phi(x,\lambda)$ and $\Theta(x,\lambda)$. This forces us to consider the borderline case $\Re k=\Im k$, i.e. $\lambda\to\infty$ in the upper half-plane along the imaginary axis. This motivates the main result of this section. Below $S$, $C$ are the matrices defined in \eqref{eq:a2b}.

\begin{theorem}\label{thm:d1}
Let $\Phi$, $\Theta$ be the solutions to the eigenvalue equation \eqref{eq:init_diff_eq} with the initial conditions~\eqref{eq:phi,theta_bc_cond} and let $\lambda=2\ii\kappa^2$ with $\kappa>0$. Then, as $\kappa\to\infty$, we have
\begin{align}
\ee^{-\kappa x}\,\Phi(x,\lambda)=&
\frac{1}{2}\left(\ee^{-\ii\kappa x}P_{+}+\ee^{\ii\kappa x}P_{-}\right)S\left(1+\calO_x\left(1/\kappa\right)\right)
\notag
\\
&-\frac{1}{2(\kappa+\ii\kappa)}\left(\ii\ee^{-\ii\kappa x}P_{+}-\ee^{\ii\kappa x}P_{-}\right)C\left(1+\calO_x\left(1/{\kappa}\right)\right),
\label{eq:dd2}
\\
\ee^{-\kappa x}\,\Theta(x,\lambda)=&
\frac{1}{2}\left(\ee^{-\ii\kappa x}P_{+}+\ee^{\ii\kappa x}P_{-}\right)C\left(1+\calO_x\left(1/{\kappa}\right)\right)
\notag
\\
&+\frac{1}{2(\kappa+\ii\kappa)}\left(\ii\ee^{-\ii\kappa x}P_{+}-\ee^{\ii\kappa x}P_{-}\right)S\left(1+\calO_x\left(1/{\kappa}\right)\right).
\label{eq:dd3}
\end{align}
\end{theorem}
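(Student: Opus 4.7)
The plan is to treat the potential $Q$ as a perturbation of the free problem $Q\equiv 0$ via a Volterra integral equation, and to iterate after factoring out the common exponential $\ee^{\kappa x}$. The sign-indefiniteness of $\epsilon$ produces two different wave numbers $(1\pm\ii)\kappa$ on the spectral projections $P_\pm$ of $\epsilon$, but both have real part $\kappa$, so a single factor $\ee^{-\kappa x}$ controls both growing modes simultaneously.

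First I would compute the free solutions explicitly. Since $\epsilon$, $P_+$, and $P_-$ pairwise commute, the equation $-\epsilon F''=\lambda F$ decouples on the eigenspaces of $\epsilon$, and the matrix ``cosine'' and ``sine'' solutions are
\[
 C_0(x,\lambda)=\cos\bigl((1+\ii)\kappa x\bigr)P_++\cos\bigl((1-\ii)\kappa x\bigr)P_-,
\]
\[
 S_0(x,\lambda)=\frac{\sin\bigl((1+\ii)\kappa x\bigr)}{(1+\ii)\kappa}P_++\frac{\sin\bigl((1-\ii)\kappa x\bigr)}{(1-\ii)\kappa}P_-,
\]
satisfying $C_0(0)=I$, $C_0'(0)=0$, $S_0(0)=0$, $S_0'(0)=I$. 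By the initial conditions \eqref{eq:phi,theta_bc_cond}, the free solutions are $\Phi_0=C_0 S-S_0\epsilon C$ and $\Theta_0=C_0 C+S_0\epsilon S$. Writing each trigonometric factor as a sum of complex exponentials, multiplying by $\ee^{-\kappa x}$, and using $P_\pm\epsilon=\pm P_\pm$, a direct computation shows that the growing modes reproduce exactly the two matrices appearing in \eqref{eq:dd2} and \eqref{eq:dd3}, while the decaying modes yield corrections of order $\ee^{-2\kappa x}$, which are dominated by $\calO_x(1/\kappa)$ for fixed $x>0$.

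Next I would convert the perturbed problem into a Volterra integral equation. A standard variation-of-constants argument (the Green kernel of $-\epsilon\partial_x^2-\lambda I$ for the initial value problem is $G_0(x,y,\lambda)=-S_0(x-y,\lambda)\epsilon$) gives
\[
 \Phi(x,\lambda)=\Phi_0(x,\lambda)+\int_0^x S_0(x-y,\lambda)\,\epsilon\, Q(y)\,\Phi(y,\lambda)\,\dd y,
\]
and an analogous formula for $\Theta$. Setting $\tilde\Phi(x):=\ee^{-\kappa x}\Phi(x,\lambda)$ and $\tilde S_0(u):=\ee^{-\kappa u}S_0(u,\lambda)$, this becomes $\tilde\Phi(x)=\tilde\Phi_0(x)+\int_0^x\tilde S_0(x-y)\,\epsilon\, Q(y)\,\tilde\Phi(y)\,\dd y$. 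The elementary bound $|\sin((1\pm\ii)\kappa u)|\leq\cosh(\kappa u)$ implies the uniform estimate $\|\tilde S_0(u)\|\leq C/\kappa$ for $u\geq 0$ and $\kappa\geq 1$. Hence the Volterra operator is a contraction of norm $\calO_x(\|Q\|_\infty/\kappa)$ on $L^\infty([0,x];\bbC^{2\times 2})$, the Neumann series converges for large $\kappa$, and $\|\tilde\Phi(x)-\tilde\Phi_0(x)\|=\calO_x(1/\kappa)$. Combining this with the explicit asymptotics of $\tilde\Phi_0$ yields \eqref{eq:dd2}, and the proof of \eqref{eq:dd3} is identical with $\Theta_0$ in place of $\Phi_0$.

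The main obstacle is the bookkeeping in the free step: since $S$, $C$, and $P_\pm$ do not commute in general (they commute only when $\alpha\in\bbR$), matching the precise subleading coefficient of $C$ in \eqref{eq:dd2} requires careful use of $P_+\epsilon=P_+$ and $P_-\epsilon=-P_-$, which flips the relative sign between the $P_+$ and $P_-$ contributions of $-S_0\epsilon C$ compared to those of $C_0S$. Apart from this, the sign-indefiniteness of $\epsilon$ is benign: it affects only the phases and not the magnitudes of the modes after extraction of $\ee^{\kappa x}$, so the contraction argument is entirely routine.
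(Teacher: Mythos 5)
Your proposal is correct in substance and follows essentially the same route as the paper: both reduce \eqref{eq:init_diff_eq} to a Volterra integral equation whose kernel is built from the free solutions, observe that along $\lambda=2\ii\kappa^2$ the two competing modes both grow exactly like $\ee^{\kappa x}$, factor out that exponential, and show that the resulting Volterra operator has norm $\calO_x(1/\kappa)$ (the paper runs a Gronwall--Bellman estimate, Lemma~\ref{lma.d4}, where you run a Neumann-series contraction; these are interchangeable). The one organisational difference is that the paper first proves the \emph{multiplicative} asymptotics \eqref{eq:d9}--\eqref{eq:d10} for the Neumann solutions $\Phi_0,\Theta_0$ (with the potential $Q$ present) and only then passes to general $\alpha$ through the exact identities \eqref{eq:phi-theta-by-neumann}, whereas you perturb directly off the free solution $C_0S-S_0\epsilon C$ for general $\alpha$. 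This matters in exactly one place: when $\alpha=\infty$ one has $S=0$, the leading term of $\ee^{-\kappa x}\Phi$ is itself of size $1/\kappa$, and your stated additive bound $\norm{\widetilde\Phi(x)-\widetilde\Phi_0(x)}=\calO_x(1/\kappa)$ is then too weak to produce the \emph{relative} error $\calO_x(1/\kappa)$ asserted in \eqref{eq:dd2}. The defect is in the statement, not the method: the same Neumann series bounds the error by $\calO_x(1/\kappa)\cdot\sup_{y\leq x}\norm{\widetilde\Phi_0(y)}$, which is $\calO_x(1/\kappa^2)$ in that case, i.e.\ the error is automatically relative to the size of the unperturbed solution; the paper obtains the analogous refinement through the extra factor $2/\abs{k}$ in estimate \eqref{d8} of Lemma~\ref{lma.d4}(ii). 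With that one observation made explicit, your argument is complete.
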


Of course, the second term in the right-hand side of \eqref{eq:dd2} is significant only if the first term vanishes, i.e. only if $\alpha=\infty$. Similarly, the second term in \eqref{eq:dd3} is significant only if $\alpha=0$. 
We emphasize that the matrices
\[
\ee^{-\ii\kappa x}P_++ \ee^{\ii\kappa x}P_-
\quad\mbox{ and }\quad
\ii\ee^{-\ii\kappa x}P_+- \ee^{\ii\kappa x}P_-
\]
are invertible; in fact, they are unitary. Of course, $S$ and $C$ are also invertible unless $\alpha=0$ or $\alpha=\infty$. 

In the next subsection we prepare some estimates of the Volterra equation that is equivalent to our differential equation \eqref{eq:init_diff_eq}. After that, the proof of Theorem~\ref{thm:d1} is given in Subsection~\ref{subsec:proof-asympt-thm}.

\subsection{Analysis of the Volterra equation}
Fix $k\in\bbC$, $k\not=0$; let $F$ be a $2\times 2$ matrix-valued solution to the equation
\begin{equation}
-\epsilon F''(x)+Q(x)F(x)=k^2F(x), \quad x>0,
\label{eq:d1}
\end{equation}
for some fixed initial conditions $F(0)$ and $F'(0)$. As it is standard, we convert \eqref{eq:d1} into the Volterra type integral equation:  
\begin{align}
F(x)=F_0(x)
&+\frac1k\int_0^x\sin k(x-y)P_+Q(y)F(y)\dd y
\notag
\\
&-\frac1k\int_0^x\sinh k(x-y)P_-Q(y)F(y)\dd y,
\label{eq:d2}
\end{align}
where $F_{0}(x)$ coincides with the right-hand side of~\eqref{eq:dd1}.
The Volterra equation \eqref{eq:d2} can be solved by the usual method of iterations, and one easily checks that any solution to \eqref{eq:d2} satisfies the differential equation \eqref{eq:d1}. 

The ensuing analysis of solutions of \eqref{eq:d1} is fairly standard and is probably present in the literature in various forms, so our presentation here will be brief. 
We need the following elementary proposition, which is an integral version of the Gronwall inequality, see e.g. \cite[Section IV.4]{BB}. It is sometimes called the Gronwall--Bellman lemma. 
\begin{lemma}\label{lma.d3}
Let $a=a(x)$ be a non-negative locally integrable function on $[0,\infty)$. Suppose that a non-negative function $h$ satisfies
\begin{equation}
h(x)\leq 1+\int_0^x a(y)h(y)\dd y
\label{d6}
\end{equation}
for all $x\geq0$. Then 
\begin{equation}
h(x)\leq \exp\left(\int_0^x a(y)\dd y\right)
\label{d7}
\end{equation}
for all $x\geq0$. 
\end{lemma}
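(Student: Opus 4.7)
The plan is the standard integrating-factor argument. Define the majorant
\[
H(x) := 1 + \int_{0}^{x} a(y) h(y) \, \dd y,
\]
so that the hypothesis \eqref{d6} reads simply $h(x) \leq H(x)$. Note that for \eqref{d6} to be a meaningful bound, the integral on the right is finite for every $x \geq 0$, so $ah \in L^{1}_{\loc}([0,\infty))$; consequently $H$ is absolutely continuous on $[0,\infty)$ with $H'(x) = a(x) h(x)$ almost everywhere. Combining $h \leq H$ with $a \geq 0$ converts the integral inequality into the differential inequality $H'(x) \leq a(x) H(x)$ a.e.

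Next, I would introduce the integrating factor
\[
\mu(x) := \exp\left(-\int_{0}^{x} a(y) \, \dd y\right),
\]
which is absolutely continuous with $\mu'(x) = -a(x)\mu(x)$. A direct computation shows $(\mu H)'(x) = \mu(x)\bigl(H'(x) - a(x) H(x)\bigr) \leq 0$ almost everywhere. Since $\mu H$ is absolutely continuous with $(\mu H)(0) = H(0) = 1$, integration from $0$ to $x$ yields $\mu(x) H(x) \leq 1$, i.e.\ $H(x) \leq \exp\bigl(\int_{0}^{x} a(y) \, \dd y\bigr)$. Since $h \leq H$, this is precisely \eqref{d7}.

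The only delicate point is the regularity observation that $ah$ is locally integrable; this follows automatically from the hypothesis that the right-hand side of \eqref{d6} is finite for every $x$. Beyond this, the proof is essentially textbook, and I would expect it to occupy only a few lines.
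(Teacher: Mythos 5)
Your proof is correct and is essentially the same argument as the paper's: the paper divides \eqref{d6} by the majorant $H(x)=1+\int_0^x a(y)h(y)\,\dd y$, multiplies by $a(x)$ and integrates the resulting logarithmic derivative, which is exactly your integrating-factor computation in a different guise. Your explicit remark on the local integrability of $ah$ and the absolute continuity of $H$ is a point the paper leaves implicit, but there is no substantive difference.
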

\begin{proof}
Rewrite \eqref{d6} as
\[
\frac{h(x)}{1+\int_0^x a(y)h(y)\dd y}\leq 1.
\]
Upon multiplying by $a(x)$, we recognise a logarithmic derivative on the left.  Integrating over $x$, we obtain 
\[
\log \left(1+\int_0^{x}a(y)h(y)\dd y\right)\leq \int_0^x a(y)\dd y.
\]
Exponentiating and using \eqref{d6}, we arrive at \eqref{d7}.
\end{proof}

\begin{lemma}\label{lma.d4}
Let $k\in\bbC$, $k\not=0$ and $\kappa=\max\{\abs{\Re k},\abs{\Im k}\}$. 
\begin{enumerate}[\rm (i)]
\item
Let $F$ be a solution to \eqref{eq:d1} with $F'(0)=0$. 
Then $F$ satisfies the estimate
\begin{equation}
\norm{F(x)-F_0(x)}
\leq
2\norm{F(0)}\ee^{\kappa x}\left[\exp\left(\frac2{\abs{k}}\int_0^x\norm{Q(y)}\dd y\right)-1\right],
\label{d6a}
\end{equation}
where 
\[
F_0(x)=P_+F(0)\cos kx+P_-F(0)\cosh kx. 
\]
\item
Let $F$ be a solution to \eqref{eq:d1} with $F(0)=0$. 
Then $F$ satisfies the estimate
\begin{equation}
\norm{F(x)-F_0(x)}
\leq
\frac{2}{\abs{k}}\norm{F'(0)}\ee^{\kappa x}\left[\exp\left(\frac2{\abs{k}}\int_0^x\norm{Q(y)}\dd y\right)-1\right],
\label{d8}
\end{equation}
where
\[
F_0(x)=P_+F'(0)\frac1k\sin kx+P_-F'(0)\frac1k\sinh kx. 
\]
\end{enumerate}
\end{lemma}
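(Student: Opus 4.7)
The plan is to treat (i) and (ii) in a unified way by analysing the Volterra equation \eqref{eq:d2}, which was already derived in the excerpt. The only substantive difference between the two parts will be the size of the inhomogeneous term $F_{0}$: the $F_{0}$ appearing in \eqref{eq:d2} degenerates to the case-(i) or case-(ii) form precisely when $F'(0)=0$ or $F(0)=0$, so both parts can be fed into the same estimation machine.

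First I would record the elementary pointwise bound that for $z\geq 0$,
\[
\abs{\cos kz},\ \abs{\sin kz}\leq \ee^{\abs{\Im k}\,z},\qquad \abs{\cosh kz},\ \abs{\sinh kz}\leq \ee^{\abs{\Re k}\,z},
\]
and hence all four are bounded by $\ee^{\kappa z}$ with $\kappa=\max\{\abs{\Re k},\abs{\Im k}\}$. Applied termwise to $F_{0}$ this gives $\norm{F_{0}(x)}\leq 2\norm{F(0)}\ee^{\kappa x}$ in case~(i) and $\norm{F_{0}(x)}\leq(2/\abs{k})\norm{F'(0)}\ee^{\kappa x}$ in case~(ii); applied to the kernel of \eqref{eq:d2} with $z=x-y\geq 0$ it yields $\abs{\sin k(x-y)}+\abs{\sinh k(x-y)}\leq 2\ee^{\kappa(x-y)}$. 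Setting $g(x):=\ee^{-\kappa x}\norm{F(x)}$, taking norms in \eqref{eq:d2} and multiplying by $\ee^{-\kappa x}$ then reduces it to the integral inequality
\[
g(x)\leq c_{0}+\frac{2}{\abs{k}}\int_{0}^{x}\norm{Q(y)}\,g(y)\,\dd y,
\]
with $c_{0}=2\norm{F(0)}$ in case~(i) and $c_{0}=(2/\abs{k})\norm{F'(0)}$ in case~(ii). Applying the Gronwall--Bellman lemma (Lemma~\ref{lma.d3}) to $g/c_{0}$ gives the a~priori bound
\[
\norm{F(x)}\leq c_{0}\,\ee^{\kappa x}\exp\!\left(\frac{2}{\abs{k}}\int_{0}^{x}\norm{Q(y)}\,\dd y\right).
\]

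To conclude, I would substitute this bound for $\norm{F(y)}$ back into the Volterra equation rewritten for the difference $F-F_{0}$, and once more use $\abs{\sin k(x-y)}+\abs{\sinh k(x-y)}\leq 2\ee^{\kappa(x-y)}$. The decisive observation here is that the resulting integrand in $y$ is, up to the constant prefactor $c_{0}\ee^{\kappa x}$, exactly the derivative in $y$ of $\exp\!\bigl(\tfrac{2}{\abs{k}}\int_{0}^{y}\norm{Q(z)}\,\dd z\bigr)$, so a single explicit integration produces the factor $\exp\!\bigl(\tfrac{2}{\abs{k}}\int_{0}^{x}\norm{Q(y)}\,\dd y\bigr)-1$ and yields precisely \eqref{d6a} in case~(i) and \eqref{d8} in case~(ii). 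I expect no genuine difficulty: the whole argument is textbook Gronwall, and the only mild obstacle is the bookkeeping of the factors of $\abs{k}$ and $\ee^{\kappa x}$, which the perfect-derivative trick in the last step handles cleanly.
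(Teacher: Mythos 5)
Your proposal is correct and follows essentially the same route as the paper: both pass to the Volterra equation \eqref{eq:d2}, use the same elementary bounds $\abs{\sin kz},\abs{\sinh kz},\abs{\cos kz},\abs{\cosh kz}\leq \ee^{\kappa z}$ on the kernel and on $F_0$, and invoke the Gronwall--Bellman Lemma~\ref{lma.d3} to arrive at the identical constants. The only organisational difference is that the paper applies Gronwall directly to $\ee^{-\kappa x}\norm{F(x)-F_0(x)}$ (via the normalisation $h=1+g/c_0$), whereas you first extract the a~priori bound $\norm{F(x)}\leq c_0\,\ee^{\kappa x}\exp\bigl(\tfrac{2}{\abs{k}}\int_0^x\norm{Q(y)}\,\dd y\bigr)$ and then integrate the perfect derivative once; both variants need the harmless remark that one may assume $c_0\neq0$, since otherwise $F\equiv0$ and the claim is trivial.
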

\begin{proof}
Denote 
\[
G(x):=F(x)-F_0(x); 
\]
then we can rewrite the  integral equation \eqref{eq:d2} as
\begin{align}
G(x)=&G_0(x)
+\frac1k\int_0^x\sin k(x-y)P_+Q(y)G(y)\dd y
\notag
\\
&-\frac1k\int_0^x\sinh k(x-y)P_-Q(y)G(y)\dd y,
\label{d4}
\end{align}
where
\[
G_0(x):=\frac1k\int_0^x\sin k(x-y)P_+Q(y)F_0(y)\dd y
-\frac1k\int_0^x\sinh k(x-y)P_-Q(y)F_0(y)\dd y.
\]
Consider the case (i). We have the elementary estimates
\[
\abs{\sin kx}\leq \ee^{\kappa x}, \quad 
\abs{\sinh kx}\leq \ee^{\kappa x},\quad 
\norm{F_0(x)}\leq 2\norm{F(0)}\ee^{\kappa x}.
\]
It follows that 
\[
\norm{G_0(x)}\leq \frac{4}{\abs{k}}\norm{F(0)}\int_0^x \ee^{\kappa(x-y)} \norm{Q(y)}\ee^{\kappa y}\dd y
=\ee^{\kappa x}\frac{4}{\abs{k}}\norm{F(0)}\int_0^x  \norm{Q(y)}\dd y.
\]
From \eqref{d4} we obtain
\begin{equation}
\norm{G(x)}\leq \norm{G_0(x)}+\frac{2}{\abs{k}}\int_0^x \ee^{\kappa(x-y)}\norm{Q(y)}\norm{G(y)}\dd y. 
\label{d5}
\end{equation}
Denote for brevity 
\begin{align*}
g(x):=\norm{G(x)}\ee^{-\kappa x}, 
\qquad
q_+(x):=\frac{2}{\abs{k}}\int_0^x \norm{Q(y)}\dd y.
\end{align*}
Then \eqref{d5} rewrites as 
\[
g(x)\leq 2\norm{F(0)}q_+(x)+\int_0^x q_{+}'(y)g(y)\dd y.
\]
Without loss of generality, we may assume that $F(0)\neq0$, since otherwise $F\equiv0$ and the claim holds trivially. Let us divide the last inequality by $2\norm{F(0)}$ and add $1$ to both sides. We obtain
\[
h(x)\leq 1+\int_0^x q_+'(y)h(y)\dd y, \;\mbox{ for }\; h(x):=1+\frac{g(x)}{2\norm{F(0)}}. 
\]
By Lemma~\ref{lma.d3}, we find
\[
h(x)\leq \ee^{q_+(x)}, \quad \text{ or }\quad
g(x)\leq 2\norm{F(0)}\left(\ee^{q_+(x)}-1\right),
\]
which rewrites as \eqref{d6a}.

In case (ii), the argument is analogous. We indicate the changes: 
the estimates for $F_0$ and $G_0$ will be 
\[
\norm{F_0(x)}\leq \frac{2}{\abs{k}}\norm{F'(0)}\ee^{\kappa x}, 
\quad
\norm{G_0(x)}\leq 
\ee^{\kappa x}\frac{4}{\abs{k}^2}\norm{F'(0)}\int_0^x  \norm{Q(y)}\dd y.
\]
Then, with the same notation for $g$ and $q_+$, we obtain 
\[
g(x)\leq \frac{2}{\abs{k}}\norm{F'(0)}q_+(x)+\int_0^x q_+'(y)g(y)\dd y,
\]
which leads to
\[
g(x)\leq \frac{2}{\abs{k}}\norm{F'(0)}\left(\ee^{q_+(x)}-1\right),
\]
which is equivalent to \eqref{d8}.
\end{proof}

\subsection{Proof of Theorem~\ref{thm:d1}}\label{subsec:proof-asympt-thm}

For notational convenience, we set $k=\kappa+\ii\kappa$ with $\kappa>0$. It will be convenient to start by considering the case of solutions corresponding to the Neumann boundary condition at $x=0$, i.e. $\alpha=0$; we denote these solutions by $\Phi_{0}$, $\Theta_{0}$. 

First consider $\Phi_{0}$.  By Lemma~\ref{lma.d4}(i), we have
\[
\norm{\Phi_{0}(x,\lambda)-P_{+}\cos kx-P_{-}\cosh kx}\leq \ee^{\kappa x}\,\calO_x(1/\kappa)
\]
for $\kappa\to\infty$. Taking also into account that 
\[
P_+\cos kx + P_-\cosh kx
=\frac12 \ee^{\kappa x}\left(\ee^{-\ii\kappa x}P_{+} +\ee^{\ii\kappa x}P_{-}\right)+\calO(\ee^{-\kappa x}),
\]
we find
\[
\left\|\Phi_{0}(x,\lambda)-\tfrac12 \ee^{\kappa x}\left(\ee^{-\ii\kappa x}P_+ +\ee^{\ii\kappa x}P_-\right)\right\|\leq \ee^{\kappa x}\,\calO_x(1/\kappa)
\]
as $\kappa\to\infty$. It follows that 
\begin{equation}
\Phi_{0}(x,\lambda)=\frac12\ee^{\kappa x}\left(\ee^{-\ii\kappa x}P_+ +\ee^{\ii\kappa x}P_-\right)\left(1+\calO_x\left(1/{\kappa}\right)\right),
\label{eq:d9}
\end{equation}
as $\kappa\to\infty$; here it is important that the matrix $(\ee^{-\ii\kappa x}P_+ -\ee^{\ii\kappa x}P_-)$
is invertible (in fact, unitary). In a similar way, for $\Theta_{0}$ we obtain 
\begin{equation}
\Theta_{0}(x,\lambda)=\frac{1}{2(\kappa+\ii\kappa)}\ee^{\kappa x}\left(\ii\ee^{-\ii\kappa x}P_+ -\ee^{\ii\kappa x}P_-\right)\left(1+\calO_x\left(1/{\kappa}\right)\right)
\label{eq:d10}
\end{equation}
as $\kappa\to\infty$. 
For a general parameter $\alpha\in\mathbb{C}\cup\{\infty\}$, one can express $\Phi$ and $\Theta$ in terms of $\Phi_{0}$ and $\Theta_{0}$ as 
\begin{equation}
\begin{aligned}
\Phi&=\Phi_{0} S-\Theta_{0} C,\\
\Theta&=\Phi_{0} C+\Theta_{0} S.
\end{aligned}
\label{eq:phi-theta-by-neumann}
\end{equation}
Substituting~\eqref{eq:d9} and~\eqref{eq:d10} into \eqref{eq:phi-theta-by-neumann} yields the required asymptotics. 
The proof of Theorem~\ref{thm:d1} is complete. 
\qed

\section{Uniqueness II: the main argument}
\label{sec.c}

In this section we complete the proof of Theorem~\ref{thm:a1}. 

\subsection{Decay of solutions along a non-real ray}
We recall that the solution $X$ is defined by \eqref{eq:X}. The purpose of this subsection is to prove the following auxiliary statement. 
\begin{theorem}\label{thm.cc1}
Let the solutions $\Phi$ and $X$ be as defined in Section~\ref{sec:a3}. 
Then for any $a>0$ we have 
\begin{align}
\Phi(a,\lambda)X(a,\overline{\lambda})^*
=X(a,\lambda)\Phi(a,\overline{\lambda})^*&\to0, 
\label{eq:c1}
\end{align}
as $\lambda\to\infty$ along any non-real ray.
\end{theorem}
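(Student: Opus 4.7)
The statement splits into two independent items: an algebraic identity valid for every non-real $\lambda$, and an asymptotic decay. The identity shows that $\Phi(a,\lambda)X(a,\overline{\lambda})^*$ and $X(a,\lambda)\Phi(a,\overline{\lambda})^*$ are literally equal, so it suffices to prove the decay for one of them, say the first.

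\textbf{The identity.} Expanding $X=\Theta-\Phi M_\alpha$ and using $M_\alpha(\overline{\lambda})^*=M_\alpha(\lambda)$ from~\eqref{eq:a4}, the $M_\alpha$-containing terms cancel pairwise and the equality reduces to
$Z(a):=\Phi(a,\lambda)\Theta(a,\overline{\lambda})^*-\Theta(a,\lambda)\Phi(a,\overline{\lambda})^*\equiv 0$.
I would prove this by introducing the three companions
$A=\Phi(\Theta')^*-\Theta(\Phi')^*$, $B=\Phi'\Theta^*-\Theta'\Phi^*$, and $U=\Phi'(\Theta')^*-\Theta'(\Phi')^*$
(throughout, unstarred factors are at $\lambda$ and starred factors at $\overline{\lambda}$). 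The ODEs $\Phi''=\epsilon(Q-\lambda)\Phi$ and $(\Phi(\cdot,\overline{\lambda})^*)''=\Phi(\cdot,\overline{\lambda})^*(Q-\lambda)\epsilon$ (and the analogues for $\Theta$) give after direct computation the closed linear matrix system
$Z'=A+B$, $A'=U+Z(Q-\lambda)\epsilon$, $B'=U+\epsilon(Q-\lambda)Z$, $U'=\epsilon(Q-\lambda)A+B(Q-\lambda)\epsilon$.
From~\eqref{eq:phi,theta_bc_cond}, the identities~\eqref{eq:S-C_id}, and the observation that $C^2$ is a scalar multiple of $I$, one computes $(Z(0),A(0),B(0),U(0))=(0,\epsilon,-\epsilon,0)$. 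Manifestly, the constant tuple $(0,\epsilon,-\epsilon,0)$ solves the system (the only non-trivial check is $U'=\epsilon(Q-\lambda)\epsilon-\epsilon(Q-\lambda)\epsilon=0$); by uniqueness for the linear system, $Z\equiv 0$.

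\textbf{Asymptotic decay on the positive imaginary ray.} Fix $\lambda=2\ii\kappa^2$ with $\kappa\to\infty$. Theorem~\ref{thm:d1} gives $\Phi(a,\lambda)=\calO(\ee^{\kappa a})$. The Volterra/Gronwall estimates of Lemma~\ref{lma.d4} depend only on $\kappa=\max\{|\Re k|,|\Im k|\}$, not on the signs of $\Re k, \Im k$, so exactly the same analysis (applied with $\overline{\lambda}$ and $\overline k$ in place of $\lambda, k$) yields analogous leading asymptotics for $\Phi(a,\overline{\lambda})$ and $\Theta(a,\overline{\lambda})$, both $\calO(\ee^{\kappa a})$. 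Combined with $M_\alpha(\overline{\lambda})^*=M_\alpha(\lambda)=\epsilon A+\calO(1/\kappa)$ from Proposition~\ref{prop:M_func_asympt} and the algebraic relation $\epsilon A\cdot S=C$ built into~\eqref{eq:a2b}, the leading $\calO(\ee^{\kappa a})$ contribution to $X(a,\overline{\lambda})^*=\Theta(a,\overline{\lambda})^*-M_\alpha(\lambda)\Phi(a,\overline{\lambda})^*$ cancels identically. The $L^2$-selection property of $X(\cdot,\overline{\lambda})$ upgrades this cancellation to the sharp estimate $X(a,\overline{\lambda})^*=\calO(\ee^{-\kappa a}/\kappa)$, most transparently via a Volterra representation for $X$ built on the decaying fundamental solutions of the free problem. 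Multiplying, $\|\Phi(a,\lambda)X(a,\overline{\lambda})^*\|=\calO(\ee^{\kappa a})\cdot\calO(\ee^{-\kappa a}/\kappa)=\calO(1/\kappa)\to 0$. The boundary case $\alpha=\infty$ is handled analogously using~\eqref{eq.Masymp2}.

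\textbf{Extension to arbitrary rays and main obstacle.} For an arbitrary non-real ray, one either repeats the Volterra analysis directly (uniformly over directions, since all estimates are controlled by $\max\{|\Re k|,|\Im k|\}$), or invokes a Phragm\'en--Lindel\"of argument in each open half-plane: $\lambda\mapsto\Phi(a,\lambda)X(a,\overline{\lambda})^*$ is analytic in the upper (resp.~lower) half-plane with polynomial growth coming from $M_\alpha$, and decays to zero along the positive (resp.~negative) imaginary ray, so Phragm\'en--Lindel\"of forces decay along every ray in that half-plane. The principal technical obstacle is the sharp bound $X(a,\overline{\lambda})^*=\calO(\ee^{-\kappa a}/\kappa)$: a naive use of the error terms in Theorem~\ref{thm:d1} only produces residuals of size $\calO(\ee^{\kappa a}/\kappa)$ after the leading cancellation, which gives boundedness but not decay of $\Phi X^*$. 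Making the true exponential decay of $X$ in $\kappa$ manifest requires either higher-order asymptotic bookkeeping or a Volterra integral equation for $X$ that foregrounds the decaying exponentials of the free problem—this is the genuinely non-trivial step of the argument and reflects the sign-indefinite nature of $\epsilon$.
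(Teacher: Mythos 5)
Your proof of the algebraic identity $\Phi(a,\lambda)X(a,\overline{\lambda})^*=X(a,\lambda)\Phi(a,\overline{\lambda})^*$ is fine: after the $M_\alpha$-terms cancel it reduces to $\Phi(a,\lambda)\Theta(a,\overline{\lambda})^*=\Theta(a,\lambda)\Phi(a,\overline{\lambda})^*$, which is identity \eqref{eq.X3} of the paper (proved there by Wronskians; your closed first-order system with initial data $(0,\epsilon,-\epsilon,0)$ also works). The decay part, however, has a genuine gap, and it sits exactly where you flag it. Everything rests on the sharp bound $X(a,\overline{\lambda})^*=\calO(\ee^{-\kappa a}/\kappa)$, and nothing in your argument delivers it: subtracting the leading terms of $\Theta(a,\overline{\lambda})^*$ and $M_\alpha(\lambda)\Phi(a,\overline{\lambda})^*$ via Theorem~\ref{thm:d1} leaves a residual of size $\calO(\ee^{\kappa a}/\kappa)$, so the product is only controlled by $\calO(\ee^{\kappa a})\cdot\calO(\ee^{\kappa a}/\kappa)=\calO(\ee^{2\kappa a}/\kappa)$, which does not even give boundedness (contrary to what you assert). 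Converting the qualitative fact $X(\cdot,\overline{\lambda})\in L^2$ into quantitative exponential decay at a fixed interior point $a$ is precisely the hard step; you defer it to ``higher-order bookkeeping or a Volterra equation built on the decaying exponentials'' without carrying it out. The proposed extension to arbitrary rays is also unsound as stated: $M_\alpha$, hence $\lambda\mapsto X(a,\overline{\lambda})^*$, is not analytic across $\bbR$, so Phragm\'en--Lindel\"of in a half-plane bounded by the real axis is unavailable, and in a sector between two non-real rays you would need control on both boundary rays, which you only have on the imaginary axis.

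The paper sidesteps the whole difficulty by a different route. From the constant Wronskian $[X(\cdot,\overline{\lambda})^*,\Phi(\cdot,\lambda)]=-I$ it derives (Lemmas~\ref{lma.cc2} and~\ref{lma.cc1}) the exact identity
\[
\Phi(a,\lambda)X(a,\overline{\lambda})^*=X(a,\lambda)\Phi(a,\overline{\lambda})^*=\epsilon\bigl(M_{*}(\lambda;a)+\wt{M}_{*}(\lambda;a)\bigr)^{-1}\epsilon,
\]
where $M_{*}(\lambda;a)=X'(a,\lambda)X(a,\lambda)^{-1}\epsilon$ is the Dirichlet $M$-function of the problem on $(a,\infty)$ and $\wt{M}_{*}(\lambda;a)=-\Phi'(a,\lambda)\Phi(a,\lambda)^{-1}\epsilon$ is the Dirichlet $M$-function of the reflected problem on $(0,a)$. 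By Proposition~\ref{prop:M_func_asympt} and Remark~\ref{rem:M_asympt_compact}, the sum equals $2k(\ii P_{+}+P_{-})+\smallO(k)$ as $\lambda=k^2\to\infty$ along any non-real ray, and since $\ii P_{+}+P_{-}$ is invertible the right-hand side is $\calO(1/k)\to0$. This trades the missing pointwise decay of $X$ for the known $\calO(k)$ growth of Dirichlet $M$-functions, and it handles all non-real rays at once. To salvage your approach you would have to prove the two-sided Volterra estimate for $X$ as a separate lemma; otherwise the $M$-function identity is the efficient fix.
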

By Proposition~\ref{prop:resolvent_kernel} below, the expression in the left-hand side of \eqref{eq:c1} is minus the resolvent kernel of $\bH$ on the diagonal $x=y=a$, and so the statement should not be surprising. However, we prefer to give a direct argument in terms of the asymptotics of the $M$-function. The proof below is inspired by the technique of introducing additional boundary condition at $x=a$, see e.g. \cite[Eq.~(3.19)]{GSimon}.

First we need some notation. We fix some $\lambda\in\bbC\setminus\bbR$. For $a>0$, let $M_{*}(\lambda;a)$ be the $M$-function for the operator $\bH$ but considered on the interval $(a,\infty)$ with the Dirichlet boundary condition at $x=a$. Furthermore, let $\wt{M}_{*}(\lambda;a)$ be the $M$-function for the operator
\begin{equation}
-\epsilon\frac{\dd^2}{\dd x^2}+Q(a-x)
\label{eq:c6}
\end{equation}
(note $a-x$ in place of $x$) on the interval $(0,a)$  with the Dirichlet boundary condition at $x=0$ and the boundary condition 
\[
CF(a)-S\epsilon F'(a)=0
\]
at $x=a$ (note the sign change in front of $F'$ compared to \eqref{eq:a2a}).

We recall that by Lemma~\ref{lma.nonsingular}, for any $\lambda\in\bbC\setminus\bbR$, the matrices $\Phi(a,\lambda)$ and $X(a,\lambda)$ are non-singular. 

\begin{lemma}\label{lma.cc2}
For any $\lambda\in\bbC\setminus\bbR$, one has
\begin{align}
M_{*}(\lambda;a)&= X'(a,\lambda)X(a,\lambda)^{-1}\epsilon,
\label{eq:cc3}
\\
\wt{M}_{*}(\lambda;a)&=-\Phi'(a,\lambda)\Phi(a,\lambda)^{-1}\epsilon.
\label{eq:cc4}
\end{align}
\end{lemma}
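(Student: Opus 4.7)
The plan is to compute both $M_{*}(\lambda;a)$ and $\wt{M}_{*}(\lambda;a)$ by constructing the Weyl-type solution associated with each auxiliary problem and then re-expressing it in terms of the solutions $\Phi$ and $X$ already available from Section~\ref{sec:a3}. A common feature of the two problems is that one of their endpoints carries a Dirichlet condition. Specialising \eqref{eq:a2b} and \eqref{eq:phi,theta_bc_cond} to $\alpha=\infty$ gives the Dirichlet normalisation $\Phi(0,\lambda)=0$, $\Phi'(0,\lambda)=-I$, $\Theta(0,\lambda)=\epsilon$, $\Theta'(0,\lambda)=0$, which will serve as initial data at the Dirichlet endpoint of each auxiliary problem.

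For~\eqref{eq:cc3}, let $\Phi_{*},\Theta_{*}$ denote the fundamental solutions of~\eqref{eq:init_diff_eq} on $(a,\infty)$ with the Dirichlet data above imposed at $x=a$, and set $X_{*}(x,\lambda):=\Theta_{*}(x,\lambda)-\Phi_{*}(x,\lambda)M_{*}(\lambda;a)$. Direct evaluation at $x=a$ gives $X_{*}(a,\lambda)=\epsilon$ and $X_{*}'(a,\lambda)=M_{*}(\lambda;a)$. The eigenvalue equation on $(a,\infty)$ is still limit-point at $+\infty$ by the argument of Lemma~\ref{lem:dim_S}, so its $L^{2}$-solution space is two-dimensional; by Lemma~\ref{lma.nonsingular}, the restrictions to $(a,\infty)$ of the columns of $X(\cdot,\lambda)$ are linearly independent at $x=a$ and hence form a basis of this space. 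Consequently $X_{*}(x,\lambda)=X(x,\lambda)K$ for a unique invertible matrix $K$; evaluating this identity and its first derivative at $x=a$ yields $K=X(a,\lambda)^{-1}\epsilon$ and then~\eqref{eq:cc3}.

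For~\eqref{eq:cc4}, we exploit the involution $y=a-x$. If $u$ solves the reflected equation $-\epsilon u''(x)+Q(a-x)u(x)=\lambda u(x)$ on $(0,a)$, then $v(y):=u(a-y)$ solves the original eigenvalue equation $-\epsilon v''+Qv=\lambda v$. Using $v(0)=u(a)$ and $v'(0)=-u'(a)$, one checks that the Dirichlet condition at $x=0$ becomes a Dirichlet condition at $y=a$ and the endpoint condition $Cu(a)-S\epsilon u'(a)=0$ at $x=a$ becomes the original boundary condition~\eqref{eq:a2a} at $y=0$. Let $\Phi_{\mathrm{rev}},\Theta_{\mathrm{rev}}$ be the fundamental solutions of the reflected equation with the Dirichlet data at $x=0$, and set $X_{\mathrm{rev}}(x,\lambda):=\Theta_{\mathrm{rev}}(x,\lambda)-\Phi_{\mathrm{rev}}(x,\lambda)\wt{M}_{*}(\lambda;a)$. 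Then $V(y):=X_{\mathrm{rev}}(a-y,\lambda)$ is a matrix-valued solution of the original equation satisfying \eqref{eq:a2a} at $y=0$, so $V(y)=\Phi(y,\lambda)N$ for a unique $2\times 2$ matrix $N$. Evaluating $X_{\mathrm{rev}}$ and $X_{\mathrm{rev}}'$ at $x=0$ yields $V(a)=\epsilon$ and $V'(a)=-\wt{M}_{*}(\lambda;a)$; combined with $V(a)=\Phi(a,\lambda)N$, $V'(a)=\Phi'(a,\lambda)N$ and the invertibility of $\Phi(a,\lambda)$ (Lemma~\ref{lma.nonsingular}), this gives~\eqref{eq:cc4}.

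The principal place where attention is required is the bookkeeping of signs and $\epsilon$-factors: the $\epsilon$ entering the Dirichlet normalisations, the identity $\Phi_{*}'(a,\lambda)=-I$ (obtained from $\epsilon\Phi_{*}'=-C=-\epsilon$ and $\epsilon^{2}=I$, and likewise for $\Phi_{\mathrm{rev}}'(0,\lambda)$), and the sign flip $v'(0)=-u'(a)$ produced by the chain rule in the reflection $y=a-x$. Once these are tracked, both identities reduce to linear algebra on the finite-dimensional solution spaces isolated above.
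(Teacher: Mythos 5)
Your proof is correct and takes essentially the same route as the paper: in both arguments the key step is to identify the Weyl solution of the auxiliary Dirichlet-at-one-endpoint problem with a constant right matrix multiple of $X(\cdot,\lambda)$ (for \eqref{eq:cc3}) or of the reflected $\Phi(\cdot,\lambda)$ (for \eqref{eq:cc4}), using the dimension lemma together with the nonsingularity statements of Lemma~\ref{lma.nonsingular}, and then to read off the $M$-function from the data at $x=a$, tracking the sign produced by $x\mapsto a-x$. The paper phrases this more tersely as $M=F'F^{-1}\epsilon$ for any admissible solution $F$ nonsingular at the relevant endpoint, but the substance is identical to yours.
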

\begin{proof}
In order to explain the idea of the proof, let us first consider our original spectral problem for the equation \eqref{eq:init_diff_eq}, but with the Dirichlet boundary condition at $x=0$, i.e. $\alpha=\infty$. Let $F$ be any $2\times2$ matrix solution to the equation \eqref{eq:init_diff_eq} which belongs to $L^2(\bbR_+)$. Suppose $F(0)$ is non-singular; then using the dimension lemma (Lemma~\ref{lem:dim_S}) and comparing initial conditions, we find that $X(x,\lambda)=F(x)F(0)^{-1}\epsilon$. It follows that 
\[
M_{\infty}(\lambda)= X'(0,\lambda)= F'(0)F(0)^{-1}\epsilon. 
\]
The previous argument can be applied to the problem on $(a,\infty)$ instead of $(0,\infty)$. This yields
\[
M_{*}(\lambda;a)= F'(a)F(a)^{-1}\epsilon,
\]
where $F$ is any square integrable solution with $F(a)$ non-singular. Now it suffices to take $F(x)=X(x,\lambda)$, and \eqref{eq:cc3} follows. 

Considering the problem for \eqref{eq:c6} on $(0,a)$ and applying the same logic, we obtain \eqref{eq:cc4}. One has to keep in mind the minus sign appearing in front of $F'$ because of the change of variable $x\mapsto a-x$. 
\end{proof}

\begin{lemma}\label{lma.cc1}
For any $\lambda\in\bbC\setminus\bbR$, we have 
\[
\Phi(a,\lambda)X(a,\overline{\lambda})^*
=X(a,\lambda)\Phi(a,\overline{\lambda})^*
=\epsilon\big(M_{*}(\lambda;a)+\wt{M}_{*}(\lambda;a)\big)^{-1}\epsilon.
\]
\end{lemma}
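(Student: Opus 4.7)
The plan is to reduce the lemma to a pair of ``mixed Wronskian'' identities for $\Phi$ and $\Theta$, which I would then establish by recognising an invariant of the associated first-order matrix system. Starting from Lemma~\ref{lma.cc2}, insert $\Phi(a,\lambda)\Phi(a,\lambda)^{-1}$ into the first summand of $M_{*}+\wt M_{*}$ to rewrite
\[
M_*(\lambda;a)+\wt M_*(\lambda;a)=\bigl[X'(a,\lambda)X(a,\lambda)^{-1}\Phi(a,\lambda)-\Phi'(a,\lambda)\bigr]\Phi(a,\lambda)^{-1}\epsilon.
\]
Inverting this factorisation (all three factors are non-singular by Lemma~\ref{lma.nonsingular}) and using $\epsilon^{2}=I$, the second equality of the lemma becomes equivalent to the single algebraic identity
\[
\bigl[X'(a,\lambda)X(a,\lambda)^{-1}\Phi(a,\lambda)-\Phi'(a,\lambda)\bigr]\,X(a,\overline\lambda)^{*}=\epsilon.
\]

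Next, substitute $X(x,\lambda)=\Theta(x,\lambda)-\Phi(x,\lambda)M_\alpha(\lambda)$ and use $M_\alpha(\overline\lambda)^{*}=M_\alpha(\lambda)$ from Proposition~\ref{prp.Mf}. A short computation shows that all $M_\alpha$-terms cancel, so that both equalities in the lemma reduce to proving the two pointwise identities
\begin{align*}
&\Phi(x,\lambda)\,\Theta(x,\overline\lambda)^{*}=\Theta(x,\lambda)\,\Phi(x,\overline\lambda)^{*},\\
&\Theta'(x,\lambda)\,\Phi(x,\overline\lambda)^{*}-\Phi'(x,\lambda)\,\Theta(x,\overline\lambda)^{*}=\epsilon,
\end{align*}
for every $x\geq 0$. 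The first identity directly yields the symmetry $\Phi(a,\lambda)X(a,\overline\lambda)^{*}=X(a,\lambda)\Phi(a,\overline\lambda)^{*}$ and also permits the replacement $X'X^{-1}\Phi\cdot X(a,\overline\lambda)^{*}=X'\cdot\Phi(a,\overline\lambda)^{*}$, after which the second identity gives exactly the displayed identity above.

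To prove the two identities, pass to the first-order $4\times 4$ system for the $\epsilon$-twisted fundamental matrix
\[
\mathbf{G}(x,\lambda):=\begin{pmatrix}\Phi(x,\lambda)&\Theta(x,\lambda)\\ \epsilon\Phi'(x,\lambda)&\epsilon\Theta'(x,\lambda)\end{pmatrix},
\qquad
\mathbf{G}'=\tilde A(x,\lambda)\,\mathbf{G},\quad
\tilde A=\begin{pmatrix}0&\epsilon\\ Q(x)-\lambda&0\end{pmatrix}.
\]
A one-line check using $\epsilon^{2}=I$ verifies the modified symplectic relation $\tilde A(x,\lambda)\,\mathcal{J}+\mathcal{J}\,\tilde A(x,\overline\lambda)^{*}=0$ for $\mathcal{J}=\bigl(\begin{smallmatrix}0&I\\ -I&0\end{smallmatrix}\bigr)$, which forces
\[
\mathcal{W}(x):=\mathbf{G}(x,\lambda)\,\mathcal{J}\,\mathbf{G}(x,\overline\lambda)^{*}
\]
to be constant in $x$. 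Evaluating at $x=0$ using \eqref{eq:phi,theta_bc_cond} together with $S^{2}+C^{2}=I$ and $SC=CS$ from \eqref{eq:S-C_id} yields $\mathcal{W}(0)=\mathcal{J}$, and the $(1,1)$- and $(2,1)$-block entries of $\mathcal{W}(x)=\mathcal{J}$ are precisely the two required identities.

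The main obstacle I anticipate is the bookkeeping imposed by the sign-indefinite $\epsilon$. The standard Wronskians \eqref{eq:wronsk_1}--\eqref{eq:wronsk_2} give products of the form $A^{*}\epsilon B'-A'^{*}\epsilon B$, whereas our target identities have the ``reversed'' shape $A\,B^{*}-B\,A^{*}$; moreover, the naive Hamiltonian reformulation of a Schr\"odinger system fails because $\epsilon$ does not commute with a generic matrix potential $Q(x)$. Identifying the correct $\epsilon$-twisted frame $\mathbf{G}$ and weight $\mathcal{J}$ so that the modified symplectic relation holds for \emph{every} bounded $Q$ is the conceptual heart of the argument.
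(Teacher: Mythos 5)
Your argument is correct, but it follows a genuinely different route from the paper's. The paper proves the lemma in a few lines by taking the Wronskian $W=[X(\cdot,\overline{\lambda})^*,\Phi(\cdot,\lambda)]=X(x,\overline{\lambda})^*\epsilon\Phi'(x,\lambda)-X'(x,\overline{\lambda})^*\epsilon\Phi(x,\lambda)$, which equals $-I$ directly from \eqref{eq:wronsk_1}--\eqref{eq:wronsk_2}, writing $\Phi(a,\lambda)X(a,\overline{\lambda})^*=-\bigl((X(a,\overline{\lambda})^*)^{-1}W\Phi(a,\lambda)^{-1}\bigr)^{-1}$, and identifying the inner expression with $-\epsilon(M_*+\wt M_*)\epsilon$ via Lemma~\ref{lma.cc2}; the second equality then comes from replacing $\lambda$ by $\overline{\lambda}$ and taking adjoints. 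You instead reduce both equalities to the ``reversed'' identities $\Phi(x,\lambda)\Theta(x,\overline{\lambda})^*=\Theta(x,\lambda)\Phi(x,\overline{\lambda})^*$ and $\Theta'(x,\lambda)\Phi(x,\overline{\lambda})^*-\Phi'(x,\lambda)\Theta(x,\overline{\lambda})^*=\epsilon$ --- these are exactly the identities \eqref{eq.X3}--\eqref{eq.X4} that the paper proves in the Appendix (by a solution-matching argument) for use in the resolvent-kernel formula --- and you establish them via the $4\times4$ first-order system and the conserved $\mathcal{J}$-form. I checked your reductions (the cancellation of the $M_\alpha$-terms, the replacement $X'X^{-1}\Phi\cdot X(a,\overline{\lambda})^*=X'\cdot\Phi(a,\overline{\lambda})^*$, the relation $\tilde A(x,\lambda)\mathcal{J}+\mathcal{J}\tilde A(x,\overline{\lambda})^*=0$, and $\mathcal{W}(0)=\mathcal{J}$) and they are all sound. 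The paper's route is shorter because the needed Wronskian is already in the ``adjoint-on-the-left'' form of \eqref{eq:wronsk_1}--\eqref{eq:wronsk_2}; yours is more self-contained in that it simultaneously re-derives the appendix identities by a symplectic-invariant argument that works for arbitrary bounded Hermitian $Q$.

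Two small points to tighten. First, the relation $\tilde A(\lambda)\mathcal{J}+\mathcal{J}\tilde A(\overline{\lambda})^*=0$ does not by itself force $\mathcal{W}(x)=\mathbf{G}(x,\lambda)\mathcal{J}\mathbf{G}(x,\overline{\lambda})^*$ to be constant: $\mathcal{W}$ satisfies $\mathcal{W}'=\tilde A(\lambda)\mathcal{W}+\mathcal{W}\tilde A(\overline{\lambda})^*$, and the symplectic relation only says that the constant function $\mathcal{J}$ is a solution of this linear ODE; you then need $\mathcal{W}(0)=\mathcal{J}$ together with uniqueness for linear ODEs (or, equivalently, first show that the sandwiched form $\mathbf{G}(x,\overline{\lambda})^*\mathcal{J}\mathbf{G}(x,\lambda)$ is genuinely constant and invert). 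Second, you ``invert the factorisation'' of $M_*+\wt M_*$ before knowing that the bracket $X'X^{-1}\Phi-\Phi'$ is invertible; the cleanest order is to prove the identity $\bigl[X'(a,\lambda)X(a,\lambda)^{-1}\Phi(a,\lambda)-\Phi'(a,\lambda)\bigr]X(a,\overline{\lambda})^*=\epsilon$ first (it needs no invertibility of the bracket), which then yields both the invertibility of $M_*+\wt M_*$ and the stated formula. Neither point is a gap, just bookkeeping.
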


\begin{proof}
Let us use the Wronskian
\[
W:=[X(\cdot,\overline{\lambda})^*,\Phi(\cdot,\lambda)]=
X(x,\overline{\lambda})^*\epsilon\Phi'(x,\lambda)-X'(x,\overline{\lambda})^*\epsilon\Phi(x,\lambda).
\]
It follows from definition~\eqref{eq:X} and formulas~\eqref{eq:wronsk_1} and \eqref{eq:wronsk_2} that $W=-I$. Now we write
\[
\Phi(a,\lambda)X(a,\overline{\lambda})^*
=
-\Phi(a,\lambda)W^{-1}X(a,\overline{\lambda})^*
=
-\left((X(a,\overline{\lambda})^*)^{-1}W\Phi(a,\lambda)^{-1}\right)^{-1},
\]
where $\Phi(a,\lambda)$ and $X(a,\overline{\lambda})$ are invertible by Lemma~\ref{lma.nonsingular}. By the definition of $W$ (substituting $x=a$) the matrix in the right-hand side can be transformed as
\[
(X(a,\overline{\lambda})^*)^{-1}W\Phi(a,\lambda)^{-1}
=
\epsilon\Phi'(a,\lambda)\Phi(a,\lambda)^{-1}
-
(X(a,\overline{\lambda})^*)^{-1}X'(x,\overline{\lambda})^*\epsilon.
\]
Using Lemma~\ref{lma.cc2} and symmetry \eqref{eq:a4}, this rewrites as
\[
(X(a,\overline{\lambda})^*)^{-1}W\Phi(a,\lambda)^{-1}
=
-\epsilon\big(\wt{M}_{*}(\lambda;a)+M_{*}(\lambda;a)\big)\epsilon.
\]
This proves the identity
\[
\Phi(a,\lambda)X(a,\overline{\lambda})^*
=\epsilon\big(M_{*}(\lambda;a)+\wt{M}_{*}(\lambda;a)\big)^{-1}\epsilon.
\]
By writing $\overline{\lambda}$ instead of $\lambda$ in the last equality, and taking adjoints, one gets
\[
X(a,\lambda)\Phi(a,\overline{\lambda})^*=\epsilon\big(M_{*}(\overline{\lambda};a)^{*}+\wt{M}_{*}(\overline{\lambda};a)^{*}\big)^{-1}\epsilon=\epsilon\big(M_{*}(\lambda;a)+\wt{M}_{*}(\lambda;a)\big)^{-1}\epsilon.
\]
The proof is complete. 
\end{proof}

\begin{proof}[Proof of Theorem~\ref{thm.cc1}]
It suffices to consider $\abs{\lambda}\to\infty$ along a ray in the upper half-plane. Then we can take $\lambda=k^2$ with $\Re k>0$ and $\Im k>0$. By \eqref{eq.Masymp2}, see also Remark~\ref{rem:M_asympt_compact}, we have
\[
M_{*}(\lambda;a)+\wt{M}_{*}(\lambda;a)
=2k(\ii P_++P_-)+\smallO(k).
\]
Now we make use of Lemma~\ref{lma.cc1} and the invertibility of the matrix $\ii P_{+}+P_{-}$ to complete the proof.
\end{proof}

\subsection{The boundary parameter $\alpha$ is uniquely determined}
We begin the proof of Theorem~\ref{thm:a1}. 
Let $\bH$ (resp. $\widehat{\bH}$) be the operator with a potential $Q$ (resp. $\widehat{Q}$) and boundary parameter $\alpha$ (resp. $\widehat{\alpha}$). We will use hats for various quantities associated with $\widehat{\bH}$, such as $\widehat\Phi$, $\widehat X$, $\widehat A$, $\widehat{M}_{\widehat{\alpha}}(\lambda)$ etc. We assume that the spectral measures of $\bH$ and $\widehat{\bH}$ coincide, i.e. $\widehat{\Sigma}=\Sigma$. 
Our first aim is to show that $\widehat{\alpha}=\alpha$ and $\widehat{M}_{\widehat{\alpha}}(\lambda)=M_\alpha(\lambda)$. 
\begin{lemma}
If $\widehat{\Sigma}=\Sigma$, then  $\widehat{\alpha}=\alpha$ and
$\widehat{M}_{\widehat{\alpha}}(\lambda)=M_\alpha(\lambda)$
for all $\lambda\in\bbC\setminus\bbR$.
\label{lem:Sasha-great-lemma}
\end{lemma}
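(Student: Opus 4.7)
The strategy is to combine the integral representation \eqref{eq:intres} with the high-energy asymptotics of Proposition~\ref{prop:M_func_asympt}. The equality $\widehat\Sigma=\Sigma$ forces $\widehat{M}_{\widehat\alpha}$ and $M_\alpha$ to differ by a constant Hermitian matrix, and the asymptotic expansion is rigid enough to force this constant to vanish and the boundary parameters to agree.

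\textbf{Step 1: Reduction to a constant difference.} Writing \eqref{eq:intres} for both operators and subtracting, the integrals cancel because $\widehat\Sigma=\Sigma$, leaving
\[
M_\alpha(\lambda)-\widehat{M}_{\widehat\alpha}(\lambda)=\Re M_\alpha(\ii)-\Re\widehat{M}_{\widehat\alpha}(\ii)=:C_0,
\]
a fixed $2\times 2$ matrix, for every $\lambda\in\bbC\setminus\bbR$.

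\textbf{Step 2: Ruling out the mixed case.} Take $\lambda=2\ii\kappa^2$ with $\kappa\to\infty$, so $k=\kappa+\ii\kappa$ and $|k|\to\infty$. By Proposition~\ref{prop:M_func_asympt}, $M_\alpha(\lambda)$ is bounded when $\alpha\neq\infty$ but grows linearly in $k$ when $\alpha=\infty$. If exactly one of $\alpha,\widehat\alpha$ equals $\infty$, the left-hand side of Step~1 would be unbounded along this ray, contradicting that it equals the constant $C_0$. Hence either both parameters are $\infty$ or both are finite.

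\textbf{Step 3: Infinite case.} If $\alpha=\widehat\alpha=\infty$, then \eqref{eq.Masymp2} gives $M_\infty(\lambda)-\widehat{M}_\infty(\lambda)=\calO(1/k)\to 0$, so $C_0=0$ and $M_\infty=\widehat{M}_\infty$ identically.

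\textbf{Step 4: Finite case, matching coefficients.} Assume $\alpha,\widehat\alpha\in\bbC$. Subtracting the two instances of \eqref{eq.Masymp1},
\[
C_0=(\epsilon A-\epsilon\widehat A)+\frac{|\alpha|^2-|\widehat\alpha|^2}{k}(\ii P_+-P_-)+\frac{1}{k^2}R+\calO(1/k^3),
\]
where $R=(1+|\alpha|^2)(\ii P_+-P_-)\epsilon A(\ii P_+-P_-)-(1+|\widehat\alpha|^2)(\ii P_+-P_-)\epsilon\widehat A(\ii P_+-P_-)$. Since the left-hand side is a constant matrix, the coefficient of $1/k$ must vanish. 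As the matrix $\ii P_+-P_-$ is invertible (indeed unitary), this forces $|\alpha|=|\widehat\alpha|$. With $|\alpha|=|\widehat\alpha|$, the $1/k^2$ coefficient simplifies to $(1+|\alpha|^2)(\ii P_+-P_-)(\epsilon A-\epsilon\widehat A)(\ii P_+-P_-)$, which must also vanish. Invertibility of $\ii P_+-P_-$ gives $\epsilon A=\epsilon\widehat A$, i.e.\ $\alpha=\widehat\alpha$ by \eqref{eq.Masymp4}. Back-substituting yields $C_0=0$, whence $M_\alpha=\widehat{M}_{\widehat\alpha}$.

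The main technical input is Proposition~\ref{prop:M_func_asympt}: we need enough terms of the expansion (through order $1/k^2$) to distinguish $\alpha$ from $\widehat\alpha$ once the modulus is already forced to agree. The sign-indefiniteness of $\epsilon$ is hidden inside the invertibility of $\ii P_+\pm P_-$, which is what ultimately makes each successive coefficient in the asymptotics recoverable.
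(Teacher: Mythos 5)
Your proof is correct, and it follows the same overall strategy as the paper: subtract the Herglotz representations to see that $M_\alpha-\widehat{M}_{\widehat\alpha}$ is a constant matrix, then match coefficients in the high-energy expansion of Proposition~\ref{prop:M_func_asympt}. The one genuine difference is how you obtain $\abs{\alpha}=\abs{\widehat{\alpha}}$ and rule out the mixed case $\{\alpha,\widehat\alpha\}$ containing exactly one $\infty$: the paper reads $\abs{\alpha}$ off the asymptotics of the measure itself (Theorem~\ref{thm:asympt_Sigma}, a forward reference to the Tauberian analysis of Section~\ref{sec.t}) and then uses the simplified representation \eqref{eq:intres1}, whose validity requires the convergence of $\int\dd\Sigma(t)/(1+\abs{t})$; you instead extract everything from the $M$-function asymptotics alone -- boundedness versus linear growth in $k$ eliminates the mixed case, and the vanishing of the $1/k$ coefficient (a scalar multiple of the invertible matrix $\ii P_+-P_-$) forces $\abs{\alpha}=\abs{\widehat{\alpha}}$. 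This makes your argument marginally more self-contained, as it needs only Proposition~\ref{prop:M_func_asympt} and the raw representation \eqref{eq:intres}, at the cost of carrying the unknown constant $C_0$ through one extra step before identifying it with $\epsilon A-\epsilon\widehat{A}$ and then with $0$. Both routes use the same decisive input, namely the $1/k^2$ term of \eqref{eq.Masymp1} together with the invertibility of $\ii P_+-P_-$ and the explicit form \eqref{eq.Masymp4}, to recover the argument of $\alpha$ once its modulus is known.
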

\begin{proof}
We first observe that from the high energy asymptotics of the measure $\Sigma$ in Theorem~\ref{thm:asympt_Sigma} we immediately read off $\abs{\alpha}$. Thus, we necessarily have $\abs{\alpha}=\abs{\widehat\alpha}$. We need to work a little harder to prove that the arguments of the complex numbers $\alpha$ and $\widehat{\alpha}$ coincide, too.

We start by observing that for $\alpha\not=\infty$, by virtue of Theorem~\ref{thm:asympt_Sigma}, the integral
\begin{equation}
\int_{-\infty}^\infty \frac{\dd\Sigma(t)}{1+\abs{t}}
\label{eq:x5}
\end{equation}
converges. It follows from~\eqref{eq.Masymp1} and the dominated convergence that the representation \eqref{eq:intres} simplifies to 
\begin{equation}
M_\alpha(\lambda)=
\epsilon A+\int_{-\infty}^\infty \frac{\dd\Sigma(t)}{t-\lambda}, \quad 
\lambda\in\bbC\setminus\bbR.
\label{eq:intres1}
\end{equation}
We consider separately three cases. 

\emph{Case 1: $\abs{\widehat{\alpha}}=\abs{\alpha}=0$.} This case can be subsumed into Case 2, but for clarity we consider it separately. We already have $\widehat{\alpha}=\alpha=0$, and so $\widehat{A}=A=0$. By \eqref{eq:intres1}, here we have $\widehat{M}_0=M_0$, and we are done. 

\emph{Case 2: $\abs{\widehat{\alpha}}=\abs{\alpha}$ is finite and non-zero. } Denote 
\[
N(\lambda)=\int_{-\infty}^\infty \frac{\dd\Sigma(t)}{t-\lambda}, \quad \lambda\in\bbC\setminus\bbR.
\]
Since $\Sigma=\widehat{\Sigma}$, by \eqref{eq:intres1} we have
\[
M_\alpha(\lambda)=\epsilon A+N(\lambda) 
\quad\text{ and }\quad 
\widehat{M}_{\widehat{\alpha}}(\lambda)=\epsilon\widehat{A}+N(\lambda),
\]
and therefore 
\[
M_\alpha(\lambda)-\widehat{M}_{\widehat{\alpha}}(\lambda)=\epsilon A-\epsilon\widehat{A},
\]
where the right hand side is constant. Now let us apply the asymptotic formula \eqref{eq.Masymp1} to both $M_\alpha$ and $\widehat{M}_{\widehat{\alpha}}$. Inspecting the terms of the order $1/k^2$ and taking into account the explicit expression \eqref{eq.Masymp4} for the matrix $(\ii P_{+}-P_{-})\epsilon A(\ii P_{+}-P_{-})$, we find that $\alpha=\widehat{\alpha}$. Now coming back to \eqref{eq:intres1}, we finally conclude that $M_\alpha=\widehat{M}_{\widehat{\alpha}}$. 

\emph{Case 3: $\widehat{\alpha}=\alpha=\infty$.}
Subtracting the representations \eqref{eq:intres} for $M_\infty$ and $\widehat{M}_{\infty}$, we find
\[
\widehat{M}_{\infty}(\lambda)-M_\infty(\lambda)=\widehat{M}_{\infty}(\ii)-M_\infty(\ii),
\]
where the right hand side is constant. Now using the asymptotics \eqref{eq.Masymp2}, we see that this constant must vanish. From here we find that $M_\infty=\widehat{M}_{\infty}$.
\end{proof}

In the rest of this section we prove that $Q=\widehat{Q}$. 

\subsection{The use of Phragm{\' e}n--Lindel{\" o}f principle}
\begin{lemma}
For all $\lambda\in\bbC$ and all $x\geq0$ we have 
\begin{equation}
\Theta(x,\lambda)\widehat{\Phi}(x,\overline{\lambda})^*
=
\Phi(x,\lambda)\widehat{\Theta}(x,\overline{\lambda})^*.
\label{eq:c4}
\end{equation}
\end{lemma}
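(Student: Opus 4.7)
The plan is a Bennewitz-style Phragm\'en--Lindel\"of argument applied to the quantity whose vanishing is the claim, namely
\[
D(x,\lambda) := \Theta(x,\lambda)\widehat{\Phi}(x,\overline{\lambda})^* - \Phi(x,\lambda)\widehat{\Theta}(x,\overline{\lambda})^*.
\]
For each fixed $x \geq 0$, $\lambda \mapsto D(x,\lambda)$ is entire: $\Theta(x,\cdot)$ and $\Phi(x,\cdot)$ are entire, and $\widehat{\Phi}(x,\overline{\lambda})^*$, $\widehat{\Theta}(x,\overline{\lambda})^*$ are entire in $\lambda$ as well --- expanding $\widehat{\Phi}(x,\mu) = \sum_n c_n(x) \mu^n$ gives $\widehat{\Phi}(x,\overline{\lambda})^* = \sum_n c_n(x)^* \lambda^n$. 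The Volterra bound in Lemma~\ref{lma.d4} shows each factor grows at most like $\ee^{|k|x}$ in $k = \sqrt{\lambda}$, so $k \mapsto D(x, k^2)$ is entire of exponential type at most $2x$.

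First, I would invoke Lemma~\ref{lem:Sasha-great-lemma} to get $\widehat{\alpha} = \alpha$ and a common $M := M_\alpha = \widehat{M}_{\widehat\alpha}$, and combine it with the Herglotz symmetry $M(\overline{\lambda})^* = M(\lambda)$ from \eqref{eq:a4}. Substituting $\Theta = X + \Phi M$ and $\widehat{\Theta} = \widehat{X} + \widehat{\Phi} M$ into the definition of $D$, the two cross terms involving $M$ cancel and yield
\[
D(x,\lambda) = X(x,\lambda)\widehat{\Phi}(x,\overline{\lambda})^* - \Phi(x,\lambda)\widehat{X}(x,\overline{\lambda})^*
\qquad (\lambda \in \bbC\setminus\bbR).
\]
The idea now is to exploit the decay of $X,\widehat{X}$ in this expression for large non-real $\lambda$.

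The key step is to show $D(x,\lambda) \to 0$ as $\lambda \to \infty$ along the ray $\lambda = 2\ii\kappa^2$, $\kappa \to +\infty$, and symmetrically along $\lambda = -2\ii\kappa^2$. By Theorem~\ref{thm:d1}, the leading asymptotics of $\Phi$ and $\widehat{\Phi}$ coincide --- they depend only on the common boundary parameter $\alpha$, not on the potential. An analogous statement for $X$ and $\widehat{X}$ follows either by adapting the resolvent-kernel computation of Lemma~\ref{lma.cc1} to the cross setting, or directly from $X = \Theta - \Phi M$ combined with Proposition~\ref{prop:M_func_asympt}. Hence the leading $\calO(1)$ terms in $X\widehat{\Phi}^*$ and $\Phi\widehat{X}^*$ match and cancel in the difference, producing $D(x,\lambda) = \calO(1/\kappa)$.

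Finally, I would apply Phragm\'en--Lindel\"of to the entry-wise components of $k \mapsto D(x,k^2)$ in the sector $\arg k \in (\pi/4, 3\pi/4)$. This sector has opening $\pi/2$, on its two boundary rays $D$ tends to zero (the second ray being covered by the symmetry $D(x,\overline{\lambda})^*$ which corresponds to swapping the two operators), and inside the function has order $1$ and type $\leq 2x < \infty$. Phragm\'en--Lindel\"of therefore forces $D(x,k^2) \equiv 0$ in the sector, hence for all $k \in \bbC$ by analyticity, yielding \eqref{eq:c4}. The main obstacle is the cancellation in the crucial decay step: Theorem~\ref{thm.cc1} addresses only the single-operator product $\Phi X^*$, so adapting the $M$-function argument of Lemma~\ref{lma.cc1} to the mixed-potential products $\Phi\widehat{X}^*$ and $X\widehat{\Phi}^*$ --- where $Q$ and $\widehat{Q}$ appear asymmetrically --- is where the real work lies, although the universality of the asymptotics in Proposition~\ref{prop:M_func_asympt} with respect to $Q$ makes this tractable.
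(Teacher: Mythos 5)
Your overall strategy coincides with the paper's: use Lemma~\ref{lem:Sasha-great-lemma} and the symmetry \eqref{eq:a4} to reduce the claim to the decay of $X(x,\lambda)\widehat{\Phi}(x,\overline{\lambda})^*-\Phi(x,\lambda)\widehat{X}(x,\overline{\lambda})^*$ along the imaginary $\lambda$-axis, then conclude by Phragm\'en--Lindel\"of and Liouville (the order-$1/2$ bound from Lemma~\ref{lma.d4} is exactly what the paper uses). But the crucial decay step is left with a genuine gap, and you say so yourself. Neither of your two proposed fixes works as stated. The direct route ``$X=\Theta-\Phi M$ plus Proposition~\ref{prop:M_func_asympt}'' cannot produce the asymptotics of $X$: along $\lambda=2\ii\kappa^{2}$ the solutions $\Theta$ and $\Phi M$ each grow like $\ee^{\kappa x}$ and are known only up to relative error $\calO(1/\kappa)$, while $X$ is their exponentially small difference; the error terms swamp $X$ entirely, so no asymptotics for $X$ (and hence no $\calO(1)$ ``leading term'' of $\Phi\widehat{X}^*$) can be extracted this way. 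The other route, ``adapting Lemma~\ref{lma.cc1} to the mixed products,'' also fails to adapt: that lemma rests on the constancy of the Wronskian $[X(\cdot,\overline{\lambda})^*,\Phi(\cdot,\lambda)]$, and the mixed Wronskian $[\widehat{X}(\cdot,\overline{\lambda})^*,\Phi(\cdot,\lambda)]$ is \emph{not} constant in $x$ when $Q\neq\widehat{Q}$, which is precisely the asymmetry you flag as ``the real work.''

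The paper closes this gap without any mixed-potential $M$-function computation, by a multiplicative factorization you have all the ingredients for but do not assemble: write
\[
X(x,\lambda)\widehat{\Phi}(x,\overline{\lambda})^*
=\bigl(X(x,\lambda)\Phi(x,\overline{\lambda})^*\bigr)\,
\bigl(\widehat{\Phi}(x,\overline{\lambda})\Phi(x,\overline{\lambda})^{-1}\bigr)^{*},
\]
and likewise for $\Phi\widehat{X}^*$. The first factor tends to $0$ along any non-real ray by Theorem~\ref{thm.cc1} (the single-potential statement, which is already proved), while the second tends to $I$ along $\lambda=\pm\ii\kappa$ because by Theorem~\ref{thm:d1} the leading terms in \eqref{eq:dd2} are invertible matrices depending only on $\alpha$, not on the potential --- this is exactly why the ray $\arg\lambda=\pi/2$ was singled out. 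Hence each mixed product tends to $0$ individually; there is no nontrivial cancellation of matching $\calO(1)$ leading terms to arrange, contrary to what your sketch suggests. With this substitution your argument becomes the paper's proof; the remaining quibble is only that Phragm\'en--Lindel\"of by itself yields boundedness in each sector, and one must still cover the whole plane and invoke Liouville together with the decay on the boundary rays to conclude that the entire function vanishes identically.
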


\begin{proof}
Let us write \eqref{eq:c1}, with and without hats, as follows:
\begin{equation}
X(x,\lambda)\Phi(x,\overline{\lambda})^*\to0
\quad\mbox{ and }\quad
\widehat{\Phi}(x,\lambda)\widehat{X}(x,\overline{\lambda})^*\to0,
\label{eq:dd4}
\end{equation}
as $\lambda\to\infty$ along any non-real ray. 

Let us take $\lambda=\ii\kappa$  with $\kappa>0$. By Theorem~\ref{thm:d1}, taking into account that the leading terms in \eqref{eq:dd2}  are invertible matrices independent on the potentials, we find the limit
\[
\widehat{\Phi}(x,\ii\kappa)\Phi(x,\ii\kappa)^{-1}\to I,
\]
as $\kappa\to\infty$. 
In the same way (or by using the symmetries \eqref{eq:a7} below) one can check that the same relation holds as $\kappa\to-\infty$. 
Applying this to \eqref{eq:dd4}, we find 
\[
X(x,\lambda)\widehat{\Phi}(x,\overline{\lambda})^*\to0
\quad\mbox{ and }\quad
\Phi(x,\lambda)\widehat{X}(x,\overline{\lambda})^*\to0,
\]
with $\lambda=\pm \ii\kappa$ and $\kappa\to\infty$.
Recalling the definition~\eqref{eq:X} of $X$ and $\widehat{X}$, this rewrites as
\begin{align*}
\left(\Theta(x,\lambda)-\Phi(x,\lambda)M_\alpha(\lambda)\right)\widehat{\Phi}(x,\overline{\lambda})^*&\to0, 
\\
\Phi(x,\lambda)(\widehat\Theta(x,\overline{\lambda})-\widehat{\Phi}(x,\overline{\lambda})\widehat{M}_{\widehat{\alpha}}(\overline{\lambda}))^*&\to0. 
\end{align*}
Subtracting the last two identities and using that $\widehat{M}_{\widehat{\alpha}}=M_\alpha$, see Lemma~\ref{lem:Sasha-great-lemma}, together with~\eqref{eq:a4}, we find that the terms with the $M$-function cancel out, and we obtain
\begin{equation}
\Theta(x,\lambda)\widehat{\Phi}(x,\overline{\lambda})^*
-
\Phi(x,\lambda)\widehat{\Theta}(x,\overline{\lambda})^*
\to0
\label{eq:c3}
\end{equation}
with $\lambda=\pm\ii\kappa$ and $\kappa\to\infty$. Observe that the left-hand side of \eqref{eq:c3} is an entire function in $\lambda$ which is of order $1/2$, as it follows from Lemma~\ref{lma.d4}. Now we can apply the Phragm{\' e}n--Lindel{\" o}f principle separately to the right half-plane $\Re \lambda\geq0$ and to the left half-plane $\Re \lambda\leq0$ (regarding them as sectors with opening $\pi$). This shows that our entire function is bounded in both half-planes and therefore, by the Liouville theorem, it is constant. Using \eqref{eq:c3} once again, we conclude that our function vanishes identically. This yields the required identity \eqref{eq:c4}.
\end{proof}

\subsection{Concluding the argument}
The ensuing algebra is less obvious than in the scalar case. 
We will mainly follow the argument of \cite{And} (which is a matrix generalisation of \cite{B}). In what follows, in the proofs we will take $\lambda$ non-real and freely use the inverses of $\Phi(x,\lambda)$ and $\widehat{\Phi}(x,\lambda)$; they exist by Lemma~\ref{lma.nonsingular}. After the required identities have been proved for non-real $\lambda$, they extend to real $\lambda$ by analyticity.

\begin{lemma}
For all $\lambda\in\bbC$ and all $x\geq0$ we have
\begin{equation}
\Phi(x,\overline{\lambda})^{*}\epsilon\Phi(x,\lambda)=\widehat{\Phi}(x,\overline{\lambda})^{*}\epsilon\widehat{\Phi}(x,\lambda).
\label{eq:c7}
\end{equation}
\end{lemma}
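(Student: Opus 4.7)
The plan is to rearrange \eqref{eq:c4} into a multiplicative form $\Theta=\Phi T$ and then recover the matrix $\Phi(x,\overline{\lambda})^{*}\epsilon\Phi(x,\lambda)$ from the derivative of $T$ by means of a Wronskian identity. Throughout, I work with $\lambda\in\bbC\setminus\bbR$ and $x>0$, so that Lemma~\ref{lma.nonsingular} guarantees the invertibility of $\Phi(x,\lambda)$, $\widehat{\Phi}(x,\lambda)$, and of the corresponding matrices at $\overline{\lambda}$. At the end I extend to $x=0$ by continuity and to real $\lambda$ by analyticity, since both sides of \eqref{eq:c7} are entire in~$\lambda$.

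First I would solve \eqref{eq:c4} for $\Theta$: since $\widehat{\Phi}(x,\overline{\lambda})^{*}$ is invertible,
\[
\Theta(x,\lambda)=\Phi(x,\lambda)T(x,\lambda),\qquad T(x,\lambda):=\bigl[\widehat{\Phi}(x,\overline{\lambda})^{-1}\widehat{\Theta}(x,\overline{\lambda})\bigr]^{*}.
\]
Plugging $\Theta=\Phi T$ into the Wronskian identity $[\Phi(\cdot,\overline{\lambda})^{*},\Theta(\cdot,\lambda)]=I$ from \eqref{eq:wronsk_2} and simplifying with $[\Phi(\cdot,\overline{\lambda})^{*},\Phi(\cdot,\lambda)]=0$ from \eqref{eq:wronsk_1} (which cancels the terms coming from differentiating $\Phi$) leaves only
\[
\Phi(x,\overline{\lambda})^{*}\epsilon\,\Phi(x,\lambda)\,T'(x,\lambda)=I,
\]
so $T'(x,\lambda)=\bigl[\Phi(x,\overline{\lambda})^{*}\epsilon\Phi(x,\lambda)\bigr]^{-1}$.

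Next I would repeat the same computation with the hatted data, applied to the identity $[\widehat{\Phi}(\cdot,\overline{\lambda})^{*},\widehat{\Theta}(\cdot,\lambda)]=I$. Setting $\widehat{T}(x,\lambda):=\widehat{\Phi}(x,\lambda)^{-1}\widehat{\Theta}(x,\lambda)$, the very same manipulation gives
\[
\widehat{T}'(x,\lambda)=\bigl[\widehat{\Phi}(x,\overline{\lambda})^{*}\epsilon\widehat{\Phi}(x,\lambda)\bigr]^{-1}.
\]
Now the key observation is that, by construction, $T(x,\lambda)=\widehat{T}(x,\overline{\lambda})^{*}$; differentiating in $x$ yields $T'(x,\lambda)=\widehat{T}'(x,\overline{\lambda})^{*}$. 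Comparing the two expressions for $T'$ above and using $\epsilon^{*}=\epsilon$,
\[
\bigl[\Phi(x,\overline{\lambda})^{*}\epsilon\Phi(x,\lambda)\bigr]^{-1}=\bigl[\widehat{\Phi}(x,\overline{\lambda})^{*}\epsilon\widehat{\Phi}(x,\lambda)\bigr]^{-1},
\]
and taking inverses proves \eqref{eq:c7} for non-real $\lambda$ and $x>0$. Extension to $x=0$ is immediate from continuity and coincidence of the Cauchy data (which relies on $\alpha=\widehat{\alpha}$, already established in Lemma~\ref{lem:Sasha-great-lemma}), and extension to real $\lambda$ follows from the joint entirety of both sides in $\lambda$.

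The only point requiring care is the invertibility of $\widehat{\Phi}(x,\overline{\lambda})$ and of the sesquilinear form $\Phi(x,\overline{\lambda})^{*}\epsilon\Phi(x,\lambda)$, both of which are granted for non-real $\lambda$ and $x>0$ by Lemma~\ref{lma.nonsingular} and by the fact that $T'$ itself is well defined. I expect the main conceptual obstacle to be spotting that the product form $\Theta=\Phi T$ recovers exactly the quantity $\Phi(x,\overline{\lambda})^{*}\epsilon\Phi(x,\lambda)$ through the Wronskian $[\Phi(\cdot,\overline{\lambda})^{*},\Theta(\cdot,\lambda)]=I$; once this algebraic identity is in place, the symmetry $T(x,\lambda)=\widehat{T}(x,\overline{\lambda})^{*}$ does all the remaining work.
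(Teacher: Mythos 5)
Your proposal is correct and follows essentially the same route as the paper: both arguments reduce \eqref{eq:c4} to the statement that $\bigl(\Phi(x,\lambda)^{-1}\Theta(x,\lambda)\bigr)'$ equals $\bigl[\Phi(x,\overline{\lambda})^{*}\epsilon\Phi(x,\lambda)\bigr]^{-1}$ via the Wronskian identities \eqref{eq:wronsk_1}--\eqref{eq:wronsk_2}, apply the same identity to the hatted data after the substitution $\lambda\mapsto\overline{\lambda}$ and an adjoint, and then identify the two inverses. The only difference is cosmetic (you package $\Phi^{-1}\Theta$ as the matrix $T$ defined from the hatted solutions before differentiating, whereas the paper derives the derivative identity first and invokes \eqref{eq:c4} afterwards), and your handling of invertibility, the endpoint $x=0$, and the analytic continuation to real $\lambda$ matches the paper's.
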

\begin{proof}
As discussed, initially we take $\Im \lambda\not=0$. Bearing in mind the initial conditions~\eqref{eq:phi,theta_bc_cond}, it is also sufficient to assume $x>0$.

The first vanishing Wronskian from~\eqref{eq:wronsk_1} rewrites as 
\[
\epsilon\Phi'(x,\lambda)\Phi(x,\lambda)^{-1}=\left(\Phi(x,\overline{\lambda})^{*}\right)^{-1}\Phi'(x,\overline{\lambda})^{*}\epsilon.
\]
Applying this identity in the following computation, we find 
\begin{align*}
\Phi(x,\overline{\lambda})^{*}\epsilon\Phi(x,\lambda)&\left(\Phi(x,\lambda)^{-1}\Theta(x,\lambda)\right)'\\
&=-\Phi(x,\overline{\lambda})^{*}\epsilon\Phi'(x,\lambda)\Phi(x,\lambda)^{-1}\Theta(x,\lambda)+\Phi(x,\overline{\lambda})^{*}\epsilon\Theta'(x,\lambda)\\
&=-\Phi'(x,\overline{\lambda})^{*}\epsilon\Theta(x,\lambda)+\Phi(x,\overline{\lambda})^{*}\epsilon\Theta'(x,\lambda).
\end{align*}
Using also~\eqref{eq:wronsk_2}, we obtain the identity 
\begin{equation}
\Phi(x,\overline{\lambda})^{*}\epsilon\Phi(x,\lambda)\left(\Phi(x,\lambda)^{-1}\Theta(x,\lambda)\right)'=I.
\label{eq:expr_id1}
\end{equation}
Then we also have
\[
\left(\Phi(x,\lambda)^{-1}\Theta(x,\lambda)\right)'\Phi(x,\overline{\lambda})^{*}\epsilon\Phi(x,\lambda)=I.
\]
Of course, the same identity is true for the solutions with hats. Replacing $\lambda$ by $\overline{\lambda}$ and taking adjoints in this identity yields
\begin{equation}
\widehat{\Phi}(x,\overline{\lambda})^{*}\epsilon\widehat{\Phi}(x,\lambda)\left(\widehat{\Theta}(x,\overline{\lambda})^{*}(\widehat{\Phi}(x,\overline{\lambda})^{*})^{-1}\right)'=I.
\label{eq:expr_id2}
\end{equation}

Now we return to \eqref{eq:c4} and write it as
\[
\Phi(x,\lambda)^{-1}\Theta(x,\lambda)=\widehat{\Theta}(x,\overline{\lambda})^*\big(\widehat{\Phi}(x,\overline{\lambda})^{*}\big)^{-1}.
\]
When combined with~\eqref{eq:expr_id1} and~\eqref{eq:expr_id2}, we deduce~\eqref{eq:c7}.
\end{proof}

\begin{lemma}
For all $\lambda\in\bbC$ and all $x\geq0$ one has
\begin{equation}
\Phi(x,\lambda)=\widehat{\Phi}(x,\lambda).
\label{eq:c13}
\end{equation}
\end{lemma}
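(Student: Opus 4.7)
My plan is to introduce the matrix-valued function
\[
W(x,\lambda) := \widehat{\Phi}(x,\lambda)\Phi(x,\lambda)^{-1},
\]
well-defined for $\lambda\in\bbC\setminus\bbR$ and $x>0$ by Lemma~\ref{lma.nonsingular}, and to prove $W\equiv I$. Since $\Phi(x,\cdot)$ and $\widehat{\Phi}(x,\cdot)$ are entire, the identity $\widehat{\Phi}=\Phi$ then extends from $\bbC\setminus\bbR$ to all of $\bbC$ by analyticity.

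The first step is to upgrade~\eqref{eq:c7} to the statement that the two quotients $N(x,\lambda):=\Phi(x,\lambda)^{-1}\Theta(x,\lambda)$ and $\widehat{N}(x,\lambda):=\widehat{\Phi}(x,\lambda)^{-1}\widehat{\Theta}(x,\lambda)$ coincide for all $x>0$. Identity~\eqref{eq:expr_id1} yields $N'(x,\lambda) = \bigl[\Phi(x,\overline{\lambda})^{*}\epsilon\Phi(x,\lambda)\bigr]^{-1}$, and likewise for $\widehat{N}'$; by~\eqref{eq:c7} these derivatives coincide, so $N-\widehat{N}$ is constant in $x$. For $\alpha\neq\infty$, evaluation at $x=0$ gives $N(0)=S^{-1}C=\widehat{N}(0)$ (using $\widehat{\alpha}=\alpha$), so the constant is zero. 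For $\alpha=\infty$, $N$ and $\widehat{N}$ blow up at the origin, so instead I exploit the decomposition $N(x,\lambda) = \Phi(x,\lambda)^{-1}X(x,\lambda) + M_{\alpha}(\lambda)$ (and its hatted analogue) together with the limit $\Phi(x,\lambda)^{-1}X(x,\lambda)\to 0$ as $x\to\infty$ along non-real rays and $M_{\alpha}=\widehat{M}_{\widehat{\alpha}}$ from Lemma~\ref{lem:Sasha-great-lemma} to force the constant to vanish.

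With $N=\widehat{N}$ in hand, both $\widehat{\Phi}=W\Phi$ and $\widehat{\Theta}=W\Theta$ hold. Substituting each into the corresponding eigenvalue equation and using~\eqref{eq:init_diff_eq} to eliminate second derivatives, a short computation produces the pair of identities
\[
 W''\Phi + 2W'\Phi' = B\Phi, \qquad W''\Theta + 2W'\Theta' = B\Theta,
\]
where $B(x,\lambda) := \epsilon\widehat{Q}W - W\epsilon Q + \lambda(W\epsilon-\epsilon W)$. These assemble into the single matrix identity
\[
 \begin{pmatrix} W''-B & 2W' \end{pmatrix}\begin{pmatrix} \Phi & \Theta \\ \Phi' & \Theta' \end{pmatrix} = 0.
\]
The $4\times 4$ block matrix on the right is the fundamental matrix of the first-order system equivalent to~\eqref{eq:init_diff_eq}; its coefficient matrix has vanishing trace, so by Liouville's formula its determinant is independent of $x$, and a direct initial-value calculation (in each of the cases $\alpha\in\bbC$ and $\alpha=\infty$) shows it equals $-1$. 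Invertibility of this block matrix then forces $W'=0$.

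Thus $W$ is constant in $x$. For $\alpha\neq\infty$ we have $W(0,\lambda) = SS^{-1} = I$; for $\alpha=\infty$ the expansions $\Phi(x) = -xI + \calO(x^{2}) = \widehat{\Phi}(x)$ near the origin give $\lim_{x\to 0^{+}} W(x,\lambda) = I$. In either case $W\equiv I$, so $\widehat{\Phi}=\Phi$ on $\bbC\setminus\bbR$, and~\eqref{eq:c13} follows on all of $\bbC$ by analyticity. I expect the principal technical obstacle to be the first step in the Dirichlet case $\alpha=\infty$, where fixing the integration constant in $N-\widehat{N}=\mathrm{const}$ requires justifying $\Phi^{-1}X\to 0$ along non-real rays; this amounts to showing that the exponential decay of the $L^{2}$ solution $X$ outpaces any inverse growth of $\Phi$, drawing on estimates in the spirit of Lemma~\ref{lma.d4}.
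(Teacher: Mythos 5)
Your argument reaches the right conclusion, but by a substantially longer road than the paper, and with one genuine gap. The paper's proof is far more economical and bypasses your entire first step: it differentiates \eqref{eq:c7} in $x$ and applies the Wronskian identity \eqref{eq:wronsk_1} to both sides, which removes the terms containing $\Phi'(x,\overline{\lambda})^{*}$ and leaves $\Phi(x,\overline{\lambda})^{*}\epsilon\Phi'(x,\lambda)=\widehat{\Phi}(x,\overline{\lambda})^{*}\epsilon\widehat{\Phi}'(x,\lambda)$; combined with \eqref{eq:c7} itself this gives the equality of logarithmic derivatives $\Phi^{-1}\Phi'=\widehat{\Phi}^{-1}\widehat{\Phi}'$ (this is \eqref{eq:c12}), whence $(\widehat{\Phi}\Phi^{-1})'=0$ by a one-line computation. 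No statement about $\Theta$, no second-order equation for $W$, and no $4\times4$ fundamental matrix are required. That said, the second half of your argument is sound: the identities $W''\Phi+2W'\Phi'=B\Phi$ and $W''\Theta+2W'\Theta'=B\Theta$ are correctly derived, the Liouville argument for the invertibility of $\bigl(\begin{smallmatrix}\Phi&\Theta\\ \Phi'&\Theta'\end{smallmatrix}\bigr)$ is valid (the determinant equals $-1$ at $x=0$ in both cases), and the normalisation $W\equiv I$ from the initial conditions, including $\Phi(x)=-xI+\calO(x^{2})$ when $\alpha=\infty$, matches what the paper does.

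The gap is the one you flag yourself: in the Dirichlet case you fix the constant in $N-\widehat{N}=\mathrm{const}$ by asserting $\Phi(x,\lambda)^{-1}X(x,\lambda)\to0$ as $x\to\infty$ (for fixed non-real $\lambda$, which is what you must mean). This is plausible but nowhere proved in the paper, and it is not a soft fact: it needs a lower bound on $\Phi(x,\lambda)$ (equivalently, Levinson-type asymptotics in $x$ for fixed $\lambda$), a different regime from the large-$\lambda$ estimates of Lemma~\ref{lma.d4}. Fortunately the detour is unnecessary: $N=\widehat{N}$ holds for every $\alpha$, including $\alpha=\infty$, by pure algebra. Indeed, \eqref{eq:c4} rewrites as $\Phi(x,\lambda)^{-1}\Theta(x,\lambda)=\widehat{\Theta}(x,\overline{\lambda})^{*}\bigl(\widehat{\Phi}(x,\overline{\lambda})^{*}\bigr)^{-1}$, and the hatted version of the symmetry \eqref{eq.X3} from the appendix says precisely that the right-hand side equals $\widehat{\Phi}(x,\lambda)^{-1}\widehat{\Theta}(x,\lambda)$. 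With that repair your proof closes; but the paper's shortcut, which never needs $N=\widehat{N}$ at all, is the cleaner route.
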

\begin{proof}
Again, it suffices to assume $\lambda$ non-real and $x>0$. 
Let us differentiate \eqref{eq:c7}:
\[
\Phi'(x,\overline{\lambda})^{*}\epsilon\Phi(x,\lambda)+\Phi(x,\overline{\lambda})^{*}\epsilon\Phi'(x,\lambda)
=
\widehat{\Phi}'(x,\overline{\lambda})^{*}\epsilon\widehat{\Phi}(x,\lambda)+\widehat{\Phi}(x,\overline{\lambda})^{*}\epsilon\widehat{\Phi}'(x,\lambda)
\]
and use \eqref{eq:wronsk_1} on both sides; this gives
\[
\Phi(x,\overline{\lambda})^{*}\epsilon\Phi'(x,\lambda)
=
\widehat{\Phi}(x,\overline{\lambda})^{*}\epsilon\widehat{\Phi}'(x,\lambda).
\]
Taking the inverses in~\eqref{eq:c7} and multiplying on the right by both sides of the last identity, we find
\begin{equation}
\Phi(x,\lambda)^{-1}\Phi'(x,\lambda)=\widehat{\Phi}(x,\lambda)^{-1}\widehat{\Phi}'(x,\lambda).
\label{eq:c12}
\end{equation}
For the rest of the proof we drop $\lambda$ from our notation. 
We  differentiate $\widehat{\Phi}\Phi^{-1}$ and use \eqref{eq:c12} to express $\widehat{\Phi}'$:
\begin{align*}
(\widehat{\Phi}(x)\Phi^{-1}(x))'
&=
\widehat{\Phi}'(x)\Phi(x)^{-1}-\widehat{\Phi}(x)\Phi(x)^{-1}\Phi'(x)\Phi(x)^{-1}
\\
&=\widehat{\Phi}(x)\Phi(x)^{-1}\Phi'(x)\Phi(x)^{-1}- \widehat{\Phi}(x)\Phi(x)^{-1}\Phi'(x)\Phi(x)^{-1}=0.
\end{align*}
We conclude that the matrix $\widehat{\Phi}(x)\Phi^{-1}(x)$ is independent of $x$; let us write this as
\begin{equation}
\widehat{\Phi}(x)=D(\lambda)\Phi(x),
\label{eq:Phi}
\end{equation}
where $D(\lambda)$ is a matrix that may depend on $\lambda$ but is independent of $x$. If $\alpha\neq\infty$, then using the initial conditions~\eqref{eq:phi,theta_bc_cond}, we find that $D(\lambda)=I$, which completes the proof. If $\alpha=\infty$, then we differentiate \eqref{eq:Phi} with respect to $x$ and again compare with the initial conditions; this again shows that $D(\lambda)=I$. 
\end{proof}

Now we can conclude. 

\begin{proof}[Proof of Theorem~\ref{thm:a1}]
Differentiating~\eqref{eq:c13} twice and using the differential equations for $\Phi$ and $\widehat{\Phi}$, we obtain $Q=\widehat{Q}$. 
\end{proof}

\section{Proofs of Theorems~\ref{thm:a2} and~\ref{thm.diagh}}
\label{sec.b}

\subsection{Proof of Theorem~\ref{thm:a2}}\label{sec.b3a}
\mbox{}

\emph{Step 1:} We start by establishing two symmetry conditions for $M_\alpha(\lambda)$. 
Observe that, due to the special structure of $Q(x)$ from \eqref{eq:a2}, we have
\[
\epsilon\overline{Q(x)}\epsilon=Q(x).
\]
From here it follows that if $F$ satisfies the eigenvalue equation \eqref{eq:init_diff_eq}, then $\epsilon \overline{F}$ satisfies the same equation with $\overline{\lambda}$ in place of $\lambda$. Furthermore, any solution to \eqref{eq:init_diff_eq} can be multiplied on the right by a constant $2\times 2$ matrix; hence $\epsilon\overline{F}\epsilon$ is also a solution of \eqref{eq:init_diff_eq} with $\overline{\lambda}$ in place of $\lambda$. 

Now consider the functions
\[
\widetilde{\Theta}(x,\lambda):=\epsilon\overline{\Theta(x,\overline{\lambda})}\epsilon, 
\quad
\widetilde{\Phi}(x,\lambda):=\epsilon\overline{\Phi(x,\overline{\lambda})}\epsilon.
\]
By the above reasoning, they are solutions of \eqref{eq:init_diff_eq} and by inspection they satisfy the same initial conditions \eqref{eq:phi,theta_bc_cond} as $\Theta$ and $\Phi$. It follows that 
\begin{equation}
\widetilde\Theta=\Theta, \quad 
\widetilde\Phi=\Phi,
\label{eq:a7}
\end{equation}
and so we find
\begin{equation}
\widetilde{\Theta}(\cdot,\lambda)-\widetilde{\Phi}(\cdot,\lambda)M_\alpha(\lambda)\in L^2(\bbR_+).
\label{eq:a3}
\end{equation}
On the other hand, taking complex conjugation in 
\[
\Theta(\cdot,\overline{\lambda})-\Phi(\cdot,\overline{\lambda})M_\alpha(\overline{\lambda})\in L^2(\bbR_+)
\]
and multiplying it by $\epsilon$ on the right and on the left, we arrive at
\[
\widetilde{\Theta}(\cdot,\lambda)-\widetilde{\Phi}(\cdot,\lambda)(\epsilon \overline{M_\alpha(\overline{\lambda})}\epsilon)\in L^2(\bbR_+).
\]
Comparing with \eqref{eq:a3} and using the uniqueness of the definition of $M_\alpha(\lambda)$, we find
\[
M_\alpha(\lambda)=\epsilon\overline{M_\alpha(\overline{\lambda})}\epsilon.
\]
Finally, using the relation~\eqref{eq:a4}, we arrive at
\[
M_\alpha(\lambda)=\epsilon M_\alpha(\lambda)^T\epsilon,
\]
where $M_\alpha^T$ is the matrix transpose of $M_\alpha$. Moreover, the constant term $\Re M_\alpha(\ii)$ in the integral representation \eqref{eq:intres} enjoys the same symmetry and therefore 
\begin{equation}
M_\alpha(\lambda)-\Re M_\alpha(\ii)=\epsilon (M_\alpha(\lambda)-\Re M_\alpha(\ii))^T\epsilon.
\label{eq:a5}
\end{equation}
This is our first symmetry condition on $M_\alpha(\lambda)$. 

\emph{Step 2:}
Denote 
\[
\xi:=\begin{pmatrix}1&0\\0&-1\end{pmatrix}
\]
and observe that
\[
\xi\epsilon\xi=-\epsilon \quad\text{ and }\quad \xi Q(x)\xi=-Q(x).
\]
It follows that if $F$ satisfies the eigenvalue equation \eqref{eq:init_diff_eq}, then $\xi F\xi$ satisfies the same equation with $-\lambda$ in place of $\lambda$. By inspecting the initial conditions \eqref{eq:phi,theta_bc_cond}, we see that 
\[
\xi\Phi(x,-\lambda)\xi=\Phi(x,\lambda), \quad
\xi\Theta(x,-\lambda)\xi=-\Theta(x,\lambda)
\]
(note the minus in the right-hand side of the second equation).
Now arguing in exactly the same way as in the previous step, we obtain the second symmetry relation 
\[
M_\alpha(\lambda)=-\xi M_\alpha(-\lambda)\xi
\]
for the $M$-function. The constant term $\Re M_\alpha(\ii)$ satisfies the same symmetry, and therefore 
\begin{equation}
M_\alpha(\lambda)-\Re M_\alpha(\ii)=-\xi (M_\alpha(-\lambda)-\Re M_\alpha(\ii))\xi.
\label{eq:a6}
\end{equation}

\emph{Step 3:} Now we translate the symmetry relations \eqref{eq:a5} and \eqref{eq:a6} for the $M$-function into symmetry relations for the measure $\Sigma$. It will be convenient to represent the spectral measure $\Sigma$ as
\[
\dd\Sigma(\lambda)=
R(\lambda)\dd\nu(\lambda),
\]
where $\nu$ is the scalar measure defined by $\dd\nu:=\dd\Tr\Sigma/2$, and $R(\lambda)$ is a positive semi-definite matrix with trace equal to $2$, defined $\nu$-a.e.; the matrix $R(\lambda)$ can be understood as the Radon--Nikodym derivative $\dd\Sigma/\dd\nu$. 

Let us start with \eqref{eq:a5}. By the uniqueness of the measure in the integral representation \eqref{eq:intres} it implies that 
\[
R(\lambda)=\epsilon R(\lambda)^T\epsilon,
\]
i.e. in terms of the matrix elements of $R(\lambda)$,
\[
R_{1,1}(\lambda)=R_{2,2}(\lambda).
\]
Since $R$ has trace two,  we conclude that 
\[
R_{1,1}(\lambda)=R_{2,2}(\lambda)=1.
\]
Since the matrix $R(\lambda)$ is positive semi-definite, we find that it has the form 
\[
R(\lambda)=
\begin{pmatrix}
1&\psi(\lambda)
\\
\overline{\psi(\lambda)}&1
\end{pmatrix}
\]
with some complex-valued $\psi\in L^\infty(\nu)$ satisfying $\abs{\psi(\lambda)}\leq1$ for $\nu$-a.e. $\lambda\in\bbR$.

Now consider \eqref{eq:a6}; it gives
\begin{align*}
\int_{-\infty}^\infty\left(\frac{1}{t-\lambda}-\frac{t}{1+t^2}\right)R(t)\dd\nu(t)
&=
-\int_{-\infty}^\infty\left(\frac{1}{t+\lambda}-\frac{t}{1+t^2}\right)(\xi R(t)\xi)\dd\nu(t)
\\
&=
\int_{-\infty}^\infty\left(\frac{1}{t-\lambda}-\frac{t}{1+t^2}\right)(\xi R(-t)\xi)\dd\nu(-t).
\end{align*}
Again, by the uniqueness of the measure in the Herglotz--Nevanlinna integral representation, we have
\[
R(t)\dd\nu(t)=(\xi R(-t)\xi)\dd\nu(-t).
\]
Since
\[
\xi R(-t)\xi=\begin{pmatrix}1&-\psi(-t)\\-\overline{\psi(-t)}&1\end{pmatrix}, 
\]
this implies that $\nu$ is even and $\psi$ is odd. 
\qed

\subsection{Proof of Theorem~\ref{thm.diagh}}\label{sec.b3}
If $\lambda$ is an eigenvalue of $H$, then $\lambda$ has the geometric multiplicity $\leq1$ because there cannot be more than one linearly independent solution to the eigenvalue equation $-f''+qf=\lambda f$ satisfying the boundary condition \eqref{bc0}. In particular, the kernel of $H$ is either trivial or one-dimensional. Moreover, the operator $f\mapsto\overline f$ of complex conjugation in $L^2(\bbR_+)$ effects an anti-unitary equivalence of $H$ and $H^*$ and therefore the kernel of $H^*$ has the same dimension as $H$. 

It is a simple exercise in operator theory to check that the operators
\[
\bH=
\begin{pmatrix}
0&H\\ H^*&0
\end{pmatrix}
\quad\text{ and }\quad 
\wt\bH=
\begin{pmatrix}
\abs{H}&0\\0&-\abs{H^*}
\end{pmatrix}
\]
are unitarily equivalent (for any closed operator $H$). By the diagonalisation of $\bH$, see Proposition~\ref{thm.sp.decomp}, we conclude that $\wt\bH$ is unitarily equivalent to the operator $\calM$ of multliplication by the independent variable in $L_{\Sigma}^{2}(\bbR;\bbC)$. It follows that the operators
\[
\wt\bH_+:=\wt\bH\chi_{\bbR_{+}}(\wt\bH) 
\quad\text{ and }\quad 
\calM_+:=\calM\chi_{\bbR_{+}}(\calM),
\]
where $\chi_{\bbR_{+}}$ is the indicator function of set ${\bbR_{+}}\equiv(0,\infty)$, are unitarily equivalent. Therefore, they remain unitarily equivalent after restriction to the orthogonal complements to their kernels:
\[
\wt\bH_+|_{(\Ker \wt\bH_+)^\perp}
\quad\text{ is unitarily equivalent to }\quad
\calM_+|_{(\Ker\calM_+)^\perp}.
\]
Here the first operator equals $\abs{H}|_{(\Ker \abs{H})^\perp}$ while the second operator is $\calM|_{L^{2}_{\Sigma}(\bbR_{+};\bbC^{2})}$.

Finally, we prove \eqref{eq.multone} and \eqref{eq.multtwo}. By the already proven part of the theorem, the question reduces to the spectral multiplicity of the operator $\calM$ on $L_{\Sigma}^{2}(\bbR_{+};\bbC)$. Now the proof follows from the simple observation that the rank of the matrix 
\[
R(\lambda)=
\begin{pmatrix}
1&\psi(\lambda)
\\
\overline{\psi(\lambda)}&1
\end{pmatrix}
\]
is one on $S_1$ and two on $S_2$. 
The proof of Theorem~\ref{thm.diagh} is complete. 
\qed

\section{The self-adjoint and normal $H$}\label{sec.r}

Here we prove Theorems~\ref{thm:a5},  \ref{thm:a6} and \ref{thm:a7}. 

\subsection{Proof of Theorem~\ref{thm:a5}}
In this proof, it will be convenient to indicate the dependence of various quantities on $q$ and $\alpha$ explicitly, e.g. $\Phi(x,\lambda;q,\alpha)$, $\Sigma(s;q,\alpha)$, etc. 

We start with the observation that, by taking complex conjugation of the eigenvalue equation and checking the boundary conditions at $x=0$, we find
\[
\overline{\Phi(x,\lambda;q,\alpha)}=\Phi(x,\overline{\lambda};\overline{q},\overline{\alpha})
\quad\mbox{ and }\quad 
\overline{\Theta(x,\lambda;q,\alpha)}=\Theta(x,\overline{\lambda};\overline{q},\overline{\alpha}).
\]
From here and the definition of the $M$-function it is easy to conclude that 
\[
\overline{M_{\alpha}(\lambda;q)}=M_{\overline{\alpha}}(\overline{\lambda};\overline{q}).
\]
Using the uniqueness of the measure in the Herglotz--Nevanlinna integral representation, we find
\[
\dd\overline{\Sigma(s;q,\alpha)}=\dd\Sigma(s;\overline{q},\overline{\alpha})
\]
for all $s\in\bbR$, and so
\[
\dd\nu(s;q,\alpha)=\dd\nu(s;\overline{q},\overline{\alpha})\quad\text{ and }\quad \overline{\psi(s;q,\alpha)}=\psi(s;\overline{q},\overline{\alpha})
\]
for $\nu$-a.e. $s\in\bbR$ by Theorem~\ref{thm:a2}. 

Suppose $H$ is self-adjoint, i.e. $q$ is real-valued and $\alpha\in\bbR\cup\{\infty\}$. Then we get $ \overline{\psi(s;q,\alpha)}=\psi(s;q,\alpha)$, i.e. $\psi(s;q,\alpha)$ is real-valued. Conversely, suppose $\psi(s;q,\alpha)$ is real for $\nu$-a.e. $s\in\bbR$. It follows that 
\[
\dd\nu(s;q,\alpha)=\dd\nu(s;\overline{q},\overline{\alpha})
\quad\mbox{ and }\quad
\psi(s;q,\alpha)=\psi(s;\overline{q},\overline{\alpha}).
\]
Then by the uniqueness Theorem~\ref{thm:a3}, we conclude that $\overline{q}=q$ and $\alpha=\overline{\alpha}$, i.e. $q$ is real-valued and $\alpha\in\bbR\cup\{\infty\}$. 
The proof of Theorem~\ref{thm:a5} is complete. \qed

\subsection{Proof of Theorem~\ref{thm:a7}}
Throughout this subsection, $q$ is a bounded \emph{real} function, $\alpha\in\bbR\cup\{\infty\}$, and $H$ is the corresponding \emph{self-adjoint} operator. We use the objects introduced in Section~\ref{sec.warmup}: the scalar solutions $\varphi$, $\theta$, $\chi$ of the eigenvalue equation, the scalar $m$-function $m_\alpha(\lambda)$ and the scalar spectral measure $\sigma$.  Below $M_\alpha$ is the $2\times 2$ matrix-valued $M$-function corresponding to the potential 
\begin{equation}
Q=\begin{pmatrix}0&q+\ii\omega\\q-\ii\omega&0\end{pmatrix}.
\label{eq:X6}
\end{equation}
We start by relating $M_{\alpha}$ with $m_{\alpha}$.
\begin{lemma}
Let $\lambda$, $s$ be non-real complex numbers such that $s^2=\lambda^2+\omega^2$. Then 
\begin{equation}
M_\alpha(s)
=
\begin{pmatrix}
s&\lambda+\ii\omega\\ 
\lambda-\ii\omega& s
\end{pmatrix}
\frac{m_\alpha(\lambda)}{2\lambda}
-
\begin{pmatrix}
s&-\lambda+\ii\omega\\ 
-\lambda-\ii\omega& s
\end{pmatrix}
\frac{m_\alpha(-\lambda)}{2\lambda}.
\label{eq:X9}
\end{equation}
\end{lemma}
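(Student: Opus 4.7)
The plan is to reduce the matrix eigenvalue equation $-\epsilon F''+QF=sF$ with $Q$ as in~\eqref{eq:X6} to two decoupled scalar Schr\"odinger equations at the parameters $\pm\lambda$, and then to identify the matrix solutions $\Phi$ and $\Theta$ explicitly in terms of the scalar objects $\varphi(\cdot,\pm\lambda)$, $\theta(\cdot,\pm\lambda)$ and $\chi(\cdot,\pm\lambda)=\theta(\cdot,\pm\lambda)-\varphi(\cdot,\pm\lambda)m_\alpha(\pm\lambda)$. Writing $F=(f_1,f_2)^T$ and seeking linear combinations $h=f_1+cf_2$ that satisfy a scalar equation $-h''+qh=\mu h$, a direct computation forces $c_\pm=(\pm\lambda-\ii\omega)/s$ with $\mu=\pm\lambda$, where $\lambda^2=s^2-\omega^2$; inverting yields
\[
f_1=\frac{\lambda+\ii\omega}{2\lambda}h_++\frac{\lambda-\ii\omega}{2\lambda}h_-,\qquad f_2=\frac{s}{2\lambda}(h_+-h_-),
\]
with $h_\pm$ a scalar solution at $\pm\lambda$.

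Applying this dictionary column by column to the initial conditions~\eqref{eq:phi,theta_bc_cond}, and using that $\alpha\in\bbR\cup\{\infty\}$ forces $S=\sin\gamma\,I$ and $C=\cos\gamma\,\epsilon$, I would obtain the compact identities
\[
\Phi(x,s)=\frac{1}{2\lambda}\bigl[\varphi(x,\lambda)\Pi\epsilon-\varphi(x,-\lambda)\widetilde\Pi\epsilon\bigr],\qquad \Theta(x,s)=\frac{1}{2\lambda}\bigl[\theta(x,\lambda)\Pi-\theta(x,-\lambda)\widetilde\Pi\bigr],
\]
where $\Pi$ and $\widetilde\Pi$ denote the two coefficient matrices appearing in~\eqref{eq:X9}. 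The correctness of these formulas at $x=0$ is then verified from the pair of identities $\Pi-\widetilde\Pi=2\lambda\epsilon$ and $(\Pi-\widetilde\Pi)\epsilon=2\lambda I$.

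The crux is then an algebraic fact, established by direct multiplication using $(\lambda+\ii\omega)(\lambda-\ii\omega)=s^2$:
\[
\Pi\epsilon\Pi=2\lambda\Pi,\qquad \widetilde\Pi\epsilon\widetilde\Pi=-2\lambda\widetilde\Pi,\qquad \Pi\epsilon\widetilde\Pi=\widetilde\Pi\epsilon\Pi=0.
\]
Denoting the right-hand side of~\eqref{eq:X9} by $N(s)$, these identities collapse the four terms of $\Phi(x,s)N(s)$ to the two surviving ones, giving $\Phi(x,s)N(s)=\tfrac{1}{2\lambda}[\varphi(x,\lambda)\Pi\,m_\alpha(\lambda)-\varphi(x,-\lambda)\widetilde\Pi\,m_\alpha(-\lambda)]$. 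Subtracting from $\Theta(x,s)$ assembles the scalar combinations into $\chi$'s:
\[
\Theta(x,s)-\Phi(x,s)N(s)=\frac{1}{2\lambda}\bigl[\chi(x,\lambda)\Pi-\chi(x,-\lambda)\widetilde\Pi\bigr],
\]
which belongs to $L^2(\bbR_+;\bbC^2)$ columnwise, since $\chi(\cdot,\pm\lambda)\in L^2(\bbR_+)$. By the uniqueness part of Proposition~\ref{prp.Mf}, $N(s)=M_\alpha(s)$, which is \eqref{eq:X9}.

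The main obstacle is the second step, the derivation of the compact formulas for $\Phi$ and $\Theta$. Since these two solutions have initial data of different algebraic character ($I$-valued for $\Phi$, $\epsilon$-valued for $\Theta$), one must carefully trace each column through the $h_\pm$-decomposition and verify that only the correct scalar basis enters: pure $\varphi$-type for $\Phi$ and pure $\theta$-type (with no $\varphi$-contamination) for $\Theta$. Once these formulas are pinned down, the remainder is pure algebra driven by the idempotent-like identities for $\Pi$ and $\widetilde\Pi$.
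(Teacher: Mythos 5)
Your proposal is correct and is essentially the paper's own proof in different notation: your $\Pi$ and $\widetilde\Pi$ are the paper's $2\lambda V_+$ and $-2\lambda V_-$, your identities $\Pi\epsilon\Pi=2\lambda\Pi$, $\widetilde\Pi\epsilon\widetilde\Pi=-2\lambda\widetilde\Pi$, $\Pi\epsilon\widetilde\Pi=\widetilde\Pi\epsilon\Pi=0$ are exactly the paper's $V_\pm\epsilon V_\pm=V_\pm$, $V_\pm\epsilon V_\mp=0$, and the closing step (exhibit $\Theta-\Phi N$ as a combination of $\chi(\cdot,\pm\lambda)$, hence square integrable, then invoke uniqueness of the $M$-function) is identical. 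The only cosmetic difference is that you derive the intertwining matrices by diagonalising the system via $h=f_1+cf_2$, whereas the paper simply states and verifies the observation that $V_\pm f$ solves the matrix equation when $f$ solves the scalar one at $\pm\lambda$.
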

\begin{proof}
Let us denote 
\[
V_+:=\frac1{2\lambda}\begin{pmatrix}
s&\lambda+\ii\omega\\ 
\lambda-\ii\omega& s
\end{pmatrix}, 
\quad 
V_-:=-\frac1{2\lambda}
\begin{pmatrix}
s&-\lambda+\ii\omega\\ 
-\lambda-\ii\omega& s
\end{pmatrix}.
\]
Note that $V_-$ is obtained from $V_+$ by the change $\lambda\mapsto -\lambda$. 
We start with an elementary observation. Suppose $f$ satisfies the scalar eigenvalue equation
\[
-f''+qf=\lambda f.
\]
Then the $2\times 2$ matrix-valued function 
\[
F(x)=
V_+f(x)
\]
satisfies the equation 
\begin{equation}
-\epsilon F''+QF=sF,
\label{eq:X8}
\end{equation}
with $Q$ as in \eqref{eq:X6}. We will also use this observation with $\lambda$ replaced by $-\lambda$ throughout. 

Now let $\varphi$, $\theta$ be the solutions \eqref{eq:X11}. By our observation, 
\begin{align*}
\Phi(x,s)&=V_+\epsilon \varphi(x,\lambda)+V_-\epsilon\varphi(x,-\lambda),
\\
\Theta(x,s)&=V_+\theta(x,\lambda)+V_-\theta(x,-\lambda)
\end{align*}
are solutions to \eqref{eq:X8}. (Recall that any solution of \eqref{eq:X8} can be multiplied by a constant matrix on the right, hence we have multiplied by $\epsilon$ on the right in the first equation above.) By using formulas \eqref{eq:def_sc_scalar} and \eqref{eq:X11} with $\alpha\in\bbR\cup\{\infty\}$, we verify that $\Phi$, $\Theta$ satisfy the initial conditions \eqref{eq:phi,theta_bc_cond}. Now let us check that the right-hand side of \eqref{eq:X9} satisfies the definition of the $M$-function. In our matrix notation, the right-hand side of \eqref{eq:X9} reads 
\[
V_+m_\alpha(\lambda)+V_-m_\alpha(-\lambda).
\]
Using the matrix identities 
\[
V_+\epsilon V_-=V_-\epsilon V_+=0, \quad
V_+\epsilon V_+=V_+, \quad 
V_-\epsilon V_-=V_-, 
\]
we find
\begin{align*}
\Theta&(x,s)-\Phi(x,s)(V_+m_\alpha(\lambda)+V_-m_\alpha(-\lambda))
\\
=&V_+\theta(x,\lambda)+V_-\theta(x,-\lambda)
-
(V_+\epsilon \varphi(x,\lambda)+V_-\epsilon\varphi(x,-\lambda))
(V_+m_\alpha(\lambda)+V_-m_\alpha(-\lambda))
\\
=&V_+\theta(x,\lambda)+V_-\theta(x,-\lambda)
-
(V_+ \varphi(x,\lambda)m_\alpha(\lambda)+V_-\varphi(x,-\lambda)m_\alpha(-\lambda))
\\
=&V_+\chi(x,\lambda)+V_-\chi(x,-\lambda),
\end{align*}
which is a square integrable function of $x$. By the uniqueness of the $M$-function, see Proposition~\ref{prp.Mf}, this proves that the right-hand side of \eqref{eq:X9} coincides with $M_{\alpha}(s)$. The proof is complete. 
\end{proof}

\begin{proof}[Proof of Theorem~\ref{thm:a7}]
Let us consider \eqref{eq:X9} as $\lambda$ approaches $\bbR_+$ from the upper half-plane and $s$ approaches the interval $(\abs{\omega},\infty)$ from the upper half-plane. Then the Stieltjes inversion formula together with the change of variables $\dd s=\frac{\lambda}{s}\dd\lambda$ yields
\[
\begin{pmatrix}
1&\psi(s)\\
\overline{\psi(s)}&1
\end{pmatrix}
\dd\nu(s)
=
\begin{pmatrix}
s&\lambda+\ii\omega\\
\lambda-\ii\omega& s
\end{pmatrix}
\frac{\dd\sigma(\lambda)}{2s}
+
\begin{pmatrix}
s&-\lambda+\ii\omega\\
-\lambda-\ii\omega& s
\end{pmatrix}
\frac{\dd\sigma_*(\lambda)}{2s},
\]
where we have expressed the spectral measure $\dd\Sigma$ on the left by~\eqref{eq:a1}.
From here, inspecting the diagonal and off-diagonal entries, we get \eqref{eq:X3} and~\eqref{eq:X4}. 

Finally, if $\omega\not=0$, then it is easy to see from \eqref{eq:X9} that $M_\alpha(s)$ admits analytic continuation through the interval $(-\abs{\omega},\abs{\omega})$ and so the measure $\nu$ vanishes on this interval. The proof of Theorem~\ref{thm:a7} is complete. 
\end{proof}

\subsection{Proof of Theorem~\ref{thm:a6}}
As already mentioned, formula \eqref{eq.r6} is the consequence of Theorem~\ref{thm:a7} with $\omega=0$. We only need to prove the last statement. 
If $H$ is positive semi-definite, then the spectral measure $\sigma$ vanishes on $(-\infty,0)$. From \eqref{eq.r6} it follows that $\psi(s)=-1$ for $\nu$-a.e. $s<0$. Since $\psi$ is odd, we find $\psi(s)=1$ for $\nu$-a.e. $s>0$. 

Conversely, suppose $\psi(s)=1$ for $\nu$-a.e. $s>0$. From Theorem~\ref{thm:a5} we know that  $H$ is self-adjoint. From \eqref{eq.r6} it follows that the spectral measure $\sigma$ vanishes on $(-\infty,0)$, and so $H$ is positive semi-definite. 
\qed

\section{Example: the free case}\label{sec:examp}

Here we take a break from proofs and compute formulas for the spectral pair of the operator $H$ with  $q=0$. We compute explicitly the Titchmarsh--Weyl $M$-function corresponding to the hermitisation $\bH$; we will need the formula for this $M$-function in the next section. 

\subsection{$M$-functions for different boundary conditions}
We start with a general preliminary. We would like to express the $M$-function $M_\alpha$, corresponding to a general boundary parameter $\alpha$, in terms of the $M$-function $M_0$, corresponding to the Neumann boundary condition $\alpha=0$. Similarly to $M_0$, we will write the subscript $0$ to distinguish the solutions $\Phi_0$, $\Theta_0$, and $X_0$ corresponding to the Neumann boundary condition $\alpha=0$. Recall the boundary conditions \eqref{eq:phi,theta_bc_cond} for $\Phi_0$ and $\Theta_0$: 
\[
\begin{aligned}
\Phi_0(0,\lambda)&=I,\\
\Phi_0'(0,\lambda)&=0,
\end{aligned}
\qquad
\begin{aligned}
\Theta_0(0,\lambda)=0, \\
\Theta_0'(0,\lambda)=\epsilon.
\end{aligned}
\]
We need a statement similar to Lemma~\ref{lma.nonsingular}. Below $S$ and $C$ are matrices \eqref{eq:a2b} with any parameter $\alpha\in\bbC\cup\{\infty\}$.

\begin{lemma}\label{lem:inv_matr_m-func}
For all $\lambda\in\bbC\setminus\bbR$, the matrices $S-M_0(\lambda)C$ and $S-C M_0(\lambda)$ are non-singular.
\end{lemma}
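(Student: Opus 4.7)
The plan is to exploit two facts in tandem: the identities $SC=CS$ and $S^{2}+C^{2}=I$ from~\eqref{eq:S-C_id}, together with the strict positivity $\Im M_0(\lambda)>0$ for $\Im\lambda>0$ established in Proposition~\ref{prp.Mf}. Together, these will rule out non-trivial vectors in the kernel of $S-M_0(\lambda)C$ and $S-CM_0(\lambda)$.

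First I would treat $S-M_0(\lambda)C$. Assume without loss of generality $\Im\lambda>0$ and suppose $v\in\bbC^{2}$ satisfies $(S-M_0(\lambda)C)v=0$, i.e.\ $Sv=M_0(\lambda)Cv$. The key observation is that, since $S$ and $C$ are self-adjoint and commute by~\eqref{eq:S-C_id}, the product $SC$ is self-adjoint; hence $\langle Sv,Cv\rangle_{\bbC^{2}}=\langle v,SCv\rangle_{\bbC^{2}}$ is real. Therefore
\[
0=\Im\langle Sv,Cv\rangle_{\bbC^{2}}=\Im\langle M_0(\lambda)Cv,Cv\rangle_{\bbC^{2}}=\langle(\Im M_0(\lambda))Cv,Cv\rangle_{\bbC^{2}}.
\]
Because $\Im M_0(\lambda)$ is strictly positive definite, this forces $Cv=0$, whence also $Sv=M_0(\lambda)Cv=0$. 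Using $S^{2}+C^{2}=I$, we get $v=S(Sv)+C(Cv)=0$, proving non-singularity of $S-M_0(\lambda)C$.

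For $S-CM_0(\lambda)$ I would simply take the adjoint and repeat the argument. Namely, $(S-CM_0(\lambda))^{*}=S-M_0(\lambda)^{*}C$, and for $\Im\lambda>0$ we have $\Im M_0(\lambda)^{*}=-\Im M_0(\lambda)<0$, which is again (strictly) definite. The identical calculation---pairing $(S-M_0(\lambda)^{*}C)v=0$ with $Cv$ and using that $\langle Sv,Cv\rangle_{\bbC^{2}}$ is real---yields $Cv=0$, then $Sv=0$, then $v=0$. The case $\Im\lambda<0$ is handled by reflecting the argument (replacing $>0$ by $<0$), and the edge cases $\alpha\in\{0,\infty\}$ are automatically absorbed: for $\alpha=0$ one has $S=I$, $C=0$, for $\alpha=\infty$ one has $S=0$, $C=\epsilon$, and in both situations the identities $SC=CS$ and $S^{2}+C^{2}=I$ remain in force.

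There is no real obstacle here; the only subtlety is recognising that the commutativity $SC=CS$ together with self-adjointness is precisely what makes $\langle Sv,Cv\rangle_{\bbC^{2}}$ real, allowing the strict definiteness of $\Im M_0(\lambda)$ to be converted into a hard constraint on $Cv$. This is the matrix analogue of the standard scalar fact that $\sin\gamma-m_\alpha(\lambda)\cos\gamma\neq 0$ for $\Im\lambda\neq 0$.
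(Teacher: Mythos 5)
Your proof is correct, but it takes a genuinely different route from the paper. The paper first observes that $(S-M_0(\lambda)C)^{*}=S-CM_0(\overline{\lambda})$, reducing everything to the single family $S-CM_0(\lambda)$, and then argues spectrally: if $S-CM_0(\lambda)$ had a non-trivial kernel vector $v$, then $X_0(\cdot,\lambda)v$ would be a non-trivial square-integrable solution satisfying the boundary condition \eqref{eq:a2a}, so $\lambda$ would be a non-real eigenvalue of the self-adjoint operator $\bH$ --- impossible. You instead run a purely finite-dimensional quadratic-form computation: the commutativity and self-adjointness of $S,C$ make $\langle Sv,Cv\rangle_{\bbC^{2}}$ real, so a kernel vector forces $\langle(\Im M_0(\lambda))Cv,Cv\rangle_{\bbC^{2}}=0$, and strict definiteness of $\Im M_0(\lambda)$ (Proposition~\ref{prp.Mf}) then gives $Cv=0$, $Sv=0$, and $v=0$ via $S^{2}+C^{2}=I$. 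Your verification of $\Im\langle Mw,w\rangle=\langle(\Im M)w,w\rangle$ and the handling of the degenerate cases $\alpha\in\{0,\infty\}$ are all sound. What each approach buys: the paper's argument is shorter and makes the mechanism transparent (no non-real eigenvalues of a self-adjoint operator), but it implicitly uses that $X_0(\cdot,\lambda)v$ is a non-trivial $L^2$ solution; yours is self-contained linear algebra once \eqref{eq:a4a} is granted, is the exact matrix analogue of the scalar fact that $\sin\gamma-m_\alpha(\lambda)\cos\gamma\neq0$, and is essentially the Weyl-disk computation in disguise. Either proof is acceptable.
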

\begin{proof}
Since $C$ and $S$ are self-adjoint, we have 
\[
(S-M_0(\lambda)C)^*=S-C M_0(\lambda)^*=S-C M_0(\overline{\lambda}), 
\]
and therefore it suffices to prove the non-singularity of $S-C M_0(\lambda)$ for all $\lambda\in\bbC\setminus\bbR$. Suppose to the contrary that $S-C M_0(\lambda)$ is singular for some $\lambda\in\bbC\setminus\bbR$. The solution $X_0(x,\lambda)$ satisfies 
\[
X_0(0,\lambda)=- M_0(\lambda), \quad 
X_0'(0,\lambda)=\epsilon. 
\]
Putting this together, we see that the matrix 
\[
CX_0(0,\lambda)+S\epsilon X_0'(0,\lambda)
\]
is singular. It follows that $\lambda$ is an eigenvalue of the operator $\bH$ with the boundary condition \eqref{eq:a2a}; this is impossible for a non-real $\lambda$. 
\end{proof}

\begin{lemma}
For any $\lambda\in\bbC\setminus\bbR$, we have 
\begin{equation}
M_\alpha(\lambda)=
(C+S M_0(\lambda))(S-C M_0(\lambda))^{-1}.
\label{eq:X12}
\end{equation}
Equivalently, in terms of the boundary matrix $A$, we have
\begin{equation}
\begin{aligned}
M_{\alpha}(\lambda)&=(\epsilon A+M_0(\lambda))(I-\epsilon A M_0(\lambda))^{-1}, 
&\text{ if $\alpha\not=\infty$,}
\\
M_{\infty}(\lambda)&=-\epsilon M_{0}(\lambda)^{-1}\epsilon, 
&\text{ if $\alpha=\infty$.}
\end{aligned}
\label{eq:X13}
\end{equation}
\end{lemma}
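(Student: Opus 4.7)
The strategy I would take is to directly verify that the right-hand side of \eqref{eq:X12} satisfies the defining property of $M_\alpha$ from Proposition~\ref{prp.Mf}, then invoke uniqueness. Specifically, Lemma~\ref{lem:inv_matr_m-func} allows me to define $\widetilde{M}_\alpha(\lambda):=(C+SM_0(\lambda))(S-CM_0(\lambda))^{-1}$ for $\lambda\in\bbC\setminus\bbR$. I then show that $\Theta-\Phi\widetilde{M}_\alpha$ has $L^2$ columns, which by the uniqueness asserted in Proposition~\ref{prp.Mf} forces $\widetilde{M}_\alpha=M_\alpha$.

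The core of the argument is one algebraic identity. Using the expressions $\Phi=\Phi_0 S-\Theta_0 C$ and $\Theta=\Phi_0 C+\Theta_0 S$ established in \eqref{eq:phi-theta-by-neumann} (from the proof of Theorem~\ref{thm:d1}), together with $SC=CS$ and $S^2+C^2=I$ from \eqref{eq:S-C_id}, a direct expansion gives
\[
\Theta(S-CM_0)-\Phi(C+SM_0)=\Theta_0(S^2+C^2)-\Phi_0(S^2+C^2)M_0=\Theta_0-\Phi_0 M_0=X_0,
\]
where the $\Phi_0$-cross-terms cancel because $\Phi_0(CS-SC)=0$ and likewise for the $\Theta_0$-cross-terms. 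Dividing on the right by the invertible matrix $S-CM_0$ yields
\[
\Theta-\Phi\widetilde{M}_\alpha=X_0(S-CM_0)^{-1}.
\]
Since each column of $X_0$ lies in $L^2(\bbR_+;\bbC^2)$ by the definition of $M_0$, and right-multiplication by a constant $2\times 2$ matrix preserves this property, the columns of $\Theta-\Phi\widetilde{M}_\alpha$ are square-integrable. Uniqueness then gives $\widetilde{M}_\alpha=M_\alpha$, proving \eqref{eq:X12}.

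To deduce \eqref{eq:X13}, I substitute the explicit forms \eqref{eq:a2b}. For $\alpha\neq\infty$, the common factor $(1+|\alpha|^2)^{-1/2}$ in $S$ and $C$ cancels between the two bracketed factors, leaving $M_\alpha=(\epsilon A+M_0)(I-\epsilon A M_0)^{-1}$. For $\alpha=\infty$, we have $S=0$ and $C=\epsilon$, giving $M_\infty=\epsilon(-\epsilon M_0)^{-1}=-\epsilon M_0^{-1}\epsilon$ by $\epsilon^2=I$.

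The only mildly delicate step is the algebraic identity in the second paragraph, which is purely formal but requires care with non-commutativity: the reason it works is that the structural identities $SC=CS$ and $S^2+C^2=I$ are precisely what one needs to make the $\Phi_0$- and $\Theta_0$-coefficients collapse. Once this identity is in place, the rest is immediate from Lemma~\ref{lem:inv_matr_m-func} and Proposition~\ref{prp.Mf}.
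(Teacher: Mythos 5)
Your proposal is correct and follows essentially the same route as the paper: both arguments reduce to the single identity $\Theta(S-CM_0)-\Phi(C+SM_0)=X_0$ (the paper derives it by first inverting \eqref{eq:phi-theta-by-neumann}, you expand \eqref{eq:phi-theta-by-neumann} directly using \eqref{eq:S-C_id}, which is the same computation read in the other direction), followed by the invertibility of $S-CM_0$ from Lemma~\ref{lem:inv_matr_m-func} and the uniqueness in Proposition~\ref{prp.Mf}. The deduction of \eqref{eq:X13}, including the cancellation of the scalar factor $(1+|\alpha|^2)^{-1/2}$ and the use of $\epsilon^2=I$ in the Dirichlet case, also matches the paper.
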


\begin{proof}
Inverting \eqref{eq:phi-theta-by-neumann}, we find
\[
\begin{aligned}
\Phi_{0}&=\Theta C + \Phi S,\\
\Theta_{0}&=\Theta S -\Phi C.
\end{aligned}
\]
Using this, we find 
\begin{align*}
X_0&=\Theta_0-\Phi_0 M_0
=(\Theta S-\Phi C)-(\Theta C+\Phi S) M_0
\\
&=\left(\Theta-\Phi (C+S M_0)(S-C M_0)^{-1}\right)(S -C M_0),
\end{align*}
where the inverse $(S-C M_0)^{-1}$ exists by Lemma~\ref{lem:inv_matr_m-func}. By the uniqueness of the $M$-function, we obtain \eqref{eq:X12}.

Let $\alpha$ be finite. Since $S$ is a constant multiple of the identity matrix, we may rewrite~\eqref{eq:X12} as
\[
 M_{\alpha}(\lambda)=(CS^{-1}+M_0(\lambda))(I-CS^{-1} M_0(\lambda))^{-1},
\]
and use that $CS^{-1}=\epsilon A$, see~\eqref{eq:a2b}, to obtain~\eqref{eq:X13}. If $\alpha=\infty$, it is evident that \eqref{eq:X13} is equivalent to \eqref{eq:X12}.
\end{proof}

\subsection{The $M$-function in the free case}
For the Neumann solutions we find
\begin{align}
\Phi_{0}(x,\lambda)&=\cos(kx)P_{+}+\cosh(kx)P_{-}, \label{eq:phi_free_neum}\\
\Theta_{0}(x,\lambda)&=\frac{\sin(kx)}{k}P_{+}-\frac{\sinh(kx)}{k}P_{-}, \label{eq:theta_free_neum}
\end{align}
where the matrices $P_{\pm}$ are as in~\eqref{eq:Ppm} and $\lambda=k^{2}$. From here it follows that 
\begin{equation}
M_{0}(\lambda)=\frac{1}{k}\left(\ii P_{+}-P_{-}\right),
\label{eq:m-func_free_neum}
\end{equation}
if $\Re k>0$ and $\Im k>0$.
Using~\eqref{eq:X13}, from here we find
\begin{equation}
M_\alpha(\lambda)=
\left(\epsilon A+\frac{1}{k}(\ii P_+ -P_-)\right)\left(I-\frac{1}{k}\epsilon A(\ii P_+ -P_-)\right)^{-1},
\label{eq:m-func_free}
\end{equation}
if $\alpha$ is finite and 
\begin{equation}
M_{\infty}(\lambda)=k(\ii P_{+}+P_{-})
\label{eq:m-func_free_dirich}
\end{equation}
if $\alpha=\infty$. 

\subsection{The spectral pair in the free case}

Using~\eqref{eq:m-func_free}, an elementary calculation yields the expression
\begin{equation}
M_\alpha(\lambda)=\frac{\epsilon A\left[k^{2}-k(P_{+}+\ii P_{-})A-\ii\right]+\ii k(P_{+}+\ii P_{-})}{(k-k_{+})(k-k_{-})},
\label{eq:m-func_free_explicit}
\end{equation}
where $\lambda=k^{2}$ with $\Re k>0$, $\Im k>0$, and
\begin{equation}
 k_{\pm}:=\frac{1+\ii}{2}\left(\Re\alpha\pm\ii\sqrt{(\Re\alpha)^{2}+2(\Im\alpha)^{2}}\right).
\label{eq:def_kpm}
\end{equation}

Having computed the $M$-function, we find $\Sigma_{\alpha}$ by using the Stieltjes inversion formula, computing separately its absolutely continuous part $\Sigma_{\alpha;\text{ac}}$ and the point part $\Sigma_{\alpha;\text{p}}$. First, from the formula for the density
\[
 \frac{\dd \Sigma_{\alpha;\text{ac}}}{\dd \lambda}(\lambda)=\frac{1}{\pi}\lim_{\varepsilon\to0+}\Im M_{\alpha}(\lambda+\ii\varepsilon),
\]
one infers for $\lambda>0$, after a straightforward calculation, that
\begin{equation}
 \frac{\dd \Sigma_{\alpha;\text{ac}}}{\dd \lambda}(\lambda)=
 \frac{\dd \nu_{\alpha;\text{ac}}}{\dd \lambda}(\lambda) 
 \begin{pmatrix}
 1 & \psi_{\alpha}(\lambda) \\ \overline{\psi_{\alpha}(\lambda)} & 1
 \end{pmatrix}
\label{eq:Sigma_ac_free}
\end{equation}
with
\begin{align}
\frac{\dd \nu_{\alpha;\text{ac}}}{\dd \lambda}(\lambda)=\frac{1+|\alpha|^{2}}{2\pi}\frac{\sqrt{\lambda}\,|\sqrt{\lambda}-\alpha|^{2}}{(\sqrt{\lambda}\Re\alpha-|\alpha|^{2})^{2}+\lambda(\sqrt{\lambda}-\Re\alpha)^{2}}, 
\quad 
\psi_{\alpha;\text{ac}}(\lambda)=\frac{\sqrt{\lambda}-\alpha}{\sqrt{\lambda}-\overline{\alpha}} \nonumber\\
\label{eq:spec_pair_free}
\end{align}
for all $\alpha\in\bbC$. In the Dirichlet case $\alpha=\infty$, we find
\begin{equation}
\frac{\dd \nu_{\infty;\text{ac}}}{\dd \lambda}(\lambda)=\frac{\sqrt{\lambda}}{2\pi}, 
\quad 
\psi_{\infty;\text{ac}}(\lambda)=1.
\label{eq:spec_pair_dirich}
\end{equation}

Second, recall that $\Sigma_{\alpha;\text{p}}$ is supported on poles of $M_{\alpha}$ continued to the real line with the weights given by the corresponding residues. One readily sees from~\eqref{eq:m-func_free_explicit} that the only possible poles of $M_{\alpha}$ may occur at $k=k_{\pm}$ provided that $\alpha\neq0$ and $\alpha\neq\infty$. Recall that $k$ in~\eqref{eq:m-func_free_explicit} is confined to the first quadrant $\Re k>0$, $\Im k>0$. By inspection of~\eqref{eq:def_kpm}, one sees that the two points $k_{\pm}$ escape the closure of the first quadrant unless $\alpha>0$. Therefore $\Sigma_{\alpha;\text{p}}=0$ for all $\alpha\in(\bbC\setminus\bbR_{+})\cup\{\infty\}$.

Suppose $\alpha\in\bbR_{+}$. Then $k_{-}=\alpha$, $k_{+}=\ii\alpha$, and~\eqref{eq:m-func_free_explicit} simplifies to 
\[
 M_{\alpha}(\lambda)=\frac{\alpha k+\ii}{k-\ii\alpha}P_{+}-\frac{\alpha k+1}{k-\alpha}P_{-}.
\]
Using these formulas, a straightforward calculation yields
\[
\Sigma_{\alpha;\text{p}}(\{\mp\alpha^2\})=\lim_{\lambda\to\mp\alpha^2}(\mp\alpha^2-\lambda)M_{\alpha}(\lambda)=2\alpha(1+\alpha^{2})P_{\pm}.
\]
Hence
\[
\Sigma_{\alpha;\text{p}}=2\alpha(1+\alpha^{2})\left(\delta_{-\alpha^2}P_{+}+\delta_{\alpha^2}P_{-}\right),
\]
where $\delta_{\lambda}$ denotes the unit mass Dirac delta measure supported on the one-point set $\{\lambda\}$.

Bearing in mind Theorem~\ref{thm:a2}, we summarize the results in terms of the spectral pair $(\nu_{\alpha}, \psi_{\alpha})$ of $H$ with $q=0$ in the next proposition. Recall that it is sufficient consider $\nu_{\alpha}$ and $\psi_{\alpha}$ on $\bbR_{+}$ due to their parities.

\begin{proposition}
Let $(\nu_{\alpha},\psi_{\alpha})$ be the spectral pair of $H$ with $q=0$.
\begin{enumerate}[i)]
\item If $\alpha\in(\bbC\setminus\bbR_{+})\cup\{\infty\}$, then $\nu_{\alpha}=\nu_{\alpha;\mathrm{ac}}$ and $\psi_{\alpha}=\psi_{\alpha;\mathrm{ac}}$, where $\nu_{\alpha;\mathrm{ac}}$ and $\psi_{\alpha;\mathrm{ac}}$ are given by formulas~\eqref{eq:spec_pair_free} and~\eqref{eq:spec_pair_dirich} for all $\lambda>0$.
\item If $\alpha\in\bbR_{+}$, then $\nu_{\alpha}=\nu_{\alpha;\mathrm{ac}}+\nu_{\alpha;\mathrm{p}}$ and $\psi_{\alpha}=\psi_{\alpha;\mathrm{ac}}+\psi_{\alpha;\mathrm{p}}$, where $\nu_{\alpha;\mathrm{ac}}$ and $\psi_{\alpha;\mathrm{ac}}$ are given by formulas~\eqref{eq:spec_pair_free} and~\eqref{eq:spec_pair_dirich} for all $0<\lambda\neq\alpha^{2}$, and 
\[
 \nu_{\alpha;\mathrm{p}}=\alpha(1+\alpha^{2})\delta_{\alpha^{2}},
 \quad
  \psi_{\alpha;\mathrm{p}}(\lambda)=\begin{cases}
&\hskip9pt0  \quad \mbox{ for }\; 0<\lambda\neq\alpha^{2},\\
&-1 \quad \mbox{ for }\; \hskip22pt \lambda=\alpha^{2}.
\end{cases}
\]
\end{enumerate}
\end{proposition}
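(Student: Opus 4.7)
The plan is to read off the spectral pair directly from the explicit $M$-function \eqref{eq:m-func_free_explicit} via the Stieltjes inversion formula, since by Theorem~\ref{thm:a2} the matrix-valued spectral measure $\Sigma_\alpha$ of the hermitisation $\bH$ factors uniquely as $\dd\Sigma_\alpha = \bigl(\begin{smallmatrix}1 & \psi_\alpha \\ \overline{\psi_\alpha} & 1\end{smallmatrix}\bigr)\dd\nu_\alpha$. So the task reduces to computing $\Sigma_\alpha$ and then matching its diagonal entry with $\dd\nu_\alpha$ and its off-diagonal entry with $\psi_\alpha\dd\nu_\alpha$.

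First, I would handle the absolutely continuous part. Starting from \eqref{eq:m-func_free_explicit}, I would substitute $\lambda = x + \ii\varepsilon$ with $x>0$, expand $k = \sqrt{\lambda}$ to first order in $\varepsilon$, and take $\varepsilon \to 0^+$. The density is then $\frac{1}{\pi}\Im M_\alpha(x+\ii 0)$. After simplifying the denominator $|(k-k_+)(k-k_-)|^2$ using \eqref{eq:def_kpm} and splitting the numerator into its contribution to $P_+$, $P_-$, and to the off-diagonal, one arrives at a rank-one (in the off-diagonal sense) matrix whose diagonal entries coincide, giving \eqref{eq:Sigma_ac_free}. Direct algebraic comparison with the ansatz of Theorem~\ref{thm:a2} yields the stated formulas \eqref{eq:spec_pair_free}; for $\alpha = \infty$ the same procedure applied to \eqref{eq:m-func_free_dirich} gives \eqref{eq:spec_pair_dirich}.

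Next, I would identify the discrete part of $\Sigma_\alpha$. The Stieltjes inversion formula picks up point masses at real poles of $M_\alpha$ with weights equal to (minus) the residues. From \eqref{eq:m-func_free_explicit} the only candidate poles are at $k = k_\pm$, but only those $k_\pm$ lying in the closed first quadrant $\Re k \geq 0$, $\Im k \geq 0$ correspond to genuine poles on the real $\lambda$-axis (since throughout our construction $k$ is taken in the open first quadrant). A case-by-case inspection of \eqref{eq:def_kpm} shows this happens precisely when $\alpha \in \bbR_+$, in which case $k_- = \alpha$ gives a pole at $\lambda = \alpha^2$ and $k_+ = \ii\alpha$ gives a pole at $\lambda = -\alpha^2$. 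Computing the residues yields $\Sigma_{\alpha;\text{p}} = 2\alpha(1+\alpha^2)(\delta_{-\alpha^2}P_+ + \delta_{\alpha^2}P_-)$; since $2P_\pm = \bigl(\begin{smallmatrix}1 & \pm 1 \\ \pm 1 & 1\end{smallmatrix}\bigr)$, the factorisation of Theorem~\ref{thm:a2} identifies the point mass of $\nu_\alpha$ at $\pm\alpha^2$ as $\alpha(1+\alpha^2)$ and $\psi_\alpha(\pm\alpha^2) = \mp 1$. Restricting to $\bbR_+$ using the parity properties from Theorem~\ref{thm:a2} yields the stated form.

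The computation is elementary rather than conceptually hard; the main obstacle is organising the algebra in case (ii) cleanly. In particular, one must verify that the absolutely continuous density formula \eqref{eq:spec_pair_free} extends continuously across the point $\lambda = \alpha^2$ (where the apparent singularity of $|\sqrt{\lambda} - \alpha|^2 / [(\sqrt{\lambda}\Re\alpha - |\alpha|^2)^2 + \lambda(\sqrt{\lambda} - \Re\alpha)^2]$ must be checked not to interfere with the point mass), and that the case analysis in \eqref{eq:def_kpm} correctly distinguishes $\alpha \in \bbR_+$ from the remaining values of $\alpha \in \bbC$; both boil down to short direct checks.
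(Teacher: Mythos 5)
Your proposal is correct and follows essentially the same route as the paper: compute $\Sigma_\alpha$ from the explicit $M$-function \eqref{eq:m-func_free_explicit} via Stieltjes inversion, splitting into the absolutely continuous part (from $\frac1\pi\Im M_\alpha(\lambda+\ii 0)$) and the point part (from residues at those $k_\pm$ that stay in the closure of the first quadrant, which happens only for $\alpha\in\bbR_+$), and then read off $(\nu_\alpha,\psi_\alpha)$ using the factorisation of Theorem~\ref{thm:a2}. No gaps.
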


\section{Asymptotic properties of the spectral data}
\label{sec.t}
In this section, we prove Theorems~\ref{thm:asympt_Sigma} and~\ref{thm:t2}. More precisely, we combine the asymptotic formula \eqref{eq.Masymp1} for the $M$-function with a Tauberian theorem for matrix-valued measures and deduce the asymptotic formulas for $\Sigma([0,r])$ or $\Sigma([-r,0])$, as $r\to\infty$. 

\subsection{A Tauberian theorem}

The proof of Theorem~\ref{thm:asympt_Sigma} relies on the following version of a Tauberian theorem for matrix-valued measures. This theorem is suitable for our purpose and its more general form can be found in~\cite[Thm.~2.3]{BW} without proof, referring to its scalar version~\cite[Thm.~7.5]{B_89}. For completeness, we provide another short proof inspired by the ideas used in the proof of~\cite[Thm.~6.9.1]{S3}.

\begin{theorem}\label{thm:tauber-matrix}
 Let $M$ and $M^{0}$ be Herglotz--Nevanlinna matrix-valued functions with integral representations as in~\eqref{eq:intres} in terms of matrix-valued measures $\Sigma$ and $\Sigma^{0}$, respectively. Suppose there exists a scalar-valued positive function $g=g(r)$ defined for $r>0$ such that, for any $\lambda\in\bbC\setminus\bbR$, we have 
 \[
  M(r\lambda)=g(r)\left(M^{0}(\lambda)+\smallO(1)\right)
 \]
 as $r\to\infty$. If $0$ and $1$ are not point masses of $\Sigma^{0}$, then 
 \[
  \Sigma([0,r])=rg(r)\left(\Sigma^{0}([0,1])+\smallO(1)\right)
 \]
 as $r\to\infty$. Similarly, if $0$ and $-1$ are not point masses of $\Sigma^{0}$, then 
 \[
  \Sigma([-r,0])=rg(r)\left(\Sigma^{0}([-1,0])+\smallO(1)\right)
 \]
 as $r\to\infty$.
\end{theorem}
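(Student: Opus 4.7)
My plan is to reduce the matrix-valued statement to its scalar analogue by testing against vectors, and then to establish the scalar Tauberian step via Poisson smoothing, in the spirit of~\cite[Thm.~6.9.1]{S3}. For every $v\in\bbC^{n}$ (where $n$ is the matrix size), the set function $\sigma_{v}(\cdot):=\jap{\Sigma(\cdot)v,v}_{\bbC^{n}}$ is a positive scalar Borel measure on $\bbR$ (since $\Sigma$ takes positive semi-definite values), and the function $m_{v}(\lambda):=\jap{M(\lambda)v,v}_{\bbC^{n}}$ is its scalar Herglotz--Nevanlinna transform, obtained by sandwiching~\eqref{eq:intres} against $v$. Analogous definitions yield $\sigma_{v}^{0}$ and $m_{v}^{0}$. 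The hypothesis on $M,M^{0}$ passes immediately to the scalar functions, and the absence of point masses of $\Sigma^{0}$ at $\{0\}$, $\{\pm1\}$, combined with positive semi-definiteness, forces $\sigma_{v}^{0}(\{0\})=\sigma_{v}^{0}(\{\pm1\})=0$ for every $v$.

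\textbf{Scalar Tauberian step.} Define the rescaled scalar measures $\tilde\sigma_{r}(S):=\sigma_{v}(rS)/(rg(r))$. A direct change of variables in the Poisson integral attached to the Herglotz representation yields
\[
\int_{-\infty}^{\infty}\frac{\epsilon_{0}\,\dd\tilde\sigma_{r}(s)}{(s-x_{0})^{2}+\epsilon_{0}^{2}}=\frac{\Im m_{v}(r\lambda)}{g(r)},\qquad \lambda=x_{0}+\ii\epsilon_{0},\ \Im\lambda>0,
\]
and the hypothesis gives convergence of the right-hand side to $\Im m_{v}^{0}(\lambda)$ as $r\to\infty$. Local boundedness of these Poisson integrals over compact subsets of the upper half-plane yields local uniform boundedness $\tilde\sigma_{r}(K)\leq C_{K}$ for any compact $K\subset\bbR$, and the convergence of all Poisson transforms upgrades to vague convergence $\tilde\sigma_{r}\to\sigma_{v}^{0}$ on compact subsets of $\bbR$. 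Since $\{0\}$ and $\{1\}$ are continuity points of $\sigma_{v}^{0}$ by the previous paragraph, a Portmanteau-type argument gives
\[
\sigma_{v}([0,r])=rg(r)\bigl(\sigma_{v}^{0}([0,1])+\smallO(1)\bigr),\qquad r\to\infty,
\]
and analogously for $\sigma_{v}([-r,0])$.

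\textbf{Polarization.} The matrices $\Sigma([0,r])$ and $\Sigma^{0}([0,1])$ are Hermitian, so all their entries are recovered from quadratic forms by means of the standard identity
\[
4\jap{Ae_{i},e_{j}}=\sum_{k=0}^{3}\ii^{-k}\jap{A(e_{i}+\ii^{k}e_{j}),e_{i}+\ii^{k}e_{j}},
\]
valid for any Hermitian matrix $A$. Applying the scalar conclusion to the four test vectors $v=e_{i}+\ii^{k}e_{j}$ for $k=0,1,2,3$ and assembling via this identity yields the desired matrix asymptotics $\Sigma([0,r])=rg(r)\bigl(\Sigma^{0}([0,1])+\smallO(1)\bigr)$ entrywise, and the case of $[-r,0]$ is handled identically.

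\textbf{Main obstacle.} The technical heart of the argument lies in the scalar step: upgrading pointwise convergence of Poisson transforms to convergence of the masses $\tilde\sigma_{r}([0,1])$. The assumption that $0$ and $\pm 1$ are not point masses of $\Sigma^{0}$ enters precisely here, because without it the value $\tilde\sigma_{r}([0,1])$ could oscillate between its two one-sided limits and no limit would exist. Establishing the required local uniform bound on $\tilde\sigma_{r}$ also needs care, as one must extract it from the integral representation of $m_{v}$ rather than from any \emph{a priori} tightness hypothesis on $\Sigma$.
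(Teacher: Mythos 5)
Your proposal is correct, and it reaches the conclusion by a route that differs from the paper's in two respects. First, you reduce to the scalar case by polarization against test vectors and only then prove a scalar Tauberian statement, whereas the paper manipulates the rescaled matrix-valued measures $\Sigma_r$ directly (convergence of matrix-valued integrals is entrywise, so the polarization is not needed there; your version is equally valid since $\jap{\Sigma(\cdot)v,v}_{\bbC^2}$ is a positive scalar measure for every $v$). Second, and more substantively, the passage from convergence of Poisson transforms to convergence of $\tilde\sigma_r([0,1])$ is handled differently: the paper substitutes $\lambda=1/2+\ii\sqrt{y}$, continues the resulting integrals analytically in $y$ to the half-plane $\Re y>0$ via Morera and Vitali, differentiates in $y$ to obtain convergence against $(y+t^2)^{-k}$ for all $k$, and then uses density of polynomials in $1/(1+x^2)$ among even continuous functions on $\bbR\cup\{\infty\}$ before approximating the indicator of $[-1/2,1/2]$; you instead invoke weak-$*$ compactness, vague convergence and a Portmanteau argument at the continuity points $0,\pm1$. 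Both are legitimate; yours is the textbook Grommer--Hamburger route, while the paper's avoids any selection/compactness argument.

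The one thin spot is the sentence asserting that ``the convergence of all Poisson transforms upgrades to vague convergence $\tilde\sigma_r\to\sigma_v^0$''. Identifying every subsequential vague limit $\mu'$ with $\sigma_v^0$ is not automatic: since the Poisson kernel is not compactly supported, vague convergence only yields $P\mu'\leq P\sigma_v^0$, and the nonnegative harmonic difference $P\sigma_v^0-P\mu'$ could in principle contain a linear term $b\Im\lambda$ produced by mass of $\tilde\sigma_r$ escaping to infinity at rate $s^2$. One must observe that $\Im m_v^0$ is the Poisson transform of a measure with $\int \dd\sigma_v^0(t)/(1+t^2)<\infty$, so $\Im m_v^0(\ii y)/y\to0$ as $y\to\infty$, which forces $b=0$, hence $\mu'=\sigma_v^0$ and the full sequence converges. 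This is standard (it is essentially the content of \cite[Thm.~6.9.1]{S3}, which you cite), but it is the genuine Tauberian content of the lemma and should be stated explicitly; with that point supplied, your proof is complete.
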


\begin{proof}
Taking the imaginary parts, we infer from the assumptions that
\[
\lim_{r\to\infty}\frac{r}{g(r)}\int_{-\infty}^{\infty}\frac{\dd\Sigma(t)}{|t-r\lambda|^{2}}=
\int_{-\infty}^{\infty}\frac{\dd\Sigma^{0}(t)}{|t-\lambda|^{2}}
\]
for all $\lambda\in\bbC\setminus\bbR$. Changing the variable $t\mapsto rt$ in the integral on the left, the last formula can be written as
\[
\lim_{r\to\infty}\int_{-\infty}^{\infty}\frac{\dd\Sigma_{r}(t)}{|t-\lambda|^{2}}=
\int_{-\infty}^{\infty}\frac{\dd\Sigma^{0}(t)}{|t-\lambda|^{2}},
\]
where
\begin{equation}
 \Sigma_{r}(t):=\frac{\Sigma(rt)}{rg(r)}.
\label{eq:def_Sigma_r}
\end{equation}
(Here we abuse the notation slightly denoting by $\Sigma$ both the measure and its distribution function.) Putting $\lambda=1/2+\ii\sqrt{y}$, with $y>0$, we get
\[
\lim_{r\to\infty}\int_{-\infty}^{\infty}\frac{\dd\Sigma_{r}(t)}{(t-1/2)^{2}+y}=
\int_{-\infty}^{\infty}\frac{\dd\Sigma^{0}(t)}{(t-1/2)^{2}+y},
\]
or equivalently 
\begin{equation}
\lim_{r\to\infty}\int_{-\infty}^{\infty}\frac{\dd\Sigma_{r}(t+1/2)}{y+t^{2}}=
\int_{-\infty}^{\infty}\frac{\dd\Sigma^{0}(t+1/2)}{y+t^{2}}.
\label{eq:limit_rel_ala_simon_inproof}
\end{equation}

Further, we may proceed as in the proof of~\cite[Thm.~6.9.1]{S3}. By using Morera's and Vitali's theorems, we infer that the integrals in \eqref{eq:limit_rel_ala_simon_inproof} extend as analytic functions to the half-plane $\Re y>0$ and the convergence in \eqref{eq:limit_rel_ala_simon_inproof}, for $r\to\infty$, is local uniform therein.
Consequently, we may differentiate~\eqref{eq:limit_rel_ala_simon_inproof} with respect to $y$ any number of times, getting
\[
\lim_{r\to\infty}\int_{-\infty}^{\infty}\frac{\dd\Sigma_{r}(t+1/2)}{(y+t^{2})^{k}}=
\int_{-\infty}^{\infty}\frac{\dd\Sigma^{0}(t+1/2)}{(y+t^{2})^{k}}
\]
for any $k\in\bbN$. Since polynomials in the variable $1/(1+x^{2})$ are dense in the space of continuous even functions on $\bbR\cup\{\infty\}$, the last limit formula extends to 
\[
\lim_{r\to\infty}\int_{-\infty}^{\infty}h(t)\frac{\dd\Sigma_{r}(t+1/2)}{1+t^{2}}=
\int_{-\infty}^{\infty}h(t)\frac{\dd\Sigma^{0}(t+1/2)}{1+t^{2}}.
\]
for all bounded continuous even functions $h$ on $\bbR$ such that 
\[
\lim_{x\to\infty}h(x)=\lim_{x\to-\infty}h(x).
\]
Approximating the function 
\[
h(t)=(1+t^{2})\chi_{[-1/2,1/2]}(t),
\]
by continuous functions, from here one obtains 
\[
 \lim_{r\to\infty}\Sigma_{r}([0,1])=\Sigma^{0}([0,1]),
\]
if $\Sigma^{0}(\{0,1\})=0$. Recalling definition~\eqref{eq:def_Sigma_r}, the first statement follows. Proof of the second asymptotic formula for $\Sigma([-r,0])$ proceeds analogously.
\end{proof}

\subsection{Proof of Theorem~\ref{thm:asympt_Sigma}}
The proof relies on Theorem~\ref{thm:tauber-matrix} and on the asymptotics of the $M$-function from Proposition~\ref{prop:M_func_asympt}. Depending on the boundary parameter $\alpha\in\bbC\cup\{\infty\}$, we distinguish two cases.

\emph{Case $\alpha\neq\infty$:} Denote by $M_{0}^{0}$ the $M$-function of $\bH$ with $Q=0$ and $\alpha=0$.
As computed in \eqref{eq:m-func_free_neum}, we have
\begin{equation}
M_{0}^{0}(\lambda)=\frac1{k}(\ii P_{+}-P_{-}), 
\label{eq:freeMf}
\end{equation}
where $\lambda=k^2$ and $\Im k>0$, $\Re k>0$. Comparing this with the asymptotic formula  for the $M$-function from Proposition~\ref{prop:M_func_asympt}, we can write
\[
M_\alpha(\lambda)-\epsilon A
=
(1+|\alpha|^{2})M_{0}^{0}(\lambda)+\smallO(1/\sqrt{|\lambda|})
\]
as $\lambda\to\infty$ in the upper half-plane. Due to the symmetry~\eqref{eq:a4}, the same formula holds as $\lambda\to\infty$ in the lower half-plane. We can write this as
\begin{equation}
M_\alpha(r\lambda)-\epsilon A
=
\frac{1+|\alpha|^{2}}{\sqrt{r}}M_{0}^{0}(\lambda)+\smallO(1/\sqrt{r})
\label{eq:M_pi/2_asympt_inproof}
\end{equation}
as $r\to\infty$, for any $\lambda\in\bbC\setminus\bbR$ fixed. 
Now we can apply Theorem~\ref{thm:tauber-matrix} to the Herglotz--Nevanlinna functions
\[
M(\lambda):=M_{\alpha}(\lambda)-\epsilon A \quad\mbox{ and }\quad M^{0}(\lambda):=M_{0}^{0}(\lambda),
\]
with $g(r):=(1+|\alpha|^{2})/\sqrt{r}$ to conclude that 
\begin{align*}
 \Sigma([0,r])&=(1+|\alpha|^{2})\sqrt{r}\left(\Sigma^{0}([0,1])+\smallO(1)\right), \\ 
 \Sigma([-r,0])&=(1+|\alpha|^{2})\sqrt{r}\left(\Sigma^{0}([-1,0])+\smallO(1)\right)
\end{align*}
as $r\to\infty$. The spectral measure $\Sigma^{0}$ for the free Neumann case has been computed in \eqref{eq:Sigma_ac_free}, \eqref{eq:spec_pair_free}. It has no point part and its density reads 
\[
\frac{\dd\Sigma^{0}}{\dd\lambda}(\lambda)=\frac1{2\pi}\frac1{\sqrt{\abs{\lambda}}}
\begin{pmatrix}1&\sign\lambda\\ \sign\lambda&1\end{pmatrix}.
\]
Integrating over $\lambda$, we find 
\[
\Sigma^{0}([0,1])=\frac1\pi\begin{pmatrix}1&1\\1&1\end{pmatrix},
\quad
\Sigma^{0}([-1,0])=\frac1\pi\begin{pmatrix}1&-1\\-1&1\end{pmatrix}.
\]
Putting this all together, we obtain the claim of Theorem~\ref{thm:asympt_Sigma} for $\alpha\neq\infty$.

\emph{Case $\alpha=\infty$:} Although this case is to be treated separately, the arguments are analogous. 
Denoting by $M_{\infty}^{0}$ the $M$-function of $\bH$ with $Q=0$ and $\alpha=\infty$, instead of \eqref{eq:freeMf} and \eqref{eq:M_pi/2_asympt_inproof}, we find
\[
M_{\infty}^{0}(\lambda)=k(\ii P_{+}+P_{-})\quad\text{ and }\quad
M_{\infty}(r\lambda)=\sqrt{r}\left[M_{\infty}^{0}(\lambda)+\smallO(1)\right]
\]
for $r\to\infty$ and $\lambda\in\bbC\setminus\bbR$, see \eqref{eq:m-func_free_dirich}. Applying Theorem~\ref{thm:tauber-matrix} to Herglotz--Nevanlinna functions $M_{\infty}$ and $M_{\infty}^{0}$ with $g(r):=\sqrt{r}$, we obtain
\[
 \Sigma([0,r])=r^{3/2}\left(\Sigma^{0}([0,1])+\smallO(1)\right),
 \quad
 \Sigma([-r,0])=r^{3/2}\left(\Sigma^{0}([-1,0])+\smallO(1)\right)
\]
for $r\to\infty$, where this time $\Sigma^{0}$ stands for the spectral measure of the free Dirichlet case. The point part of $\Sigma^{0}$ is void and the density has been computed in~\eqref{eq:Sigma_ac_free}, \eqref{eq:spec_pair_dirich}: 
\[
\frac{\dd\Sigma^{0}}{\dd\lambda}(\lambda)=\frac1{2\pi}\sqrt{\abs{\lambda}}
\begin{pmatrix}1&\sign\lambda\\ \sign\lambda&1\end{pmatrix}.
\]
Integrating over $\lambda$, we find 
\[
\Sigma^{0}([0,1])=\frac1{3\pi}\begin{pmatrix}1&1\\1&1\end{pmatrix},
\quad
\Sigma^{0}([-1,0])=\frac1{3\pi}\begin{pmatrix}1&-1\\-1&1\end{pmatrix}.
\]
Putting this all together, we obtain the claim of Theorem~\ref{thm:asympt_Sigma} for $\alpha=\infty$. 
\qed

\section{The spectral pair at a simple eigenvalue}\label{sec.f}

\subsection{The distinguished solution}

\begin{proof}[Proof of Lemma~\ref{lma.b1}]
Since $\lambda>0$, it is also a simple eigenvalue of $\bH$. 
Consider the \emph{normalized} eigenvector $F$ of $\bH$ corresponding to this eigenvalue:
\[
\bH F=\lambda F, \qquad F=\begin{pmatrix}f_1\\ f_2\end{pmatrix}.
\]
Here $F\in L^2(\bbR_{+};\bbC^{2})$ and satisfies the boundary condition \eqref{eq:a2a0}. 
By inspection, the function 
\[
\widetilde F=\begin{pmatrix}\overline{f_2}\\ \overline{f_1}\end{pmatrix}
\]
also satisfies the differential equation $\bH \widetilde{F}=\lambda\widetilde{F}$ and the boundary condition \eqref{eq:a2a0}. Since, by assumption, $\lambda$ is a simple eigenvalue of $\bH$, we have
\[
\begin{pmatrix} \overline{f_2}\\ \overline{f_1}\end{pmatrix}
=
e^{\ii\vartheta}
\begin{pmatrix} f_1\\f_2\end{pmatrix}
\]
with some unimodular complex number $e^{\ii\vartheta}$. Then we see that
\[
e:=\sqrt{2}e^{-\ii\vartheta/2}\,\overline{f_1}
\]
satisfies the ``anti-linear eigenvalue equation'' \eqref{f1} and the boundary condition \eqref{eq.r4b}. The factor $\sqrt{2}$ ensures that $e$ has norm one. At this point we also note that the normalised eigenvector of $\bH$ can be written as
\[
\frac1{\sqrt{2}}
\begin{pmatrix}\overline{e}\\ e\end{pmatrix}.
\]

Let us prove the uniqueness of $e$, up to multiplication by $\pm1$. Suppose that two normalised solutions $e_1$ and $e_2$ satisfy the required assumptions. Then $\bH$ has normalised eigenvectors
\[
\frac1{\sqrt{2}}
\begin{pmatrix}\overline{e_1} \\e_1\end{pmatrix}
\quad \text{ and }\quad
\frac1{\sqrt{2}}
\begin{pmatrix}\overline{e_2} \\e_2\end{pmatrix}
\]
both corresponding to the eigenvalue $\lambda$. By the assumption that $\lambda$ is simple, we have
\[
\begin{pmatrix}\overline{e_1} \\e_1\end{pmatrix}
=e^{\ii\vartheta}
\begin{pmatrix}\overline{e_2} \\e_2\end{pmatrix}
\]
with some unimodular complex number $e^{\ii\vartheta}$. Since
\[
e_1=e^{\ii\vartheta} e_2\quad\text{ and }\quad
\overline{e_1}=e^{\ii\vartheta}\,\overline{e_2}
\]
for non-zero $e_1$ and $e_2$, we conclude that $e^{\ii\vartheta}=e^{-\ii\vartheta}$, i.e. $e^{\ii\vartheta}=\pm1$.
\end{proof}

\subsection{Computing $\Sigma(\{\lambda\})$ for a simple singular value $\lambda>0$}
Here we compute the spectral measure $\Sigma(\{\lambda\})$ in the case when $\lambda>0$ is a simple eigenvalue of $\abs{H}$. 

We start by recalling the explicit formula for the resolvent kernel of $\bH$. 
For $\lambda\in\bbC\setminus\bbR$, the resolvent $(\bH-\lambda)^{-1}$ is an integral operator acting on $L^2(\bbR_+;\bbC^2)$; let $\bR(x,y;\lambda)$ be the $2\times 2$ matrix-valued integral kernel of this operator. We recall that $\Phi$ and $X$ are the solutions of the eigenvalue equation \eqref{eq:init_diff_eq} specified by \eqref{eq:phi,theta_bc_cond} and \eqref{eq:X}. 

\begin{proposition}\label{prop:resolvent_kernel}
For all $\lambda\in\bbC\setminus\bbR$, we have 
\begin{equation}
\bR(x,y;\lambda)
=
\begin{cases}
-X(x,\lambda)\Phi(y,\overline{\lambda})^*, \quad x\geq y,
\\
-\Phi(x,\lambda)X(y,\overline{\lambda})^*, \quad x<y.
\end{cases}
\label{e13}
\end{equation}
\end{proposition}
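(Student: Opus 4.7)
The plan is to verify the formula directly by showing that, for $G$ in a dense subspace of $L^{2}(\bbR_{+};\bbC^{2})$, the function
\[
F(x):=\int_0^\infty\bR(x,y;\lambda)\,G(y)\,\dd y = -X(x,\lambda)A(x) - \Phi(x,\lambda)B(x),
\]
with $A(x):=\int_0^x \Phi(y,\overline{\lambda})^* G(y)\,\dd y$ and $B(x):=\int_x^\infty X(y,\overline{\lambda})^* G(y)\,\dd y$, lies in $\Dom\bH$ and satisfies $(\bH-\lambda)F=G$. Since $\bH$ is self-adjoint and $\lambda\in\bbC\setminus\bbR$, its resolvent is unique, identifying the integral operator with $(\bH-\lambda)^{-1}$. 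I would take $G\in C_{c}^{\infty}(\bbR_{+};\bbC^{2})$ first and extend by density, using $L^{2}$-boundedness of the integral operator (readable off the spectral representation of Proposition~\ref{thm.sp.decomp}).

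The core of the argument will be a pair of \emph{dual Wronskian} identities: for every $x\ge 0$ and $\lambda\in\bbC\setminus\bbR$,
\begin{align*}
\Phi(x,\lambda)X(x,\overline{\lambda})^{*} &= X(x,\lambda)\Phi(x,\overline{\lambda})^{*},\\
X'(x,\lambda)\Phi(x,\overline{\lambda})^{*}-\Phi'(x,\lambda)X(x,\overline{\lambda})^{*} &= \epsilon.
\end{align*}
Substituting $X=\Theta-\Phi M_{\alpha}$ and using the symmetry $M_{\alpha}(\overline{\lambda})^{*}=M_{\alpha}(\lambda)$ from~\eqref{eq:a4}, the $M_{\alpha}$-contributions cancel and these reduce to $\Phi\Theta^{*}=\Theta\Phi^{*}$ and $\Theta'\Phi^{*}-\Phi'\Theta^{*}=\epsilon$.

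Given these identities, the verification of $(\bH-\lambda)F=G$ will be routine. A single differentiation produces the term $[\Phi(x,\lambda)X(x,\overline{\lambda})^{*}-X(x,\lambda)\Phi(x,\overline{\lambda})^{*}]G(x)$ from the variable endpoints of $A$ and $B$; this vanishes by the first dual identity, so $F'$ is smooth. Differentiating again and using the ODEs satisfied by $\Phi$ and $X$, the contributions containing $A$ and $B$ combine with $(Q-\lambda)F$ to cancel, leaving the single term
\[
\epsilon\bigl[X'(x,\lambda)\Phi(x,\overline{\lambda})^{*}-\Phi'(x,\lambda)X(x,\overline{\lambda})^{*}\bigr]\,G(x) = \epsilon\cdot\epsilon\,G(x)=G(x)
\]
by the second dual identity together with $\epsilon^{2}=I$. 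The boundary condition~\eqref{eq:a2a0} at $x=0$ holds because $G$ vanishes near $0$, so $F(x)=-\Phi(x,\lambda)B(0)$ for small~$x$ and thus inherits the boundary condition from $\Phi$; the $L^{2}$-decay at infinity comes from that of the columns of $X(\cdot,\lambda)$.

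The main obstacle will be establishing the two dual identities in this sign-indefinite setting. My plan is to repackage the standard Wronskian relations~\eqref{eq:wronsk_1} and~\eqref{eq:wronsk_2} as a single $4\times4$ matrix equation $\mathcal{Z}(x,\overline{\lambda})^{*}\,J\,\mathcal{Z}(x,\lambda)=K$, where
\[
\mathcal{Z}(x,\lambda):=\begin{pmatrix}\Phi & \Theta\\ \Phi' & \Theta'\end{pmatrix},\quad J:=\begin{pmatrix}0 & -\epsilon\\ \epsilon & 0\end{pmatrix},\quad K:=\begin{pmatrix}0 & -I\\ I & 0\end{pmatrix}.
\]
The fundamental matrix $\mathcal{Z}(x,\lambda)$ is invertible for every $x\geq 0$: in companion form $\mathcal{Z}'=\mathcal{A}\mathcal{Z}$ the coefficient $\mathcal{A}$ has vanishing trace, so $\det\mathcal{Z}(x,\lambda)$ is conserved, while $\det\mathcal{Z}(0,\lambda)\neq 0$ follows from~\eqref{eq:S-C_id}. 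Inverting the displayed identity then yields $\mathcal{Z}(x,\lambda)\,K^{-1}\,\mathcal{Z}(x,\overline{\lambda})^{*}=J^{-1}$ (with $J^{-1}=-J$, since $J^{2}=-I$), and the two reduced identities for $\Phi,\Theta$ are precisely the $(1,1)$ and $(2,1)$ blocks of this relation.
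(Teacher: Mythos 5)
Your proposal is correct and follows essentially the same route as the paper's proof: apply the kernel to a compactly supported function, differentiate twice using the two ``dual'' identities $\Phi(x,\lambda)X(x,\overline{\lambda})^{*}=X(x,\lambda)\Phi(x,\overline{\lambda})^{*}$ and $X'(x,\lambda)\Phi(x,\overline{\lambda})^{*}-\Phi'(x,\lambda)X(x,\overline{\lambda})^{*}=\epsilon$ (these are exactly \eqref{eq.X5}--\eqref{eq.X6}, obtained from \eqref{eq.X3}--\eqref{eq.X4} by the same cancellation of the $M_\alpha$-terms that you describe), and then check the boundary condition at $0$ and the $L^2$-decay coming from $X(\cdot,\lambda)$. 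The only point of divergence is how \eqref{eq.X3}--\eqref{eq.X4} themselves are proved: the paper fixes $y$, shows that $R(x)=\Theta(x,\lambda)\Phi(y,\overline{\lambda})^{*}-\Phi(x,\lambda)\Theta(y,\overline{\lambda})^{*}$ has Cauchy data $R(y)=0$, $R'(y)=\epsilon$ by comparing Wronskians against $\Phi(\cdot,\overline{\lambda})^{*}$ and $\Theta(\cdot,\overline{\lambda})^{*}$, and invokes uniqueness; you instead package \eqref{eq:wronsk_1}--\eqref{eq:wronsk_2} into the $4\times4$ relation $\mathcal{Z}(x,\overline{\lambda})^{*}J\mathcal{Z}(x,\lambda)=K$ and invert it, using Liouville's formula and \eqref{eq:S-C_id} for the invertibility of $\mathcal{Z}(0,\lambda)$. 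I checked the block computation: the $(1,1)$ and $(2,1)$ blocks of $\mathcal{Z}(x,\lambda)K^{-1}\mathcal{Z}(x,\overline{\lambda})^{*}=J^{-1}$ do give precisely the two reduced identities, so this alternative derivation is valid and arguably a little slicker, at the cost of the extra invertibility check.
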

Formulas of this genre are well known, but for completeness we outline the proof in Appendix, see Section~\ref{sec.reskernel}.

\begin{lemma}\label{lma.e2}
Let $\lambda>0$ be a simple eigenvalue of $\bH$ with a normalised eigenvector $F_{\lambda}\in L^2(\bbR_+;\bbC^2)$. Then the point mass of the spectral measure $\Sigma$ at $\lambda$ is the $2\times 2$ matrix
\begin{equation}
\Sigma(\{\lambda\})=\jap{\cdot,L_\alpha(F_{\lambda})}_{\bbC^2}\, L_\alpha(F_{\lambda}),
\label{e12}
\end{equation}
where
\[
L_\alpha(F_{\lambda}):=C\epsilon F_{\lambda}'(0)-S F_{\lambda}(0).
\]
\end{lemma}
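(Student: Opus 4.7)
The idea is to compute the residue of the resolvent kernel at $\lambda$ in two different ways: once via Proposition~\ref{prop:resolvent_kernel}, which exposes the residue of the $M$-function, and once via the projection-valued spectral theorem, which exposes $F_\lambda$. Comparison forces the claim.

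\emph{Step 1 (Residue of $M_\alpha$ at a point mass).} From the Herglotz--Nevanlinna representation \eqref{eq:intres}, if $\lambda_0\in\bbR$ is a point mass of $\Sigma$, then $M_\alpha$ extends meromorphically across $\lambda_0$ with $\mathop{\mathrm{Res}}_{\lambda=\lambda_0}M_\alpha(\lambda)=-\Sigma(\{\lambda_0\})$. In particular, $\Sigma(\{\lambda_0\})$ is positive semi-definite and Hermitian.

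\emph{Step 2 (Resolvent kernel and its residue).} Starting from Proposition~\ref{prop:resolvent_kernel} for $x\leq y$ and using the definition \eqref{eq:X} of $X$ together with the symmetry $M_\alpha(\overline\lambda)^*=M_\alpha(\lambda)$ from \eqref{eq:a4}, I would write
\[
\bR(x,y;\lambda)=-\Phi(x,\lambda)\Theta(y,\overline\lambda)^*+\Phi(x,\lambda)M_\alpha(\lambda)\Phi(y,\overline\lambda)^*, \qquad x\leq y.
\]
Since $\Phi$ and $\Theta$ are entire in $\lambda$, the only pole at $\lambda_0\in\bbR$ comes from $M_\alpha$, so
\[
\mathop{\mathrm{Res}}_{\lambda=\lambda_0}\bR(x,y;\lambda)=-\Phi(x,\lambda_0)\,\Sigma(\{\lambda_0\})\,\Phi(y,\lambda_0)^*.
\]
On the other hand, since $\lambda_0$ is a simple eigenvalue of $\bH$ with normalised eigenvector $F_\lambda$, the resolvent has the standard expansion $\bR(\lambda)=-P_{\lambda_0}/(\lambda-\lambda_0)+\text{regular}$ with $P_{\lambda_0}=\jap{\cdot,F_\lambda}F_\lambda$, so the residue kernel equals $-F_\lambda(x)F_\lambda(y)^*$.

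\emph{Step 3 (Factorisation).} Equating the two expressions gives
\begin{equation}
\Phi(x,\lambda_0)\,\Sigma(\{\lambda_0\})\,\Phi(y,\lambda_0)^*=F_\lambda(x)F_\lambda(y)^*
\label{eq:plan_1}
\end{equation}
for all $x,y\geq 0$. Since the right-hand side is a rank-one positive semi-definite kernel and $F_\lambda\not\equiv 0$, the Hermitian matrix $\Sigma(\{\lambda_0\})$ is rank one and can be written as $\Sigma(\{\lambda_0\})=ww^*$ for some $w\in\bbC^2\setminus\{0\}$ (unique up to a unimodular scalar). Fixing a $y_0$ with $F_\lambda(y_0)\neq 0$ in \eqref{eq:plan_1}, I conclude that $\Phi(x,\lambda_0)w=c\,F_\lambda(x)$ for some constant $c\in\bbC$, and setting $x=y$ yields $\abs{c}=1$.

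\emph{Step 4 (Reading off $w$ from the boundary values).} Using the initial conditions \eqref{eq:phi,theta_bc_cond}, $\Phi(0,\lambda_0)=S$ and $\epsilon\Phi'(0,\lambda_0)=-C$, I obtain
\[
SF_\lambda(0)=c^{-1}S^2w, \qquad C\epsilon F_\lambda'(0)=-c^{-1}C^2w.
\]
Subtracting and invoking the identity $S^2+C^2=I$ from \eqref{eq:S-C_id} gives
\[
L_\alpha(F_\lambda)=C\epsilon F_\lambda'(0)-SF_\lambda(0)=-c^{-1}(S^2+C^2)w=-c^{-1}w,
\]
hence $w=-c\,L_\alpha(F_\lambda)$ and
\[
\Sigma(\{\lambda_0\})=ww^*=\abs{c}^2\,L_\alpha(F_\lambda)L_\alpha(F_\lambda)^*=L_\alpha(F_\lambda)L_\alpha(F_\lambda)^*,
\]
which is precisely the rank-one operator $\jap{\cdot,L_\alpha(F_\lambda)}_{\bbC^2}L_\alpha(F_\lambda)$.

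\emph{Expected obstacle.} None of the steps is technically heavy. The main thing to keep straight is the bookkeeping between the two sides of \eqref{eq:plan_1}: both $\Sigma(\{\lambda_0\})$ and the spectral projection are rank-one, so the identification is almost forced, but one has to verify the compatibility of the factorisation across all $x,y$ (not just at a single point), which is why the full factorisation $\Phi(x,\lambda_0)w=cF_\lambda(x)$ via the rank-one structure of \eqref{eq:plan_1} is the key intermediate step.
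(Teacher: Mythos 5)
Your overall strategy is the same as the paper's: compare the singular part of the resolvent kernel at $\lambda$ computed once through Proposition~\ref{prop:resolvent_kernel} (which isolates the contribution of $M_\alpha$) and once through the spectral theorem, arrive at the identity $\Phi(x,\lambda)\Sigma(\{\lambda\})\Phi(y,\lambda)^*=F_\lambda(x)F_\lambda(y)^*$, and then read off $\Sigma(\{\lambda\})$ from the boundary data of $\Phi$. However, Steps~1 and~2 as stated contain a genuine error: you assert that $M_\alpha$ extends meromorphically across $\lambda_0$ with residue $-\Sigma(\{\lambda_0\})$ and that the resolvent has the expansion $\bR(\lambda)=-P_{\lambda_0}/(\lambda-\lambda_0)+\text{regular}$. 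Neither holds here, because $\lambda_0$ is \emph{not} an isolated point of the spectrum: for a bounded potential $Q$ the essential spectrum of $\bH$ is all of $\bbR$ (as the free case in Section~\ref{sec:examp} shows, the a.c.\ part of $\Sigma$ fills the whole line), so every eigenvalue is embedded and there is no meromorphic continuation and no isolated pole. The correct, and standard, substitute — which is what the paper uses — is the boundary limit $-\ii\delta\, M_\alpha(\lambda_0+\ii\delta)\to\Sigma(\{\lambda_0\})$ as $\delta\to0_+$ (immediate from the Herglotz representation \eqref{eq:intres} by dominated convergence) together with $-\ii\delta\,(\bH-\lambda_0-\ii\delta)^{-1}\to\jap{\cdot,F_\lambda}F_\lambda$ in the weak operator topology from the spectral theorem. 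With "residue" replaced by these limits everywhere, your Steps~1--3 become the paper's argument verbatim, so the flaw is repairable without changing the structure; but as written the two residue claims are false statements, not just informal shorthand.

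Your Steps~3 and~4 then diverge mildly from the paper in how $\Sigma(\{\lambda_0\})$ is extracted from the kernel identity. You factor $\Sigma(\{\lambda_0\})=ww^*$, deduce $\Phi(x,\lambda_0)w=cF_\lambda(x)$ with $\abs{c}=1$, and evaluate at $x=0$ using \eqref{eq:phi,theta_bc_cond} and $S^2+C^2=I$; the algebra there is correct (one should also note that $v\mapsto\Phi(\cdot,\lambda_0)v$ is injective — from the initial conditions, $\Phi(x,\lambda_0)v\equiv0$ forces $Sv=Cv=0$, hence $v=0$ — to justify that $\Sigma(\{\lambda_0\})$ cannot have rank two). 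The paper instead applies the boundary functional $L_\alpha$ in both the $x$ and $y$ variables directly to the identity, using $L_\alpha(\Phi(\cdot,\lambda))=-I$, which collapses the left-hand side to $\Sigma(\{\lambda\})$ in one line and avoids the rank-one bookkeeping. Both routes are valid; the paper's is shorter.
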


\begin{proof}
From the integral representation \eqref{eq:intres} for $M_\alpha(z)$ it follows that  
\begin{equation}
-\ii\delta M_\alpha(\lambda+\ii\delta)\to \Sigma(\{\lambda\})
\label{e20}
\end{equation}
as $\delta\to0_+$. 
Next, by Proposition~\ref{prop:resolvent_kernel} we can write the resolvent kernel as
\[
\bR(x,y;z)=\bR_1(x,y;z)+\bR_2(x,y;z),
\]
where
\begin{equation}
\bR_1(x,y;z):=\Phi(x,z)M_\alpha(z)\Phi(y,\overline{z})^*
\label{eq:def_R1}
\end{equation}
and
\begin{equation}
\bR_2(x,y;z):=
\begin{cases}
-\Theta(x,z)\Phi(y,\overline{z})^*, \quad x\geq y,
\\
-\Phi(x,z)\Theta(y,\overline{z})^*, \quad x<y.
\end{cases}
\label{eq:def_R2}
\end{equation}
Since $\bR_2(x,y;\cdot)$ is an entire function for all $x,y$, we deduce from here and \eqref{e20} that
\begin{equation}
-\ii\delta \bR(x,y;\lambda+\ii\delta)\to \Phi(x,\lambda)\Sigma(\{\lambda\})\Phi(y,\lambda)^*
\label{e21}
\end{equation}
as $\delta\to0_+$, uniformly over $x,y$ in compact subsets of $\bbR_+$.

On the other hand, by the spectral theorem for self-adjoint operators, 
\[
-\ii\delta (\bH-\lambda-\ii\delta)^{-1}\to \jap{\cdot, F_{\lambda}}_{L^2}F_{\lambda},
\]
in the weak operator topology as $\delta\to0_+$. Here the right-hand side is the rank one projection in $L^2(\bbR_+;\bbC^2)$ onto the one-dimensional eigenspace spanned by the eigenvector $F_{\lambda}$. Comparing this with \eqref{e21}, we conclude that 
\begin{equation}
\Phi(x,\lambda)\Sigma(\{\lambda\})\Phi(y,\lambda)^*
=
F_{\lambda}(x)F_{\lambda}(y)^*.
\label{e22}
\end{equation}
Now let $L_\alpha$ be the differential expression \eqref{e11a}. 
By the boundary condition \eqref{eq:phi,theta_bc_cond}, we have 
\[
L_\alpha\left(\Phi(\cdot,\lambda)\right)=-I.
\]
Using this and applying $L_\alpha$ to \eqref{e22} in both $x$ and $y$ variables, we arrive at the desired conclusion \eqref{e12}. 
The proof of Lemma~\ref{lma.e2} is complete. 
\end{proof}

\subsection{Proof of Theorem~\ref{thm.5.4}}
Suppose, to get a contradiction, that $\ell_\alpha(e)=0$. Then from here and the boundary condition \eqref{eq.r4b} we find $e(0)=e'(0)=0$. Considering the anti-linear eigenvalue equation \eqref{f1} for $e$ as a system of two equations for the real and imaginary parts of $e$, we conclude that $e=0$, which contradicts the normalisation of $e$. Thus, $\ell_\alpha(e)\not=0$.

As discussed in the proof of Lemma~\ref{lma.b1}, we can write the normalised eigenvector of $\bH$ corresponding to the eigenvalue $\lambda$ as 
\[
F_\lambda=\frac1{\sqrt{2}}\begin{pmatrix}\overline{e}\\ e\end{pmatrix}.
\]
By Lemma~\ref{lma.e2} and definitions~\eqref{eq:a2b} and~\eqref{e11a}, we have 
\[
L_\alpha(F_{\lambda})=
\frac1{\sqrt{2}}\begin{pmatrix}\overline{\ell_\alpha(e)}\\ {\ell_\alpha(e)}\end{pmatrix}
\]
and
\[
\Sigma(\{\lambda\})
=
\frac12
\begin{pmatrix}
\abs{\ell_\alpha(e)}^2& \overline{\ell_\alpha(e)^2}
\\
\ell_\alpha(e)^2&\abs{\ell_\alpha(e)}^2
\end{pmatrix}.
\]
From here and the representation \eqref{eq:a1} for the measure $\Sigma$ in terms of $\nu$ and $\psi$, we get the required formulas \eqref{e11}.
\qed

\section{Concluding remarks: possible extensions}\label{sec:misc}

\subsection{Schr{\" o}dinger operators on compact intervals}
Our results are stated for Schr{\" o}dinger operators $H$ on the half-line $\bbR_{+}$. However, little additional effort is needed to adapt these results to operators $H$ on the interval $(0,b)$ with $b<\infty$ and an additional boundary condition at the right endpoint $b$. Below we briefly discuss the minor modifications necessary to adapt to this case.

For $b$ finite, the operator $H$ is defined by~\eqref{eq:def_H} on functions from $W^{2,2}([0,b])$ that satisfy the boundary conditions 
\begin{equation}
\begin{aligned}
f'(0)+\alpha f'(0)&=0,\\
f'(b)+\beta f'(b)&=0,
\end{aligned}
\label{eq:bc_interval}
\end{equation}
where $\alpha,\beta\in\bbC\cup\{\infty\}$ are boundary parameters. The corresponding operator $\bH$ is still given by~\eqref{a0} but is acting on functions $F\in W^{2,2}([0,b];\bbC^{2})$ satisfying the boundary conditions~\eqref{eq:a2a0} and~\eqref{eq:bc_b}. The $M$-function $M_{\alpha}(b;\lambda)$ is now defined by~\eqref{eq:def_M_b} and the solution $X$ is replaced by $X_{b}$ defined in~\eqref{eq:def_chi_b}; the dependence on $\beta$ is suppressed by the notation. Instead of the square integrability at infinity, $X_{b}$ satisfies the boundary condition~\eqref{eq:bc_b} at the right endpoint $b$. The $M$-function $M_{\alpha}(b;\lambda)$ has the same integral representation as in~\eqref{eq:intres}, giving rise to the spectral measure~$\Sigma$. Moreover, the asymptotic formulas of Proposition~\ref{prop:M_func_asympt} remain valid in the case $b<\infty$ independently of the boundary condition at $b$, see Remark~\ref{rem:M_asympt_compact}.

Theorem~\ref{thm:a2} holds in exactly the same form since its proof relies on the symmetries of $\bH$ which remain valid for finite $b$. As a result, we have the notion of spectral pair $(\nu,\psi)$ assigned to $\bH$. Let us remark that the spectral measure $\Sigma$ is always discrete (see for example \cite[Chp.~9]{Atk}) and is supported on isolated eigenvalues of $\bH$ that form a sequence accumulating at $\pm\infty$. It follows that the even measure $\nu$ is supported on points $\pm\lambda_{n}$, where $0\leq\lambda_{0}<\lambda_{1}<\dots$ are the singular values of $H$, i.e. the eigenvalues of $|H|$. Their multiplicities can be either 1 or 2. If a singular value $\lambda>0$ is simple, the formulas from~\eqref{e11} still hold with the normalised solution $e\in L^{2}(0,b)$ satisfying equation~\eqref{f1} and boundary conditions~\eqref{eq:bc_interval}. Such distinguished solution $e$ exists and is unique up to a sign which can be proved similarly to Lemma~\ref{lma.b1}. 

The remaining statements remain true for the case of finite $b$, and the proofs only require minimal notational changes: $M_\alpha(\lambda)\mapsto M_\alpha(b;\lambda)$ and $X\mapsto X_b$.

\subsection{Other classes of potentials}

Throughout this work, we have assumed that the complex potential $q$ is bounded. This assumption was made primarily in order to keep the size of the paper within reasonable bounds. We expect that most of the construction can be extended to suitable classes of unbounded potentials. In the discrete case this has been achieved in \cite{ELY} for the Jacobi operators in the limit-point class. 

It is an interesting question to specify our analysis to the case of potentials $q$ with a suitable decay at infinity and to relate the spectral pair $(\nu,\psi)$ to the standard objects of scattering theory. We plan to address this question in future work.

\appendix

\section*{Appendix}

For completeness we provide proofs of statements from the Titchmarsh--Weyl theory for the matrix-valued operator $\bH$; these are Propositions~\ref{prp.Mf}, \ref{prop:M_func_asympt}, \ref{thm.sp.decomp} and \ref{prop:resolvent_kernel}. All these statements can be regarded as known. They can be reduced to the theory of Hamiltonian systems (see e.g. \cite{ClarkGHL} for such reduction) developed in great generality by Hinton and Shaw \cite{HS_81}. However, we found that extracting relevant facts from this literature and translating them into the language of spectral theory of operator $\bH$ is not an easy task, and therefore here we indicate the main steps of the proofs. We focus on those steps that are different from the scalar case (i.e. the spectral theory of the scalar self-adjoint Schr\"odinger operator $H$). For the scalar case, the standard reference is \cite{CodLev}.

\setcounter{equation}{0}
\setcounter{section}{1}

\subsection{Identities for $\Phi$ and $\Theta$}
We start by recording two identities for the solutions $\Phi$ and $\Theta$ that will be used throughout the appendix. 

\begin{lemma}
For any $\lambda\in\bbC$ and any $x\geq0$, we have
\begin{align}
\Theta(x,\lambda)\Phi(x,\overline{\lambda})^*&=\Phi(x,\lambda)\Theta(x,\overline{\lambda})^*,
\label{eq.X3}
\\
\Theta'(x,\lambda)\Phi(x,\overline{\lambda})^*&=\Phi'(x,\lambda)\Theta(x,\overline{\lambda})^*+\epsilon.
\label{eq.X4}
\end{align}
\end{lemma}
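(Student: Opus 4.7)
The plan is to repackage the four Wronskian identities \eqref{eq:wronsk_1}, \eqref{eq:wronsk_2} into a single $4\times 4$ symplectic-type identity for a fundamental matrix, and then derive \eqref{eq.X3} and \eqref{eq.X4} as its ``dual'' companion.

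First I would introduce the $4\times 4$ fundamental matrix and the symplectic form
\[
\Omega(x,\lambda):=\begin{pmatrix}\Theta(x,\lambda) & \Phi(x,\lambda)\\ \epsilon\,\Theta'(x,\lambda) & \epsilon\,\Phi'(x,\lambda)\end{pmatrix},
\qquad
\mathcal{J}:=\begin{pmatrix} 0 & -I\\ I & 0\end{pmatrix},
\]
and verify by a block-wise multiplication (using $\epsilon^{*}=\epsilon$) that the four relations \eqref{eq:wronsk_1}, \eqref{eq:wronsk_2} are together equivalent to the single matrix identity
\[
\Omega(x,\overline{\lambda})^{*}\,\mathcal{J}\,\Omega(x,\lambda)=-\mathcal{J},\qquad x\geq 0,\ \lambda\in\bbC.
\]
Indeed, the four $2\times 2$ blocks of the left-hand side turn out to be precisely $-[\Theta(\cdot,\overline{\lambda})^{*},\Theta(\cdot,\lambda)]$, $-[\Theta(\cdot,\overline{\lambda})^{*},\Phi(\cdot,\lambda)]$, $-[\Phi(\cdot,\overline{\lambda})^{*},\Theta(\cdot,\lambda)]$ and $-[\Phi(\cdot,\overline{\lambda})^{*},\Phi(\cdot,\lambda)]$, which by \eqref{eq:wronsk_1}, \eqref{eq:wronsk_2} equal $0$, $I$, $-I$, $0$ respectively --- exactly the blocks of $-\mathcal{J}$.

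Next I would exploit that $\mathcal{J}^{2}=-I$, so $\mathcal{J}^{-1}=-\mathcal{J}$ and the right-hand side $-\mathcal{J}$ in the above identity is invertible; hence $\Omega(x,\lambda)$ itself is invertible for every $x\geq 0$. Taking inverses on both sides, substituting $\mathcal{J}^{-1}=-\mathcal{J}$, and rearranging yields the ``dual'' identity
\[
\Omega(x,\lambda)\,\mathcal{J}\,\Omega(x,\overline{\lambda})^{*}=-\mathcal{J}.
\]
Reading off the $(1,1)$-block of this equality gives $\Phi(x,\lambda)\Theta(x,\overline{\lambda})^{*}-\Theta(x,\lambda)\Phi(x,\overline{\lambda})^{*}=0$, which is \eqref{eq.X3}; reading off the $(2,1)$-block gives $\epsilon\,\Phi'(x,\lambda)\Theta(x,\overline{\lambda})^{*}-\epsilon\,\Theta'(x,\lambda)\Phi(x,\overline{\lambda})^{*}=-I$, and multiplying on the left by $\epsilon$ and using $\epsilon^{2}=I$ rearranges this into \eqref{eq.X4}.

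The main obstacle is essentially bookkeeping: matching the four $2\times 2$ blocks of $\Omega(x,\overline{\lambda})^{*}\mathcal{J}\,\Omega(x,\lambda)$ with the four Wronskians while keeping the signs straight, and correctly tracking the sign flip coming from $\mathcal{J}^{-1}=-\mathcal{J}$ when passing to the dual identity. Beyond that, no new analytic input is required --- the identities \eqref{eq.X3}, \eqref{eq.X4} are an essentially algebraic consequence of the Wronskian relations already listed.
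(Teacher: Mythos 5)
Your proof is correct, and it takes a genuinely different route from the paper's. You treat the four Wronskian relations \eqref{eq:wronsk_1}, \eqref{eq:wronsk_2} at a \emph{fixed} point $x$ as a single symplectic relation $\Omega(x,\overline{\lambda})^{*}\mathcal{J}\,\Omega(x,\lambda)=-\mathcal{J}$ for the $4\times4$ fundamental matrix, and obtain \eqref{eq.X3}, \eqref{eq.X4} as the ``transposed'' relation $\Omega(x,\lambda)\mathcal{J}\,\Omega(x,\overline{\lambda})^{*}=-\mathcal{J}$ by pure matrix inversion (the standard ``a left inverse of a square matrix is also a right inverse'' trick, with the sign bookkeeping via $\mathcal{J}^{-1}=-\mathcal{J}$); I checked the block computations and the signs, including the $(2,1)$-block giving $\epsilon\Phi'(x,\lambda)\Theta(x,\overline{\lambda})^{*}-\epsilon\Theta'(x,\lambda)\Phi(x,\overline{\lambda})^{*}=-I$, and they are all consistent with the paper's conventions. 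The paper instead argues through the differential equation: it fixes $y$, forms $R(x):=\Theta(x,\lambda)\Phi(y,\overline{\lambda})^{*}-\Phi(x,\lambda)\Theta(y,\overline{\lambda})^{*}$, computes its Wronskians against $\Phi(\cdot,\overline{\lambda})^{*}$ and $\Theta(\cdot,\overline{\lambda})^{*}$, and identifies $R$ with the solution $R_{*}$ having Cauchy data $R_{*}(y)=0$, $R_{*}'(y)=\epsilon$, which requires a (small) uniqueness argument for solutions with prescribed Wronskians against a fundamental system. Your version is shorter and purely algebraic once the constancy of the Wronskians is granted, and it yields as a byproduct the invertibility of $\Omega(x,\lambda)$ for every $x\geq0$ and every $\lambda\in\bbC$; the paper's version produces along the way the two-variable kernel identity $\Theta(x,\lambda)\Phi(y,\overline{\lambda})^{*}-\Phi(x,\lambda)\Theta(y,\overline{\lambda})^{*}=R_{*}(x)$ for all $x,y$, which is slightly more information than the diagonal case $x=y$ stated in the lemma (and is exactly the form in which $\bR_2$ enters the resolvent-kernel computation later). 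Either argument is acceptable.
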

\begin{proof}
Fix $y\geq0$ and denote 
\[
R(x):=\Theta(x,\lambda)\Phi(y,\overline{\lambda})^*-\Phi(x,\lambda)\Theta(y,\overline{\lambda})^*.
\]
We need to prove the relations
\begin{equation}
R(y)=0, \quad R'(y)=\epsilon.
\label{e15}
\end{equation}
Clearly, $R$ satisfies the eigenvalue equation~\eqref{eq:init_diff_eq}. 
Using the Wronskian identities~\eqref{eq:wronsk_1} and~\eqref{eq:wronsk_2}, we check that 
\begin{align}
[\Phi(\cdot,\overline{\lambda})^*,R]=\Phi(y,\overline{\lambda})^*,\quad
[\Theta(\cdot,\overline{\lambda})^*,R]=\Theta(y,\overline{\lambda})^*.
\label{e18}
\end{align}
Now let $R_*$ be the solution to the eigenvalue equation~\eqref{eq:init_diff_eq} satisfying the initial conditions \eqref{e15} at $x=y$. We need to check that $R=R_*$. 

Computing the Wronskian at the point $x=y$, we observe that 
\[
[\Phi(\cdot,\overline{\lambda})^*,R_*]=\Phi(y,\overline{\lambda})^*, \quad 
[\Theta(\cdot,\overline{\lambda})^*,R_*]=\Theta(y,\overline{\lambda})^*.
\]
Subtracting from \eqref{e18}, we find
\[
[\Phi(\cdot,\overline{\lambda})^*,R-R_*]=0, \quad [\Theta(\cdot,\overline{\lambda})^*,R-R_*]=0.
\]
From here it is easy to conclude that $R-R_*=0$. 
\end{proof}

\subsection{Existence of $M_\alpha(\lambda)$: proof of Proposition~\ref{prp.Mf}}\label{sec:app1}
As in the scalar case, we start by considering the eigenvalue equation 
\begin{equation}
-\epsilon F''+QF=\lambda F
\label{eq:diff_F}
\end{equation}
on the compact interval $[0,b]$, $b>0$, endowed with the boundary condition 
\eqref{eq:a2a0} at zero and the additional boundary condition 
\[
F(b)=0,
\]
at $x=b$. (One could consider a more general boundary condition at $x=b$, as in \eqref{eq:bc_interval}, but in the limit $b\to\infty$, the dependence on this boundary condition disappears.) 
As in \eqref{eq:def_M_b} with $\beta=\infty$, for $\lambda\in\bbC\setminus\bbR$ we define
\begin{equation}
 M_\alpha(b;\lambda):=\Phi(b,\lambda)^{-1}\Theta(b,\lambda);
\label{eq:def_M_b-Dir}
\end{equation}
the inverse exists by Lemma~\ref{lma.nonsingular}.
The function $M_\alpha(b;\cdot)$ is meromorphic in $\lambda$ with poles on the real axis. 
We will show that $M_\alpha(b;\lambda)$ converges to a limit as $b\to\infty$ for any $\lambda\in\bbC\setminus\bbR$; the $M$-function $M_\alpha(\lambda)$ will be identified with this limit. 

We begin by establishing several auxiliary facts. Let $X_{b}(x,\lambda)$ be the solution defined by \eqref{eq:def_chi_b}. 

\begin{lemma}\label{lem:Im_M_b_id}
 For all $b>0$ and $\lambda\in\bbC\setminus\bbR$, we have
 \begin{equation}
  \frac{\Im M_\alpha(b;\lambda)}{\Im\lambda}=\int_{0}^{b}X_{b}(x,\lambda)^{*}X_{b}(x,\lambda)\dd x.
 \label{eq:Im_M_b_id}
 \end{equation}
\end{lemma}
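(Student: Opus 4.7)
The plan is to prove \eqref{eq:Im_M_b_id} by a standard Lagrange identity (or Green's formula) argument, exploiting the fact that, by the definition \eqref{eq:def_M_b-Dir} of $M_\alpha(b;\lambda)$, the solution $X_{b}(x,\lambda)$ vanishes at $x=b$, so the only boundary contribution comes from $x=0$.

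First, I will define the matrix-valued expression
\[
Y(x):=X_{b}(x,\lambda)^{*}\epsilon X_{b}'(x,\lambda)-X_{b}'(x,\lambda)^{*}\epsilon X_{b}(x,\lambda).
\]
Since $X_{b}$ solves the eigenvalue equation $-\epsilon X_{b}''+QX_{b}=\lambda X_{b}$ and both $\epsilon$ and $Q$ are Hermitian, the adjoint satisfies $(X_{b}'')^{*}\epsilon=X_{b}^{*}Q-\overline{\lambda}X_{b}^{*}$. Substituting these into $Y'(x)=X_{b}^{*}\epsilon X_{b}''-(X_{b}'')^{*}\epsilon X_{b}$, the terms with $Q$ cancel and I get
\[
Y'(x)=(\overline{\lambda}-\lambda)\,X_{b}(x,\lambda)^{*}X_{b}(x,\lambda)=-2\ii\,\Im\lambda\cdot X_{b}(x,\lambda)^{*}X_{b}(x,\lambda).
\]
Integrating from $0$ to $b$ yields $Y(b)-Y(0)=-2\ii\,\Im\lambda\int_{0}^{b}X_{b}^{*}X_{b}\,\dd x$. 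Because $M_\alpha(b;\lambda)=\Phi(b,\lambda)^{-1}\Theta(b,\lambda)$ gives $X_{b}(b,\lambda)=0$, we have $Y(b)=0$, and consequently
\[
\int_{0}^{b}X_{b}(x,\lambda)^{*}X_{b}(x,\lambda)\,\dd x=\frac{Y(0)}{2\ii\,\Im\lambda}.
\]

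The remaining task is to identify $Y(0)$ with $2\ii\,\Im M_\alpha(b;\lambda)$. Writing $M:=M_\alpha(b;\lambda)$ and using the initial data \eqref{eq:phi,theta_bc_cond}, I read off $X_{b}(0,\lambda)=C-SM$ and $\epsilon X_{b}'(0,\lambda)=S+CM$, so
\[
Y(0)=(C-SM)^{*}(S+CM)-(S+CM)^{*}(C-SM).
\]
Expanding the product and using the self-adjointness $S^{*}=S$, $C^{*}=C$ together with the identities \eqref{eq:S-C_id}, namely $SC=CS$ and $S^{2}+C^{2}=I$, the cross terms involving $M^{*}\cdots M$ and the constant terms $CS-SC$ cancel, leaving
\[
Y(0)=(C^{2}+S^{2})M-M^{*}(C^{2}+S^{2})=M-M^{*}=2\ii\,\Im M.
\]
Combining this with the integral formula above proves \eqref{eq:Im_M_b_id}.

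The argument is essentially the matrix version of the classical scalar Weyl identity; the only step requiring care is the boundary calculation of $Y(0)$, where the commutativity and the Pythagorean-type identity in \eqref{eq:S-C_id} are used to collapse the mixed $S,C,M,M^{*}$ terms into the clean expression $M-M^{*}$. I anticipate this to be the only non-mechanical step of the proof.
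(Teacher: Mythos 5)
Your proof is correct and follows essentially the same route as the paper: differentiate the Wronskian $X_{b}^{*}\epsilon X_{b}'-(X_{b}')^{*}\epsilon X_{b}$, use the eigenvalue equation to get $(\overline{\lambda}-\lambda)X_{b}^{*}X_{b}$, kill the boundary term at $x=b$ via the Dirichlet condition $X_{b}(b,\lambda)=0$, and identify the term at $x=0$ with $M-M^{*}$ using \eqref{eq:phi,theta_bc_cond} and \eqref{eq:S-C_id}. The only difference is that you write out explicitly the boundary computation that the paper dismisses as ``a short calculation,'' and your expansion of $(C-SM)^{*}(S+CM)-(S+CM)^{*}(C-SM)$ is accurate.
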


\begin{proof}
By differentiating the Wronskian $[X_{b}(\cdot,\lambda)^{*},X_{b}(\cdot,\lambda)]$ with respect to $x$ and using that $X_{b}(x,\lambda)$ solves~\eqref{eq:diff_F}, we find that 
\[
 [X_{b}(\cdot,\lambda)^{*},X_{b}(\cdot,\lambda)]'=(\overline{\lambda}-\lambda)X_{b}(x,\lambda)^{*}X_{b}(x,\lambda).
\]
By integrating and taking into account that $[X_{b}(b,\lambda)^{*},X_{b}(b,\lambda)]=0$, we obtain
\[
(\overline{\lambda}-\lambda)\int_{0}^{b}X_{b}(x,\lambda)^{*}X_{b}(x,\lambda)\dd x=-[X_{b}(0,\lambda)^{*},X_{b}(0,\lambda)].
\]
A short calculation using the boundary conditions~\eqref{eq:phi,theta_bc_cond} yields
\[
[X_{b}(0,\lambda)^{*},X_{b}(0,\lambda)]=M_\alpha(b;\lambda)-M_\alpha(b;\lambda)^{*},
\]
which completes the proof.
\end{proof}

Using definition~\eqref{eq:def_chi_b}, we can rewrite the right-hand side of~\eqref{eq:Im_M_b_id} as 
\[
\begin{pmatrix}
-M_\alpha(b,\lambda)^{*} & I
\end{pmatrix}
\int_{0}^{b}
\begin{pmatrix}
\Phi(x,\lambda)^{*}\Phi(x,\lambda) & \Phi(x,\lambda)^{*}\Theta(x,\lambda) \\
\Theta(x,\lambda)^{*}\Phi(x,\lambda) & \Theta(x,\lambda)^{*}\Theta(x,\lambda)
\end{pmatrix}\dd x
\begin{pmatrix}
-M_\alpha(b,\lambda) \\ I
\end{pmatrix}.
\]
In particular, the matrix $M=M_\alpha(b;\lambda)$ satisfies the inequality 
\begin{equation}
\begin{pmatrix}
-M^{*} & I
\end{pmatrix}
\int_{0}^{b}
\begin{pmatrix}
\Phi(x,\lambda)^{*}\Phi(x,\lambda) & \Phi(x,\lambda)^{*}\Theta(x,\lambda) \\
\Theta(x,\lambda)^{*}\Phi(x,\lambda) & \Theta(x,\lambda)^{*}\Theta(x,\lambda)
\end{pmatrix}\dd x
\begin{pmatrix}
-M \\ I
\end{pmatrix}\leq\frac{\Im M}{\Im\lambda}.
\label{eq:quadr_ineq_M}
\end{equation}
This is used as a motivation for the definition of the sets
\[
\mathcal{D}(b;\lambda):=\{M\in\bbC^{2,2} \mid M \mbox{ satisfies~\eqref{eq:quadr_ineq_M}}\}
\]
for $b>0$ and $\lambda\in\bbC\setminus\bbR$.
The inequality $\leq$ in~\eqref{eq:quadr_ineq_M} is chosen so that the sets $\mathcal{D}(b;\lambda)$ turn out to be bounded in the end. The sets $\mathcal{D}(b;\lambda)$ are matrix analogues to the Weyl disks from the limit-point/limit-circle analysis of the scalar Sturm--Liouville equations. Of course, $M_\alpha(b;\lambda)\in\mathcal{D}(b;\lambda)$.

\begin{lemma}
For all $b>0$ and $\lambda\in\bbC\setminus\bbR$, sets $\mathcal{D}(b;\lambda)$ are compact and nesting, i.e. $\mathcal{D}(b_2;\lambda)\subset\mathcal{D}(b_1;\lambda)$, if $0<b_1<b_2$.
\end{lemma}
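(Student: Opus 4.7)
The plan is to exploit the fact that the $4 \times 4$ matrix-valued integrand in~\eqref{eq:quadr_ineq_M} factors as $V(x)^* V(x)$ with $V(x) := (\Phi(x,\lambda), \Theta(x,\lambda))$, and is therefore a positive semi-definite matrix at every $x$. Denoting the integral by $\mathcal{H}_b := \int_0^b V^* V\,\dd x$ and setting $w := \begin{pmatrix} -M \\ I \end{pmatrix}$, I would recast~\eqref{eq:quadr_ineq_M} as the $2 \times 2$ Hermitian matrix inequality $w^* \mathcal{H}_b w \leq \Im M/\Im\lambda$.

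For the nesting property the argument is essentially automatic. If $0 < b_1 < b_2$ then $\mathcal{H}_{b_2} - \mathcal{H}_{b_1} = \int_{b_1}^{b_2} V^* V\,\dd x \geq 0$, so $w^* \mathcal{H}_{b_1} w \leq w^* \mathcal{H}_{b_2} w$ as Hermitian $2 \times 2$ matrices. Since the right-hand side of~\eqref{eq:quadr_ineq_M} does not depend on $b$, any $M \in \mathcal{D}(b_2;\lambda)$ lies in $\mathcal{D}(b_1;\lambda)$.

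Compactness reduces to boundedness, since $\mathcal{D}(b;\lambda)$ is clearly closed by the continuity in $M$ of~\eqref{eq:quadr_ineq_M}. For boundedness I would test the inequality against a unit vector $v \in \bbC^2$, which yields
\[
0 \leq \int_0^b \|(\Theta(x,\lambda) - \Phi(x,\lambda) M)v\|_{\bbC^2}^2 \,\dd x \leq \frac{v^* \Im M\, v}{\Im\lambda} \leq \frac{\|M\|}{|\Im\lambda|},
\]
the first inequality coming from positive semi-definiteness of $w^*\mathcal{H}_b w$. Combined with the reverse triangle inequality in $L^2((0,b);\bbC^2)$, this gives
\[
\|\Phi(\cdot,\lambda) M v\|_{L^2(0,b)} \leq \|\Theta(\cdot,\lambda) v\|_{L^2(0,b)} + \sqrt{\|M\|/|\Im\lambda|} \leq C_b + \sqrt{\|M\|/|\Im\lambda|},
\]
where $C_b := \sup_{\|v\|=1} \|\Theta(\cdot,\lambda) v\|_{L^2(0,b)}$ is finite. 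The crucial lower bound $\|\Phi(\cdot,\lambda) M v\|_{L^2(0,b)}^2 = v^* M^* A_b M v \geq c \|Mv\|^2$, where $A_b := \int_0^b \Phi(x,\lambda)^* \Phi(x,\lambda)\,\dd x$ and $c = c(b,\lambda)>0$, then yields $\sqrt{c}\,\|M\| \leq C_b + \sqrt{\|M\|/|\Im\lambda|}$ upon taking the supremum over unit $v$, forcing $\|M\|$ to be bounded.

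The main obstacle is establishing the strict positive definiteness of $A_b$. One must not be confused by the fact that $\Phi(0,\lambda) = S$ can be singular (it vanishes in the Dirichlet case $\alpha = \infty$); the point is that by Lemma~\ref{lma.nonsingular} the matrix $\Phi(x,\lambda)$ is non-singular for every $x > 0$, so for any $v \neq 0$ the continuous function $x \mapsto \|\Phi(x,\lambda)v\|^2$ is strictly positive on $(0,b)$. Consequently $v^* A_b v > 0$ for every nonzero $v$, and since $A_b$ is a Hermitian matrix, this gives $A_b \geq c\,I$ for some $c>0$. Once this is in place, closedness and boundedness together yield compactness of $\mathcal{D}(b;\lambda)$.
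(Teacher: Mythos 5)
Your proof is correct and follows essentially the same route as the paper: nesting from the positive semi-definiteness of the integrand, and boundedness from the strict positive definiteness of $\int_0^b\Phi(x,\lambda)^*\Phi(x,\lambda)\,\dd x$ (justified, as in the paper, by the invertibility of $\Phi(x,\lambda)$ for $x>0$) combined with a quadratic-versus-lower-order growth comparison. The paper phrases the latter directly as a matrix inequality $M^*A_{11}M\leq A_{21}M+M^*A_{12}-A_{22}+\Im M/\Im\lambda$ whose left side grows quadratically in $\norm{M}$ while the right side grows linearly, whereas you repackage it via the reverse triangle inequality in $L^2((0,b);\bbC^2)$; this is only a cosmetic difference.
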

\begin{proof}
It is clear that $\mathcal{D}(b;\lambda)$ is closed; let us prove that it is bounded. Denote 
\begin{equation}
\int_{0}^{b}
\begin{pmatrix}
\Phi(x,\lambda)^{*}\Phi(x,\lambda) & \Phi(x,\lambda)^{*}\Theta(x,\lambda) \\
\Theta(x,\lambda)^{*}\Phi(x,\lambda) & \Theta(x,\lambda)^{*}\Theta(x,\lambda)
\end{pmatrix}\dd x
=
\begin{pmatrix}
A_{11}&A_{12} \\
A_{21}&A_{22}
\end{pmatrix}\, ;
\label{eq:*1}
\end{equation}
then \eqref{eq:quadr_ineq_M} can be written as 
\begin{equation}
M^*A_{11}M\leq A_{21}M+M^*A_{12}-A_{22}+\Im M/\Im\lambda. 
\label{eq:*2}
\end{equation}
Notice that $A_{11}$ is positive definite; the strict definiteness is a~consequence of invertibility of matrix $\Phi(x,\lambda)$. Now suppose, to get a contradiction, that there exists a sequence $\{M_n\}$ of matrices satisfying \eqref{eq:*2} with $M=M_n$ and $\norm{M_n}\to\infty$. Then the norm of the left-hand side of \eqref{eq:*2} is at least $c\norm{M_n}^2$, where $c$ is the smallest eigenvalue of $A_{11}$, while the norm of the right-hand side grows linearly in $\norm{M_n}$; this is a contradiction. Thus,  $\mathcal{D}(b;\lambda)$ is bounded. 

Since the matrix in the integral in \eqref{eq:*1} is positive semi-definite, the left-hand side of~\eqref{eq:quadr_ineq_M} is non-decreasing in $b$, which implies the nesting property of $\mathcal{D}(b;\lambda)$.
\end{proof}

\begin{proof}[Proof of Proposition~\ref{prp.Mf}]
Fix a non-real $\lambda$; let us first prove the existence of a~matrix $M_\alpha(\lambda)$ with the property $X(\cdot,\lambda)\in L^{2}(\bbR_{+};\bbC^{2,2})$, see \eqref{eq:X}. Take any sequence $b_{n}\to\infty$ and consider the corresponding sequence of matrices $\{M_\alpha(b_{n};\lambda)\}$. By the nesting property of the previous lemma, all these matrices belong to the bounded set $\calD(b_1;\lambda)$. Thus, by a compactness argument, passing to a subsequence, we may assume that our sequence of matrices converges. Let us denote the limiting matrix by $M_\alpha(\lambda)$ and let the solution $X(x,\lambda)$ be as in \eqref{eq:X}. For any $b>0$ and any $b_n>b$ we have 
\[
 \int_{0}^{b}X_{b_n}(x,\lambda)^{*}X_{b_n}(x,\lambda)\dd x\leq  \int_{0}^{b_{n}}X_{b_n}(x,\lambda)^{*}X_{b_n}(x,\lambda)\dd x=\frac{\Im M_\alpha(b_n;\lambda)}{\Im\lambda}
\]
by Lemma~\ref{lem:Im_M_b_id}. Sending $n\to\infty$, we find that 
\begin{equation}
 \int_{0}^{b}X(x,\lambda)^{*}X(x,\lambda)\dd x\leq\frac{\Im M_\alpha(\lambda)}{\Im\lambda}.
\label{eq:X2a}
\end{equation}
Since this holds true for any $b>0$, we see that $X(\cdot,\lambda)\in L^2(\bbR_{+};\bbC^{2,2})$, as required. 

As already mentioned in Section~\ref{sec.b2}, the uniqueness of $M_\alpha(\lambda)$ follows easily from the dimension lemma (Lemma~\ref{lem:dim_S}). 

From the above construction it is clear that $M_\alpha(\lambda)$ is the (locally uniform) limit of the sequence of meromorphic functions $M_\alpha(b_n;\lambda)$. Thus, $M_\alpha(\lambda)$ is holomorphic in the upper and lower half-planes. It follows from~\eqref{eq:X2a} that for $\Im\lambda>0$ we have $\Im M_\alpha(\lambda)\geq0$. The latter inequality is actually strict because the columns of $X(x,\lambda)$ are linearly independent.

It remains to check the identity $M_\alpha(\overline{\lambda})=M_\alpha(\lambda)^*$. 
By the definition \eqref{eq:def_M_b-Dir} of $M_\alpha(b;\overline{\lambda})$, relation \eqref{eq.X3} can be rewritten as
\[
M_\alpha(b;\overline{\lambda})=M_\alpha(b;\lambda)^*.
\]
Passing to the limit $b\to\infty$, we obtain $M_\alpha(\overline{\lambda})=M_\alpha(\lambda)^*$. 
\end{proof}

\begin{remark}\label{rem:lim_point}
The first part of the proof shows that for any $\lambda\in\bbC\setminus\bbR$, the set $\mathcal{D}(b;\lambda)$ shrinks to a point as $b\to\infty$ (justifying the terminology \emph{limit-point case}).
\end{remark}

\subsection{Asymptotic formula for $M_\alpha(\lambda)$: proof of Proposition~\ref{prop:M_func_asympt}}
\label{sec.masympt}
We start with a simple application of the resolvent identity. Below $M_0$ is the $M$-function corresponding to $\alpha=0$ and $M_0^{(0)}$ is the $M$-function corresponding to $\alpha=0$ and $Q=0$. We denote 
\[
\norm{Q}_{L^\infty}=\esssup_{x>0}\norm{Q(x)},
\]
where $\norm{Q(x)}$ is the matrix norm of the $2\times2$ matrix $Q(x)$. 

\begin{lemma}
For any $\lambda\in\bbC\setminus\bbR$, the estimate
\begin{equation}
\norm{M_0(\lambda)-M_0^{(0)}(\lambda)}
\leq
\frac{\norm{Q}_{L^\infty}}{\abs{\Im\lambda}}\Big|\Im\Tr M_0(\lambda)\Big|^{1/2}\left|\Im\Tr M_0^{(0)}(\lambda)\right|^{1/2}
\label{eq:x1}
\end{equation}
holds true. 
\end{lemma}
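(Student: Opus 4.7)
\textbf{Proof plan for inequality \eqref{eq:x1}.}
The strategy is to express the difference $M_0(\lambda)-M_0^{(0)}(\lambda)$ as an integral of the form $\int_0^\infty V^* Q X\,dx$, where $X$ and $X^{(0)}$ are the distinguished square integrable solutions for $\bH$ and for $\bH^{(0)}$ (the operator with $Q=0$), and then to estimate this integral by Cauchy--Schwarz, invoking the matrix Weyl identity of Lemma~\ref{lem:Im_M_b_id} to control the $L^2$ norms of $X$ and $X^{(0)}$. To avoid handling boundary terms at infinity, I will first work on the compact interval $[0,b]$ with the Dirichlet boundary condition at $x=b$ and pass to the limit at the end.

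The key computation is a Green-type identity. For $\lambda\in\bbC\setminus\bbR$, set $V(x):=X_b^{(0)}(x,\overline{\lambda})^*$ and consider the matrix Wronskian
\[
W(x):=V(x)\,\epsilon X_b'(x,\lambda)-V'(x)\,\epsilon X_b(x,\lambda).
\]
Differentiating and substituting $\epsilon X_b''=(Q-\lambda)X_b$ and $V''\epsilon=-\lambda V$ (the latter obtained from the adjoint of the free equation at $\overline{\lambda}$), the $\lambda$-terms cancel and one obtains
\[
W'(x)=X_b^{(0)}(x,\overline{\lambda})^*\,Q(x)\,X_b(x,\lambda).
\]
The Dirichlet condition at $b$ gives $X_b(b,\lambda)=0$ and $X_b^{(0)}(b,\overline{\lambda})=0$, so $W(b)=0$. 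At $x=0$, using the Neumann-type data $X_b(0,\lambda)=-M_0(b;\lambda)$, $X_b'(0,\lambda)=\epsilon$ and the analogous identities for $X_b^{(0)}$ (with $\overline{\lambda}$), a short calculation employing the relation $M_0^{(0)}(b;\overline{\lambda})^*=M_0^{(0)}(b;\lambda)$ yields $W(0)=M_0(b;\lambda)-M_0^{(0)}(b;\lambda)$. Integrating from $0$ to $b$ therefore gives the representation
\[
M_0(b;\lambda)-M_0^{(0)}(b;\lambda)=-\int_0^b X_b^{(0)}(x,\overline{\lambda})^*\,Q(x)\,X_b(x,\lambda)\,\dd x.
\]

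To conclude, I estimate the operator norm of the right-hand side. Bounding the operator norm by the Hilbert--Schmidt norm and using $\norm{Q(x)}\leq\norm{Q}_{L^\infty}$ pointwise, the Cauchy--Schwarz inequality yields
\[
\norm{M_0(b;\lambda)-M_0^{(0)}(b;\lambda)}
\leq \norm{Q}_{L^\infty}\left(\int_0^b\norm{X_b^{(0)}(x,\overline{\lambda})}_{HS}^2\dd x\right)^{1/2}\!\!\left(\int_0^b\norm{X_b(x,\lambda)}_{HS}^2\dd x\right)^{1/2}.
\]
By taking the trace of the matrix identity in Lemma~\ref{lem:Im_M_b_id}, each integral equals $\Im\Tr M_0(b;\lambda)/\Im\lambda$, respectively $\Im\Tr M_0^{(0)}(b;\overline{\lambda})/\Im\overline{\lambda}$, and the symmetry $M_0^{(0)}(b;\overline{\lambda})=M_0^{(0)}(b;\lambda)^*$ turns the latter into the same quantity with $\lambda$ in place of $\overline{\lambda}$. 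This gives \eqref{eq:x1} with $M_0(b;\lambda)$ and $M_0^{(0)}(b;\lambda)$ in place of $M_0(\lambda)$ and $M_0^{(0)}(\lambda)$. Finally I let $b\to\infty$, using Remark~\ref{rem:lim_point} (equivalently, the convergence $M_0(b;\lambda)\to M_0(\lambda)$ established in the proof of Proposition~\ref{prp.Mf}, applied to both operators) to pass to the limit on both sides.

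The only delicate point in the argument is the cancellation of the $\lambda$-terms in the computation of $W'$, which depends crucially on the self-adjointness of $\epsilon$ and on the fact that $V$ solves the equation at $\overline{\lambda}$ rather than $\lambda$; performing this calculation on a compact interval sidesteps any concerns about boundary behaviour at infinity and reduces the problem to the clean limit step just described.
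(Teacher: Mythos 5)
Your argument is correct, and it reaches the estimate by a genuinely different route from the paper. The paper works directly on the half-line with operator-theoretic tools: it identifies $M_0(\lambda)$ with the resolvent kernel $\bR(0,0;\lambda)$ of $\bH$ with the Neumann condition, applies the second resolvent identity to get $M_0(\lambda)-M_0^{(0)}(\lambda)=-\int_0^\infty \bR^{(0)}(0,x;\lambda)Q(x)\bR(x,0;\lambda)\,\dd x$, and then controls $\int\norm{\bR(x,0;\lambda)}^2\dd x$ via the first resolvent identity $\Im(\bH-\lambda)^{-1}=(\Im\lambda)(\bH-\overline{\lambda})^{-1}(\bH-\lambda)^{-1}$ and a trace computation. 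You obtain exactly the same integral representation (note that with $\alpha=0$ one has $\bR(x,0;\lambda)=-X(x,\lambda)$ and $\bR^{(0)}(0,x;\lambda)=-X^{(0)}(x,\overline{\lambda})^*$) and the same $L^2$ bounds, but derive both at the ODE level: the representation from a Lagrange/Green identity for the Wronskian $W(x)$ on $[0,b]$, and the $L^2$ bounds from Lemma~\ref{lem:Im_M_b_id}, followed by the limit $b\to\infty$ justified by Remark~\ref{rem:lim_point}. Your computations check out — in particular the cancellation $W'=VQX_b$, the boundary evaluation $W(0)=M_0(b;\lambda)-M_0^{(0)}(b;\lambda)$ using $\Phi(0,\lambda)=I$, $\Phi'(0,\lambda)=0$, $\Theta'(0,\lambda)=\epsilon$ for $\alpha=0$, and the conversion $\Im\Tr M_0^{(0)}(b;\overline{\lambda})/\Im\overline{\lambda}=\Im\Tr M_0^{(0)}(b;\lambda)/\Im\lambda$. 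What the paper's route buys is brevity and no finite-interval detour, at the price of invoking the explicit resolvent kernel formula (Proposition~\ref{prop:resolvent_kernel}); what your route buys is independence from that formula and from operator identities, everything being reduced to Wronskian algebra and the already-established Lemma~\ref{lem:Im_M_b_id}, at the price of a limiting argument.
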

\begin{proof}
For the rest of the proof, we write $M(\lambda)$ in place of $M_0(\lambda)$ and $M^{(0)}(\lambda)$ in place of $M_0^{(0)}(\lambda)$ for readability. We need some notation. Let $\bH$ be the operator \eqref{a0} with $\alpha=0$, and let $\bH^{(0)}$ be the operator corresponding to $\alpha=0$ and $Q=0$. We denote by $\bR$ the resolvent kernel of $\bH$ (see \eqref{e13}) and by $\bR^{(0)}$ the resolvent kernel of $\bH^{(0)}$. The proof uses two standard resolvent identities, 
\begin{align}
(\bH-\lambda)^{-1}-(\bH^{(0)}-\lambda)^{-1}&=-(\bH^{(0)}-\lambda)^{-1}Q(\bH-\lambda)^{-1},
\label{eq:x2}
\\
\Im(\bH-\lambda)^{-1}&=(\Im \lambda)(\bH-\overline{\lambda})^{-1}(\bH-\lambda)^{-1}.
\label{eq:x3}
\end{align}
Writing \eqref{eq:x2} in terms of resolvent kernels and using that $M(\lambda)=\bR(0,0;\lambda)$, we find 
\[
M(\lambda)-M^{(0)}(\lambda)=-\int_{-\infty}^\infty \bR^{(0)}(0,x;\lambda)Q(x)\bR(x,0;\lambda)\dd x\, .
\]
By Cauchy--Schwarz this yields
\[
\norm{M(\lambda)-M^{(0)}(\lambda)}
\leq
\norm{Q}_{L^\infty}
\biggl(\int_{-\infty}^\infty \norm{\bR^{(0)}(0,x;\lambda)}^2\dd x\biggr)^{1/2}
\biggl(\int_{-\infty}^\infty \norm{\bR(x,0;\lambda)}^2\dd x\biggr)^{1/2}. 
\]
For any $2\times2$ matrix $X$, we have $\norm{X}^2\leq\Tr X^*X$. Using this and the resolvent identity \eqref{eq:x3}, for the second integral in the last estimate we find
\begin{align*}
\int_{-\infty}^\infty &\norm{\bR(x,0;\lambda)}^2\dd x
\leq
\int_{-\infty}^\infty \Tr\bigl(\bR(x,0;\lambda)^*\bR(x,0;\lambda)\bigr)\dd x
\\
&=\int_{-\infty}^\infty \Tr\bigl(\bR(0,x;\overline{\lambda})\bR(x,0;\lambda)\bigr)\dd x
=\frac1{\Im\lambda}\Im\Tr\bR(0,0;\lambda)
\\
&=\frac1{\Im\lambda}\Im\Tr M(\lambda), 
\end{align*}
and in the same way 
\[
\int_{-\infty}^\infty\norm{\bR^{(0)}(0,x;\lambda)}^2\dd x
\leq 
\frac1{\Im\lambda}\Im\Tr M^{(0)}(\lambda).
\]
Putting this together, we obtain \eqref{eq:x1}. 
\end{proof}

\begin{proof}[Proof of Proposition \ref{prop:M_func_asympt}]
First consider the case $\alpha=0$. Recall that for the $M$-function in the free case $Q=0$ we have the exact expression \eqref{eq:m-func_free_neum}. Putting this together with \eqref{eq:x1}, by a simple bootstrapping argument we obtain 
\[
M_0(\lambda)=\frac{1}{k}\left(\ii P_{+}-P_{-}\right)+\mathcal{O}(k^{-3}),
\]
as required. Next, let $\alpha$ be finite and non-zero. From~\eqref{eq:X13}, using the matrix identity $(\epsilon A)^{2}=|\alpha|^{2}I$,  we easily deduce the expansion 
\[
 M_{\alpha}(\lambda)
 =\epsilon A+(1+\abs{\alpha}^2)M_{0}(\lambda)+(1+\abs{\alpha}^2)M_0(\lambda)\epsilon A M_0(\lambda) +\mathcal{O}(k^{-3}). 
\]
Substituting the above expansion for $M_0(\lambda)$, we arrive at~\eqref{eq.Masymp1}. Finally, from 
\[
M_{\infty}(\lambda)=-\epsilon M_{0}(\lambda)^{-1}\epsilon
\]
we obtain the asymptotic formula~\eqref{eq.Masymp2} for the case $\alpha=\infty$.
\end{proof}

\subsection{The resolvent kernel of $\bH$: proof of Proposition~\ref{prop:resolvent_kernel}}
\label{sec.reskernel}
We first prepare two identities. Using the definition of $X(x,\lambda)$ and the identity $M_\alpha(\overline{\lambda})=M_\alpha(\lambda)^*$, from \eqref{eq.X3}, \eqref{eq.X4} we obtain 
\begin{align}
X(x,\lambda)\Phi(x,\overline{\lambda})^*&=\Phi(x,\lambda)X(x,\overline{\lambda})^*,
\label{eq.X5}
\\
X'(x,\lambda)\Phi(x,\overline{\lambda})^*&=\Phi'(x,\lambda)X(x,\overline{\lambda})^*+\epsilon.
\label{eq.X6}
\end{align}
Next, let $\bR$ denote the right-hand side of \eqref{e13}; we need to check that $\bR$ coincides with the resolvent kernel of $\bH$. Let $F\in L^2(\bbR_+;\bbC^2)$ be a~function compactly supported in $\bbR_+$ (in particular, $F$ is vanishing near the origin), and define
\begin{align*}
G(x)&:=\int_0^\infty \bR(x,y;\lambda)F(y)\dd y
\\
&=
-X(x,\lambda)\int_0^x \Phi(y,\overline{\lambda})^*F(y)\dd y
-\Phi(x,\lambda)\int_x^\infty X(y,\overline{\lambda})^*F(y)\dd y
\end{align*}
for $x\geq0$.
Using \eqref{eq.X5} and \eqref{eq.X6},  we compute for $x>0$:
\begin{align*}
G'(x)&=
-X'(x,\lambda)\int_0^x \Phi(y,\overline{\lambda})^*F(y)\dd y
-\Phi'(x,\lambda)\int_x^\infty X(y,\overline{\lambda})^*F(y)\dd y,
\\
G''(x)&=
-X''(x,\lambda)\int_0^x \Phi(y,\overline{\lambda})^*F(y)\dd y
-\Phi''(x,\lambda)\int_x^\infty X(y,\overline{\lambda})^*F(y)\dd y
-\epsilon F(x).
\end{align*}
Using the differential equation for $X$ and $\Phi$, we find that $G$ satisfies
\[
-\epsilon G''(x)+Q(x)G(x)-\lambda G(x)=F(x).
\]
It is easy to see directly that $G\in L^2(\bbR_+;\bbC^2)$ and that $G$ satisfies the boundary conditions for $\bH$ at $x=0$. It follows that $G\in \Dom \bH$ and 
\[
(\bH-\lambda)G=F. 
\]
This shows that the integral operator with the kernel $\bR(x,y;\lambda)$ is indeed the resolvent of $\bH$. 
The proof of Proposition~\ref{prop:resolvent_kernel} is complete. \qed

\subsection{The diagonalisation of $\bH$: proof of Proposition~\ref{thm.sp.decomp}}
\label{sec:app3}

We prove Proposition~\ref{thm.sp.decomp} in three steps. Below $C_0^\infty(\bbR_+;\bbC^2)$ is the space of smooth compactly supported functions on $\bbR_+$ with values in $\bbC^2$. 

\emph{Step 1: $U$ is an isometry.}
As in the proof of of Lemma~\ref{lma.e2}, we write the resolvent kernel of $\bH$ for $z\in\bbC\setminus\bbR$ in the form
\[
\bR(x,y;z)=\bR_1(x,y;z)+\bR_2(x,y;z),
\]
where $\bR_1$ and $\bR_2$ are defined by~\eqref{eq:def_R1} and \eqref{eq:def_R2}, respectively. Fix $F\in C_0^\infty(\bbR_+;\bbC^2)$. Let $(a,b)\subset \bbR$ be a bounded interval such that $\Sigma(\{a\})=\Sigma(\{b\})=0$. Consider the limit
\[
I(F):=\frac1\pi\lim_{\eps\to0+} \Im \int_a^b \jap{(\bH-\lambda-\ii\eps)^{-1}F,F}\dd\lambda. 
\]
It is easy to see that in the limit $\eps\to0_+$ the kernel $\bR_2$ is Hermitian, and therefore its contribution to $I(F)$ vanishes.  Consider the contribution of $\bR_1$. 
Let 
\[
(UF)(z):=\int_0^\infty \Phi(x,\overline{z})^{*}F(x)\dd x
\]
for $z=\lambda+\ii\eps$ with $\lambda\in\bbR$.
Observe that
\[
\lim_{\eps\to0+}(UF)(\lambda+\ii\eps)=(UF)(\lambda)
\]
uniformly on compacts. With this notation, we have
\begin{equation}
I(F)=
\frac1\pi\lim_{\eps\to0+} \int_a^b \jap{(\Im M_\alpha(\lambda+\ii\eps))(UF)(\lambda+\ii\eps),(UF)(\lambda+\ii\eps)}\dd\lambda. 
\label{X3}
\end{equation}
Furthermore, from the Herglotz--Nevanlinna integral representation~\eqref{eq:intres} for $M_\alpha$ it is easy to see that 
\[
\sup_{0<\eps<1}\int_a^b\norm{\Im M_\alpha(\lambda+\ii\eps)}\dd\lambda<\infty.
\]
Thus, by the dominated convergence, we can replace $(UF)(\lambda+\ii\eps)$ by $(UF)(\lambda)$ in \eqref{X3}. Finally, using the Stieltjes inversion formula for $M_\alpha$, we find
\[
I(F)=
\int_a^b \jap{\dd\Sigma(\lambda)UF(\lambda),UF(\lambda)}.
\]
(Here we have used that $\Sigma(\{a\})=\Sigma(\{b\})=0$.)
On the other hand, coming back to the definition of $I(F)$, by the spectral theorem for self-adjoint operators,
\[
I(F)=\jap{\chi_{(a,b)}(\bH)F,F}.
\]
Putting this together, we get
\[
\jap{\chi_{(a,b)}(\bH)F,F}
=
\int_a^b \jap{\dd\Sigma(\lambda)UF(\lambda),UF(\lambda)}.
\]
Sending $a\to-\infty$ and $b\to\infty$, we obtain
\[
\norm{F}^2=\int_{-\infty}^\infty\jap{\dd\Sigma(\lambda)UF(\lambda),UF(\lambda)},
\]
with the absolute convergence of the integral in the right-hand side. Thus, $U$ is an isometry of $L^2(\bbR_+;\bbC^2)$ onto a~subspace of $L_{\Sigma}^{2}(\bbR;\bbC^{2})$.

\emph{Step 2: $U$ satisfies the intertwining relation \eqref{eq:intertwine}.}
Let $D_0$ be the space of functions $F\in C^\infty(\bbR_+;\bbC^2)$ that vanish for all sufficiently large $x$ and satisfy the boundary condition~\eqref{eq:a2a}. It is not difficult to check that $D_0$ is dense in $\Dom \bH$ in the graph norm of $\bH$.
For any $F\in D_0$ and $\lambda\in\bbR$, integrating by parts, we find
\begin{align*}
U{\bH}F(\lambda)
&=
\int_0^\infty \Phi(x,\lambda)^*(\bH F)(x)\dd x
=
\int_0^\infty (\bH \Phi(x,\lambda))^* F(x)\dd x
\\
&=
\lambda \int_0^\infty \Phi(x,\lambda)^* F(x)\dd x
=
\lambda UF(\lambda);
\end{align*}
here the boundary terms vanish because both $F$ and $\Phi$ satisfy the boundary condition~\eqref{eq:a2a}. Thus, we see that $U$ maps $D_0$ into the domain of $\Lambda$ and 
\[
U\bH F=\Lambda UF
\]
for all $F\in D_0$. Since $D_0$ is dense in $\Dom \bH$ in the graph norm of $\bH$, we obtain that $U\Dom \bH\subset\Dom \Lambda$ and $U\bH=\Lambda U$ holds on $\Dom \bH$, as required. 

\emph{Step 3: the range of $U$ is the whole space $L_{\Sigma}^{2}(\bbR;\bbC^{2})$.}
Observe first that by the intertwining relation \eqref{eq:intertwine}, the range $\Ran U$ and its orthogonal complement $(\Ran U)^\perp$ are invariant subspaces for $\Lambda$. Suppose, to get a contradiction, that there is a non-zero element $G\in L_{\Sigma}^{2}(\bbR;\bbC^{2})$ such that $G\perp \Ran U$. Then, for any bounded function $\xi$,  we also have $\xi(\Lambda)G\perp\Ran U$. It follows that we can find a bounded interval $(a,b)\subset\bbR$ such that $G_0:=\chi_{(a,b)}(\Lambda)G$ is non-zero and $G_0\perp\Ran U$.

Let us first consider the case when the boundary parameter $\alpha\in\bbC$. For $\eps>0$, we make use of the elements $F_{\eps,\uparrow}$ and $F_{\eps,\downarrow}$ of $L^2(\bbR_+;\bbC^2)$ defined by 
\[
F_{\eps,\uparrow}
:=
\frac1\eps
\begin{pmatrix}
\chi_{(0,\eps)}\\ 0
\end{pmatrix},
\quad
F_{\eps,\downarrow}
:=
\frac1\eps
\begin{pmatrix}
0\\\chi_{(0,\eps)}
\end{pmatrix}.
\]
Consider $UF_{\eps,\uparrow}$ and $UF_{\eps,\downarrow}$. By the boundary condition \eqref{eq:phi,theta_bc_cond} and the continuity of $\Phi(x,\lambda)$ in $x$, we find that 
\begin{align*}
\lim_{\eps\to0+}UF_{\eps,\uparrow}(\lambda)&=\frac{1}{\sqrt{1+|\alpha|^{2}}} e_{\uparrow}\, , \quad \mbox{ where }\, e_{\uparrow}:=\begin{pmatrix}1\\0\end{pmatrix},\\
\quad
\lim_{\eps\to0+}UF_{\eps,\downarrow}(\lambda)&=\frac{1}{\sqrt{1+|\alpha|^{2}}} e_{\downarrow}\, , \quad \mbox{ where }\, e_{\downarrow}:=\begin{pmatrix}0\\1\end{pmatrix},
\end{align*}
uniformly over $\lambda\in(a,b)$. Since $G_0\perp \Ran U$, we find that 
\[
G_0\perp \chi_{(a,b)}e_{\uparrow}
\quad \text{ and }\quad
G_0\perp \chi_{(a,b)}e_{\downarrow}.
\]
It follows that 
\[
G_0\perp \xi(\Lambda)\chi_{(a,b)}e_{\uparrow}
\quad \text{ and }\quad
G_0\perp \xi(\Lambda)\chi_{(a,b)}e_{\downarrow}
\]
for any bounded Borel function $\xi$. It follows that $G_0=0$, which is a contradiction. 

Finally, if $\alpha=\infty$, we take 
\[
F_{\eps,\uparrow}
:=
-\frac2{\eps^2}
\begin{pmatrix}
\chi_{(0,\eps)}\\ 0
\end{pmatrix},
\quad
F_{\eps,\downarrow}
:=
-\frac2{\eps^2}
\begin{pmatrix}
0\\\chi_{(0,\eps)}
\end{pmatrix},
\]
and then in a similar way
\[
UF_{\eps,\uparrow}(\lambda)\to e_{\uparrow},
\quad
UF_{\eps,\downarrow}(\lambda)\to e_{\downarrow}
\]
as $\eps\to0+$.
The proof of Proposition~\ref{thm.sp.decomp} is complete.
\qed

\begin{remark*}
The diagonalisation theorems of this kind are usually proved by restricting the operator to a finite interval $(0,b)$ and taking $b\to\infty$; this method goes back to Weyl and Titchmarsh. 
The proof presented here is very close to the ``method of directing functionals'' introduced by M.~G.~Krein in \cite{Krein}. This method is explained in detail in \cite[Appendix II]{AG}. See also a more modern source \cite{FHL}. Our proof of completeness (Step 3) seems slightly more straightforward than the proofs we have seen in this literature. 
\end{remark*}

\end{document}